\newcommand{\thmref}[1]{Theorem~\ref{#1}}
\newcommand{\lemref}[1]{Lemma~\ref{#1}}
\newcommand{\rmkref}[1]{Remark~\ref{#1}}
\newenvironment{acknowledgements}{\bigskip\textbf{Acknowledgements.}}{}
\numberwithin{equation}{section}
\theoremstyle{plain}
\newtheorem{theorem}{Theorem}[section]
\newtheorem{lemma}[theorem]{Lemma}
\newtheorem{corollary}[theorem]{Corollary}
\newtheorem{proposition}[theorem]{Proposition}
\theoremstyle{definition}
\newtheorem{definition}[theorem]{Definition}
\newtheorem{remark}[theorem]{Remark}
\newcommand{\Q}{{\mathbb Q}}
\newcommand{\Z}{{\mathbb Z}}
\newcommand{\F}{{\mathbb F}}
\newcommand{\lb}{\left(}
\newcommand{\rb}{\right)}
\newcommand{\ve}{\varepsilon}
\newcommand{\modd}{(\textnormal{mod }d)}
\newcommand{\be}{\begin{equation}}
	\newcommand{\ee}{\end{equation}}
\begin{document}
	\title[Coprimality of Fourier coefficients]{Coprimality of Fourier coefficients
		of %two distinct 
		eigenforms}
	
	\author[S. Ganguly et. al.]{Satadal Ganguly, Arvind Kumar, Moni Kumari}
	\address{Theoretical Statistics and Mathematics Unit, Indian Statistical Institute, 203 Barrackpore
		Trunk Road, Kolkata-700108, India.}
	\email{sgisical@gmail.com}
	\address{Einstein Institute of Mathematics, the Hebrew University of Jerusalem, Edmund
		Safra Campus, Jerusalem 91904, Israel.}
	\email{arvind.kumar@mail.huji.ac.il}
	\address{Department of Mathematics, Bar-Ilan University, Ramat Gan 52900, Israel.}
	\email{moni.kumari@biu.ac.il}

%	\date{\today}
	
	\subjclass[2020]{Primary: 11F30; Secondary: 11F80, 11N37, 11N64, 11N99}
	\keywords{Fourier coefficients of cusp forms, coprimality of  integer sequences, Chebotarev Density Theorem}
		
	%\author{}
	\begin{abstract}
		Given a pair of distinct non-CM normalized eigenforms %$f_1$ and $f_2$ 
		having 
		integer Fourier coefficients $a_1 (n)$ and $a_2(n)$,
		we count positive integers $n$ with 
		$(a_1(n), a_2(n))=1$ and make a conjecture about the density of the set of primes $p$ for which $(a_1(p), a_2(p))=1$. We also study the average order of the number of 
		prime divisors of $(a_1(p), a_2(p))$. 
	\end{abstract}
	\maketitle
	\section{Introduction}
	\subsection{Motivation and the first result}
	Given two integer-valued sequences $a_1(n)$ and $a_2(n)$, an interesting question is, how the sequence of the $gcd$'s $(a_1(n), a_2(n))$ behaves; and in particular, how often $a_1(n)$ and $a_2(n)$ are coprime. 
	For example, if $a_1(n)=n$ and $a_2(n)=\phi(n)$, the Euler's $\phi$-function, then the density of such integers is zero. This follows from
	the  beautiful result of Erd\H{o}s \cite{erd} given below:
	\be \label{Erdos}
	\left|\{n \leq x: (n, \phi(n))=1\}\right|=\lb 1+o(1)\rb \frac{e^{-\gamma}x}{L_3 (x)},
	\ee
	where $\gamma$ is the Euler constant and $L_3(x)=\log \log \log x$. 
	In analogy with this result, V. Kumar Murty \cite{km} has shown that if $f$ is a normalized eigenform with integer Fourier coefficients $a_f(n)$, then
	\begin{equation}\label{vkmurty}
		\left|\{n \leq x: (n, a_f(n))=1\}\right|=O\lb \frac{x}{L_3(x)}\rb,
	\end{equation}
	where the implied constant depends on $f$.
	
	For integers $k \ge 2$ and $N\ge 1$, let $S_k(N)$ denote the space of holomorphic cusp forms (with trivial nebentypus)
	of weight $k$ and  level $N$.
	Suppose  $f_1 \in S_{k_1}(N_1)$ and 
	$f_2 \in S_{k_2}(N_2)$ are two distinct non-CM normalized eigenforms
	with rational integer Fourier coefficients $a_1(n)$ and $a_2 (n)$, respectively. Here, by ``eigenform" we mean eigenfunction of \underline{all} the Hecke operators (see
	\cite[\S 5.8]{DS}) and ``normalized" means $a_1(1)=a_2(1)=1$. We further assume that the two forms are not character twists of each other.
	The above will be our standing assumptions throughout the article.
	Now we ask, for what proportion of integers,
	$a_1(n)$ and $a_2 (n)$ are coprime?   The corollary to the theorem below gives a non-trivial answer.
	\begin{theorem}\label{main}
		Let $f_1$ and $f_2$ as above. Then we have,
		\begin{itemize}
			\item[(a)]
			\begin{equation}\label{gcd_n}
				|\{n\leq x: (n, (a_1(n), a_2(n)))=1\}| \ll \frac{x}{L_3(x)}.
			\end{equation}
			
			\item [(b)]
			For any positive integer $d >1$, we have,
			\begin{equation}\label{gcd_d}
				|\{n\leq x: (d, (a_1(n), a_2(n)))=1\}| \ll \frac{x L_3(x)}{L_2(x)},
			\end{equation}
			where $L_2(x) =\log \log x$.
		\end{itemize}
		Here the implied constant depends only on the two forms $f_1$ and $f_2$ in part {\rm (a)}, and only on $f_1, f_2$ and  $d$ in part {\rm (b)}.
	\end{theorem}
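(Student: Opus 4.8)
The plan is to adapt V. Kumar Murty's proof of \eqref{vkmurty} to the pair $(f_1,f_2)$; the one genuinely new ingredient is a reduction that decouples the two forms, so that only the mod-$p$ behaviour of each individual form enters.

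\textbf{Reduction via Hecke multiplicativity.} Each $a_i$ is multiplicative, and $a_i(n)=a_i(q)\,a_i(n/q)$ whenever a prime $q$ exactly divides $n$ (written $q\|n$). Hence the following is a sufficient condition for $\bigl(n,(a_1(n),a_2(n))\bigr)>1$: there exist (not necessarily distinct) primes $q_1\|n$, $q_2\|n$ and a prime $p\mid n$ with $p\notin\{q_1,q_2\}$, $p\mid a_1(q_1)$ and $p\mid a_2(q_2)$. Put $S_i(p)=\{q\text{ prime}:p\mid a_i(q)\}$. Then the set in \eqref{gcd_n} is contained in the set $\mathcal B$ of those $n\le x$ such that for every prime $p\mid n$, either $n$ has no exact prime factor $\ne p$ in $S_1(p)$, or $n$ has no exact prime factor $\ne p$ in $S_2(p)$. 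Running the same argument with $p$ confined to the finitely many prime divisors of $d$ shows the set in \eqref{gcd_d} is contained in $A_1(p^{\ast})\cup A_2(p^{\ast})$ for any chosen prime $p^{\ast}\mid d$, where $A_i(p)=\{n\le x:n\text{ has no exact prime factor }\ne p\text{ in }S_i(p)\}$.

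\textbf{Density of $S_i(p)$, and a sieve estimate.} For $p$ outside a finite set $E=E(f_1,f_2)$, the image of the mod-$p$ representation $\bar\rho_{i,p}$ attached to $f_i$ contains $SL_2(\F_p)$ (Ribet--Momose, using that $f_i$ is non-CM), so a count of trace-zero matrices together with the Chebotarev density theorem gives that $S_i(p)$ has natural density $\delta_i(p)=\tfrac1p\bigl(1+o(1)\bigr)$; in particular $\delta_i(p)\ge c/p$ for $p\notin E$ with an absolute $c>0$, and (since $f_i$ has even weight, so the relevant cyclotomic character has even order) $\delta_i(p)>0$ for every $p$. The sieve fact we need is that for any set $T$ of primes of positive density,
\[
\#\{m\le M:\ v_q(m)\ne1\ \forall q\in T\}\ \ll\ M(\log M)^{-\mathrm{dens}(T)},
\]
with an extra factor $(\log p_0)^{-1}$ on the right if one additionally requires $P^-(m)>p_0$ (here $P^-$ is the least prime factor); both follow from an upper-bound sieve after splitting off the square-full part of $m$ supported on $T$.

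\textbf{The two counts.} Part (b) now follows at once: $\#A_i(p^{\ast})\ll_d x(\log x)^{-\delta_i(p^{\ast})}$ with $\delta_i(p^{\ast})>0$, which is $\ll_d x\,L_3(x)/L_2(x)$ for $x$ large, giving \eqref{gcd_d}. For part (a), set $y=L_2(x)$. The $n\le x$ with $P^-(n)>y$ number $\ll x/\log y=x/L_3(x)$ by Mertens. If $n\in\mathcal B$ and $P^-(n)=p_0\le y$, write $n=p_0^{a}m$ with $P^-(m)>p_0$; membership in $\mathcal B$, taken with $p=p_0$, forces $m$ to have no exact prime factor in $S_1(p_0)$ or none in $S_2(p_0)$, so the sieve estimate yields
\[
\#\{n\in\mathcal B:\,P^-(n)=p_0\}\ \ll\ \sum_{a\ge1}\frac{x/p_0^{\,a}}{\log p_0}\,(\log x)^{-\min(\delta_1(p_0),\delta_2(p_0))}\ \ll\ \frac{x}{p_0\log p_0}\,(\log x)^{-c/p_0}
\]
for $p_0\notin E$. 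Summing over primes $p_0\le y$: the range $p_0\le\sqrt y$ contributes $\ll x\,e^{-c\sqrt{L_2(x)}}\sum_{p_0}\tfrac1{p_0\log p_0}=o(x/L_3(x))$, and the range $\sqrt y<p_0\le y$ contributes $\ll x\sum_{\sqrt y<p_0\le y}\tfrac1{p_0\log p_0}\ll x/\log y=x/L_3(x)$ (using that $\sum_p 1/(p\log p)$ converges, with tail past $T$ of size $\asymp1/\log T$); the finitely many $p_0\in E$ give $\ll x(\log x)^{-c'}$. Hence $\#\mathcal B\ll x/L_3(x)$, which is \eqref{gcd_n}.

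\textbf{The main obstacle.} The crux is that one must use the decoupled reduction: it produces conditions on $S_1(p)$ \emph{or} $S_2(p)$, each of density $\asymp1/p$, whereas the obvious reduction would involve $S_1(p)\cap S_2(p)$, of density only $\asymp1/p^{2}$, and with that the sum over $p_0\le L_2(x)$ is not $o(x)$. It is likewise essential to keep the factor $1/\log p_0$ arising from $P^-(m)>p_0$; dropping it inflates the bound to $\asymp x(\log\log\log\log x)/L_3(x)$. The Chebotarev/large-image input is standard given that $f_i$ is non-CM, so it should not be the bottleneck; verifying $\delta_i(p)>0$ for the remaining finitely many $p$, and controlling the negligible contributions of non-squarefree $n$, of $E$, and of the sparse set of primes $q$ with $a_i(q)=0$ (which is harmlessly absorbed into each $S_i(p)$), are routine.
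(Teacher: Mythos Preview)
Your approach is essentially correct and genuinely different from the paper's. The paper works entirely with the \emph{joint} density: it defines $v(\ell,n)=|\{p^\alpha\|n:\ell\mid(a_1(p^\alpha),a_2(p^\alpha))\}|$, observes that $(n,(a_1(n),a_2(n)))=1$ forces $v(\ell,n)=0$ for every prime $\ell\mid n$, and then bounds $|\{n\le x:v(\ell,n)=0\}|$ by a second-moment (variance) argument, using an effective Chebotarev estimate for the product representation $\bar\rho_{f_1,\ell}\times\bar\rho_{f_2,\ell}$ to get $\sum_{n\le x}v(\ell,n)\sim\delta(\ell)xL_2(x)$ and similarly for $v^2$, with $\delta(\ell)\sim 1/\ell^2$. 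Summing over $\ell\le y=L_2(x)^\eta$ (for small $\eta$) gives part~(a); part~(b) uses only a single $\ell\mid d$.

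Your route instead \emph{decouples} the two forms and replaces the second-moment step by an upper-bound sieve on sets $S_i(p)$ of density $\asymp 1/p$. This buys two things: you never need the large-image theorem for the product representation (Loeffler), only the single-form result (Ribet); and for part~(b) you obtain $|A_i(p^\ast)|\ll_d x(\log x)^{-\delta_i(p^\ast)}$, which is power-of-$\log$ saving, strictly stronger than the paper's $xL_3(x)/L_2(x)$. The paper's method, on the other hand, is technically cleaner: the variance inequality is elementary, and the uniformity in $\ell$ is transparent.

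Two remarks on your write-up. First, your ``main obstacle'' paragraph overstates matters: the ``obvious'' joint reduction (no exact prime factor in $S_1(p)\cap S_2(p)$) is precisely what the paper uses, and it \emph{does} yield $x/L_3(x)$---via the second-moment method rather than a sieve, and in fact even your sieve would succeed with the joint density once the $1/\log p_0$ factor from $P^-(m)>p_0$ is retained. So decoupling is a valid and advantageous alternative, not a necessity. Second, the claimed sieve bound $\ll \tfrac{M}{\log p_0}(\log M)^{-\delta_i(p_0)}$ uniformly for $p_0\le L_2(x)$ is not quite routine: one must check that the effective Chebotarev input gives $\sum_{q\in S_i(p_0),\,q\le z}1/q\ge \delta_i(p_0)L_2(z)-O(1)$ with the $O(1)$ uniform in $p_0$. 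This does hold---the point is that the ``lost'' range below $Q_0=\exp(cp_0^{O(1)})$ costs $\delta_i(p_0)\cdot\log\log Q_0\asymp(\log p_0)/p_0=O(1)$---but it deserves a line of justification rather than being folded into ``routine''.
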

	From 
	part (b) of  Thm. \ref{main} the following is immediate (by choosing $d=2$, for example).
	\begin{corollary}\label{coprime_n}
		Let $f_1$ and $f_2$ be as in Thm. \ref{main}. Then  we have
		\be\label{eqno}
			|\{n\leq x: (a_1(n), a_2(n))=1\}| \ll \frac{x L_3(x)}{L_2(x)},
		\ee
		where  the implied constant depends only on the two forms $f_1$ and $f_2$. In particular, the density of the set of integers $n$ such that $a_1(n)$ and $a_2(n)$ are coprime is zero.
	\end{corollary}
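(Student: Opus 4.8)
The plan is to obtain this as an immediate specialization of part~(b) of Theorem~\ref{main}. The one thing to observe is that if $a_1(n)$ and $a_2(n)$ are coprime, then their gcd equals $1$, and hence it is coprime to every integer $d>1$; in particular $\bigl(2,(a_1(n),a_2(n))\bigr)=1$. Therefore
\[
\{n\le x:\ (a_1(n),a_2(n))=1\}\ \subseteq\ \bigl\{n\le x:\ \bigl(2,(a_1(n),a_2(n))\bigr)=1\bigr\},
\]
so the cardinality on the left of \eqref{eqno} is at most that on the right, and applying \eqref{gcd_d} with the fixed choice $d=2$ yields \eqref{eqno}. Since $d=2$ is now an absolute constant, the implied constant depends only on $f_1$ and $f_2$, as claimed.

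For the statement about density, I would note that $L_3(x)=\log L_2(x)$ grows far more slowly than $L_2(x)$, so that $L_3(x)/L_2(x)\to 0$ and hence the right-hand side of \eqref{eqno} is $o(x)$. Dividing by $x$ and letting $x\to\infty$ then shows that the natural density of $\{n:\ (a_1(n),a_2(n))=1\}$ is zero.

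I do not expect any genuine obstacle here: all of the arithmetic and analytic work is contained in Theorem~\ref{main}(b), and the corollary needs only the trivial set inclusion above together with the elementary comparison $\log\log\log x = o(\log\log x)$. If one wished to avoid even invoking the inclusion, one could instead rerun the proof of \eqref{gcd_d} verbatim with $d$ replaced by $2$; but the inclusion argument is cleaner and makes transparent why any single fixed prime (indeed, the prime $2$) already suffices to force the density to vanish.
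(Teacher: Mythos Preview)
Your argument is correct and matches the paper's own derivation exactly: the paper states that the corollary is immediate from part~(b) of Theorem~\ref{main} by choosing $d=2$, which is precisely the inclusion you wrote down. The density statement follows from $L_3(x)/L_2(x)\to 0$ as you note.
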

	\begin{remark}
	 A definition prevalent in the literature (see, e.g.,  \cite[Chap.~6]{Iw}) is that a form $f$ is a Hecke eigenform or
a Hecke form if it satisfies the condition that it is a  common eigenfunction of the Hecke operators $T_n$ with $(n, N_f ) = 1$, where $N_f$ is
the level of the form $f$. This weaker condition does not, in general, imply the stronger condition that $f$ is a common eiegenfunction of all the Hecke operators though these two are equivalent in the case the form is primitive (i.e., a newform). In the above theorem, the condition that the two forms are common eigenfunctions of the Hecke
operators $T_n$ for all positive integers $n>1$ is essential since we need  multiplicativity of  the coefficients of the forms
in the proof (see \S 6). Since multiplicativity of the coefficients is not required in the proof of the other results stated below, the results remain true even if one 
takes the forms to be Hecke eigenforms in the sense described above. 
\end{remark}

\begin{remark}\label{asymp-problem}
A question that arises naturally is whether we can find  asymptotic formulae for the sums in \eqref{gcd_n}, \eqref{gcd_d}, and \eqref{eqno}, or for the simpler
sum in \eqref{vkmurty}. Obtaining asymptotics or even good lower bounds for these sums seems to be a very difficult task and any progress on this question, even under reasonable conjectures, will be very interesting. In this connection, one should recall that for the sum in \eqref{vkmurty}, the analogous question for  primes 
is a well-known unsolved problem. Indeed, for a 
non-CM eigenform $f\in S_k(N)$, it is expected that primes $p$ for which $(p, a_f(p))>1$ is extremely rare 
and a probabilistic model suggests that the number of
such primes up to $x$ should be of the order $O(\log \log x)$ (see \cite[\S 3]{gou}).  
\end{remark}
	\subsection{Restriction to primes: a probabilistic heuristic}
	Let $f_1$ and $f_2$ be as before.  Another interesting question is how frequently
	$a_1(p)$ and $a_2 (p)$ are coprime as $p$ varies over the primes. In other words, we are interested in the order of growth of the function
	\[
	C(x; f_1, f_2)=|\{p\leq x: (a_1(p), a_2(p))=1\}| .
	\]
	As there are eigenforms, e.g., the Ramanujan Delta function,
	whose all but finitely many Fourier coefficients at primes are even,  we 
	need to assume further that there are infinitely many primes $p$ such that one of $a_1(p)$
	and $a_2 (p)$ is odd. Under this condition, we  describe a probabilistic heuristic to guess the answer.  One of our crucial intermediate results is the asymptotic formula  \eqref{evl} which implies that for any fixed integer $m>1$, 
	\be\label{int}
	\pi_{f_1,f_2}(x,m) \sim \delta(m){\pi(x)}, \textnormal{  as  }x\rightarrow \infty.
	\ee 
	Here $	\pi_{f_1,f_2}(x,m)$ denotes the number of primes $p$ up to $x$ such that $m$ divides both $a_1(p)$ and $ a_2(p)$; and $\delta=\delta_{f_1, f_2}$ is an arithmetical function 
	determined by the two forms $f_1$ and $f_2$ which we have defined and studied in some depth in \S \ref{S_algebraic}. 	In view of the  asymptotic relation \eqref{int},  $\delta(m)$ can be interpreted as the ``probability" (in the sense of density) that a ``random" prime $p$ has the property that $m$ divides both $a_1(p)$ and $a_2(p)$. 	
 Therefore, assuming that the conditions of divisibility by different primes do not influence each other
	(i.e., the ``events" are independent), it seem 
	reasonable to conjecture that the function $\delta$ should be multiplicative. Indeed, we have shown that $\delta$ is multiplicative on the set of integers that are supported on primes that are sufficiently large (see Prop. \ref{delta_multiplicative_large_prime}) for forms of general level and we have been able to establish multiplicativity over the entire set of natural numbers when the forms have level one (see Prop. \ref{delta_multiplicicative}).
	Moreover, the above assumption of independence suggests that the ``probability" that for a ``random" prime $p$, $a_1(p)$ and $a_2(p)$ are coprime should be given by the infinite product $\alpha=\alpha_{f_1, f_2}$ defined by
	\be\label{alpha}
	\alpha :=\prod_{\ell \  \rm{prime}} (1-\delta(\ell)).
	\ee
 Since we know that the sum $\sum_{\ell}\delta(\ell)$ converges (see Prop. \ref{asymptotic_delta})
	and that $0<\delta(\ell)<1$ for all primes $\ell$ (see \S \ref{S_algebraic}), 	it is clear that the above infinite product converges to some real number in the interval $(0, 1)$. The above discussion leads us to  the following.\\
	\noindent
	\textbf{Conjecture}: Under the assumptions stated above, we have the asymptotic relation
	\be \label{conj}
	C(x; f_1, f_2)= |\{p\leq x: (a_1(p), a_2(p))=1\}| \sim \alpha \pi (x) \ \ \textnormal{as} \  \     x\rightarrow \infty,
	\ee
	where $\pi(x)$ denotes, as usual, the number of primes up to $x$. 

		Proving this conjecture, even under GRH (the Generalized Riemann Hypothesis) for  all Artin $L$-functions, seems to be out of the reach of the current knowledge. The difficulty lies in the fact that the error term in  \eqref{evl} grows too rapidly in terms of $m$ and after expressing the coprimality condition using the M\"{o}bius function, the error term becomes unmanageable. See also Remark \ref{difficulty}. However, it 
		is still possible to give an upper bound of the expected order of magnitude under  GRH as stated below. 

\begin{theorem}\label{sieve}
Under the above assumptions on two forms and under GRH, one has the upper bound
\be\label{ub}
C(x; f_1, f_2)\leq (\alpha'+o(1))\pi (x),
\ee
where 
\[
 \alpha'= \sum_{n=1}^\infty \mu(n)\delta(n).
\]
\end{theorem}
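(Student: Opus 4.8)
The plan is to run the Eratosthenes--Legendre sieve truncated over the small primes, and to bound the resulting terms by the effective, GRH-conditional form of the Chebotarev density theorem contained in \eqref{evl}. Concretely: fixing a parameter $z=z(x)\to\infty$ to be specified at the end and writing $P(z)=\prod_{\ell\le z}\ell$, we note that $(a_1(p),a_2(p))=1$ forces $\bigl((a_1(p),a_2(p)),P(z)\bigr)=1$, so M\"obius inversion over the divisors of $P(z)$ gives
\[
C(x;f_1,f_2)\ \le\ \sum_{p\le x}\ \sum_{\substack{d\mid P(z)\\ d\mid a_1(p),\ d\mid a_2(p)}}\mu(d)\ =\ \sum_{d\mid P(z)}\mu(d)\,\pi_{f_1,f_2}(x,d),
\]
a sum with $2^{\pi(z)}$ terms, which will be kept small by choosing $z$ small.

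I would then insert \eqref{evl}, which under GRH takes the shape $\pi_{f_1,f_2}(x,d)=\delta(d)\,\pi(x)+E(x,d)$ with $E(x,d)\ll_{f_1,f_2}d^{A}\sqrt{x}\,\log(dx)$ for some absolute constant $A$ --- this being the Lagarias--Odlyzko/Serre effective Chebotarev estimate applied to the number field cut out by the mod-$d$ Galois representations attached to $f_1$ and $f_2$, whose degree and the logarithm of whose discriminant are bounded by fixed powers of $d$ times a constant depending on the levels (see \S\ref{S_algebraic}). Since $|\mu(d)|\le 1$ and $d\le P(z)$ throughout the range of summation,
\[
C(x;f_1,f_2)\ \le\ \pi(x)\sum_{d\mid P(z)}\mu(d)\,\delta(d)\ +\ O\!\Bigl(\sqrt{x}\,\log x\sum_{d\mid P(z)}d^{A}\Bigr).
\]
By the prime number theorem $\sum_{d\mid P(z)}d^{A}=\prod_{\ell\le z}(1+\ell^{A})\le\exp\bigl(O_A(z)\bigr)$, so taking $z=c\log x$ with a sufficiently small $c=c(A,f_1,f_2)>0$ makes the error term $O\bigl(x^{1/2+O_A(c)}\log x\bigr)=o(x/\log x)=o(\pi(x))$.

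It then remains to let $z\to\infty$ in the main term. Every $d\mid P(z)$ is squarefree with largest prime factor at most $z$, and $\mu$ is supported on the squarefrees, so
\[
\Bigl|\,\alpha'-\sum_{d\mid P(z)}\mu(d)\,\delta(d)\,\Bigr|\ =\ \Bigl|\sum_{\substack{n\ \text{squarefree}\\ P^{+}(n)>z}}\mu(n)\,\delta(n)\,\Bigr|\ \le\ \sum_{\substack{n\ \text{squarefree}\\ P^{+}(n)>z}}\delta(n),
\]
with $P^{+}(n)$ the largest prime factor of $n$. The series $\sum_{n\ \text{squarefree}}\delta(n)$ converges: by Prop.~\ref{delta_multiplicative_large_prime} the function $\delta$ is multiplicative on integers supported on sufficiently large primes, and $\sum_{\ell}\delta(\ell)<\infty$ by Prop.~\ref{asymptotic_delta}, so up to a finite factor the series is dominated by $\prod_{\ell}(1+\delta(\ell))<\infty$; hence its tail tends to $0$ as $z\to\infty$. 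Substituting $z=z(x)\to\infty$ back into the displays yields $C(x;f_1,f_2)\le(\alpha'+o(1))\pi(x)$, as asserted.

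The only genuine input here is the GRH-conditional bound on $E(x,d)$, and the whole argument turns on that error being at most polynomial in $d$: this is exactly what permits $z$ to tend to infinity (though only as slowly as $\log x$), and letting $z\to\infty$ is precisely what converts the truncated sum into the full alternating series $\alpha'$. The same feature is the main obstacle to doing better --- replacing $\alpha'$ by the conjectural constant $\alpha=\prod_{\ell}(1-\delta(\ell))$ of \eqref{conj}, or extracting a matching lower bound, would require genuine cancellation among the $E(x,d)$ over a far longer range of $d$, or an input strictly stronger than GRH; compare Remark~\ref{difficulty}.
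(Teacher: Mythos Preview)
Your argument follows essentially the same route as the paper's: Legendre sieve over primes $\le z$, apply the GRH Chebotarev bound \eqref{evl} termwise, control the accumulated error via $\prod_{\ell\le z}(1+\ell^A)\le\exp(O(z))$, take $z=c\log x$, and show the truncated main term tends to $\alpha'$.

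One step is under-justified. You assert that $\sum_{n\ \text{squarefree}}\delta(n)$ is dominated ``up to a finite factor'' by $\prod_\ell(1+\delta(\ell))$, but Prop.~\ref{delta_multiplicative_large_prime} and Prop.~\ref{asymptotic_delta} alone do not control $\delta(n)$ when $n$ has a prime factor $\le M$: multiplicativity on integers supported on primes $>M$ says nothing about $\delta(n_1 n_2)$ with $n_1$ $M$-smooth. The paper supplies the missing ingredient, namely the monotonicity $\delta(ab)\le\delta(b)$ for all $a,b\ge 1$, which it derives from the density interpretation $\delta(m)=\lim_{x\to\infty}\pi_{f_1,f_2}(x,m)/\pi(x)$ together with the trivial inequality $\pi_{f_1,f_2}(x,ab)\le\pi_{f_1,f_2}(x,b)$. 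Writing each squarefree $n$ as $n_1 n_2$ with $n_1\mid\prod_{\ell\le M}\ell$ and $P^-(n_2)>M$ then gives $\delta(n)\le\delta(n_2)=\prod_{\ell\mid n_2}\delta(\ell)\le 3^{\omega(n_2)}/n_2^2$, and the ``finite factor'' is $2^{\pi(M)}$ coming from the sum over $n_1$. With this monotonicity inserted, your proof is complete and coincides with the paper's (which also extracts the explicit rate $O(y^{-1}(\log y)^2)$ for the tail, though only $o(1)$ is needed).
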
	
	Note that $\alpha'=\alpha$ if $\delta$ is multiplicative on the full set of positive integers, and hence, in particular, when the two forms are of level one.
	\begin{remark}\label{quasi}
	We remark that the full strength of GRH is not essential to prove the above theorem. Indeed, an analysis of the proof shows that a quasi-GRH, 
	which refers to the assertion that no Artin $L$-function has a 
	zero in the region $\Re(s) >1-\delta $  for some fixed $\delta$ with $0<\delta\leq 1/2$, is sufficient for our purpose. However, one should note that
 the exponent of 
	$x$ in the error term in \eqref{effective_cdt} will now depend on $\delta$ (see Remark \ref{quasi-Chebo}), and therefore the error terms in part (b) of both Prop. \ref{asymptotic_pi} and Prop. \ref{asymptotic_pi*} will change accordingly. 
	\end{remark}
	
	\subsection{Numerical verification of the conjecture}
	We have tested the conjecture numerically using SAGE \cite{sage}
	for every pair of  eigenforms $f_i$ and $f_j$ $(1\le i,j\le 3, i \neq j)$, where
	\begin{align*}
		&f_1(z)= q - 4q^{2} - 15q^{3} - 16q^{4} - 19q^{5} + 60q^{6} %+ 10q^{7} + 192q^{8} - 18q^{9} 
		+ \cdots\in S_6(11),\\
		&f_2(z)= q - 5q^{2} - 7q^{3} + 17q^{4} - 7q^{5} + 35q^{6}% - 13q^{7} - 45q^{8} + 22q^{9} 
		+ \cdots \in S_4(13),\\
		&f_3(z)= q + 10q^2 - 73q^3 - 28q^4 - 295q^5 - 730q^6 
		+ \cdots 
		\in S_8(13).
	\end{align*}
	Let us denote  the $n$th Fourier coefficient of $f_i$ by  $a_i(n)$, for $1\le i\le 3$. For a large integer $x$, the ratio $R(x; f_i, f_j):=|\{p\leq x: (a_i(p), a_j(p))=1\}|/\pi(x)$ is an approximation of the density of the set of primes for which $a_i(p)$ and $a_j(p)$ are coprime and  the data, for different pairs $(f_i, f_j)$, are presented in the second column of the table below, where we have taken $x=$100000.
	Note that for a large prime $\ell$, $\delta(\ell)$ is close to zero and so only the first few terms in the infinite product $\alpha=\prod_{\ell}(1-\delta(\ell))$ should make a significant 
	contribution as the other factors are very close to $1$. Thus, we can approximate  $\alpha$ by taking the product of primes $\ell$ up to $L$, for a suitably large integer $L$.
	Due to constraints on our computational resources we took $L=100$.  Furthermore, since $\delta$ is hard 
	to compute on the exceptional primes (see \S \ref{S_algebraic}) and finding the exceptional primes is a difficult task in itself, we have
	also approximated  $\delta(\ell)$, for primes $\ell$ up to $L$, by the ratio $\delta (y, \ell):= \pi_{f_i,f_j}(y,\ell)/ {\pi(y)}$ for a large integer $y$. Thus $\alpha_{L, y}(f_i, f_j):=\prod_{\ell \leq L}(1-\delta_y(\ell))$ should be a crude approximation of the constant $\alpha_{f_i, f_j}$ and the third column gives this approximated value for different pairs $(f_i, f_j)$, where we have taken $y=$50000. 
	The closeness of the data in columns 2 and 3 of the table inspires some confidence in
	the truth of this conjecture but further numerical investigations will be welcome. 
	\begin{center}
		Table I: Numerical examples in support of Conjecture \ref{conj} \\
		$(x; y; L)=(10^5; 5. 10^4; 100)$\\
		
		\begin{tabular}{|p{4.0cm}|p{4.0cm}|p{4.0cm}|}
			\hline
			Pair of  forms $(f_i,f_j)$	&  {$R(x; f_i, f_j)$}& $ \alpha_{L, y}(f_i, f_j)$ \\
			\hline\hline
			$(f_1,f_2)$
			& 0.40763
			& 0.40757 \\ 
			\hline 	
			{$(f_1,f_3)$} & 0.42212 
			& 0.42414  \\ 
			\hline
			{$(f_2,f_3)$} 
			&  0.13178 & 0.13265 \\
			\hline
		\end{tabular}
	\end{center}
	%\vspace{10pt}

	\subsection{Number of prime divisors of $(a_1(p), a_2(p))$}
	We have studied some related questions
	about the sequence $\{(a_1(p), a_2(p)): p \textnormal{ prime}\}$ which are interesting in their own right. The first one concerns
	the average order of $\omega ((a_1(p), a_2(p)))$, where $\omega(n)$ denotes
	the number of distinct prime divisors of $n$.
	
	If $\alpha(n)$ is an arithmetic function, then it is natural to study the growth of the function $\omega(\alpha(n)).$  For instance, it was shown by Murty and Murty \cite[Thm. 6.2]{mm} that
	both $\omega(\phi(n))$ and $\omega(\sigma(n))$,  where $\sigma(n)$ is the sum of divisors of $n$, have normal order $\frac 1 2 (\log \log n)^2$, and  if $a_f(n)$'s are the Fourier coefficients of a normalized eigenform $f$, then under GRH, Murty and Murty \cite{mm} proved that $\omega(a_f(p))$ has normal order $\log \log p$, where $p$ runs over the set of primes.  In a subsequent paper \cite{mm2}, they proved an analogue of the Erd\H{o}s-Kac Theorem for $\omega(a_f(p))$ as $p$ runs
	over primes. Inspired by these results we have studied  the sequence $\omega((a_1(p),a_2(p)))$, as $p$ varies over the set of primes. 
	Before stating our main result we first make a few comments to put matters into perspective. We shall use the notation $\sum^{'}$ to denote sums over primes $p$ with $a_1(p)a_2(p) \neq0$.
	First of all, 	we recall a classical bound which follows easily from PNT but can be proved independently (see \cite[\S 5]{Ramanujan}):
	\begin{equation} \label{on}
		\omega{(n)}\ll \frac{\log n}{\log \log n},
	\end{equation} 
	where the implied constant is absolute;
	and using this we easily obtain the unconditional bound
	\begin{equation*}
		\sideset{}{'}\sum\limits_{p\le x} \omega((a_1(p),a_2(p)))\ll \frac{x}{\log \log x},
	\end{equation*}
	where the implied constant depends only on the two forms. 
	By the work of Murty and Murty (see \cite[Thm. 3.1]{mm}), we  have, under GRH,
	\begin{equation*}
		\sideset{}{'}\sum\limits_{p\le x} \omega((a_1(p),a_2(p)))\ll \sideset{}{'}\sum\limits_{p\le x} \omega(a_1(p))\ll \frac{x \log \log x}{\log x}.
	\end{equation*}
	We now state our main result.
	
	\begin{theorem}\label{omega} With the same assumptions and  notation as above and under GRH, we have the bounds

		\begin{equation*}
			\sideset{}{'}\sum\limits_{p\le x} \omega((a_1(p),a_2(p))), \ \ \sideset{}{'}\sum\limits_{p\le x} \omega^2((a_1(p),a_2(p)))\ll \frac{x}{\log x},
		\end{equation*}
		where the implied constant depends only on the two forms. 
	\end{theorem}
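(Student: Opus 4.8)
The plan is to write $\omega$ as a sum over its prime divisors and to cut the range of summation at a threshold $z=x^{\eta}$, where $\eta>0$ is a small constant, depending only on $f_1,f_2$, to be fixed at the end. For $n\ge 1$ let $\omega_{\le z}(n)$ and $\omega_{>z}(n)$ be the number of distinct prime factors of $n$ that are $\le z$, respectively $>z$, so that $\omega(n)=\omega_{\le z}(n)+\omega_{>z}(n)$; throughout, $\omega$, $\omega_{\le z}$, $\omega_{>z}$ written with no argument are understood to be evaluated at $(a_1(p),a_2(p))$, and all sums over $p$ are restricted to $a_1(p)a_2(p)\neq0$. The two ranges will be treated by entirely different tools: the large primes via Deligne's bound $|a_i(p)|\le 2p^{(k_i-1)/2}$, which forces $\omega_{>z}$ to be bounded, and the small primes via the GRH-conditional effective form of \eqref{int} provided by Prop.~\ref{asymptotic_pi}(b), which itself rests on the effective Chebotarev estimate \eqref{effective_cdt}.

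\emph{Large primes.} The distinct primes $\ell>z$ dividing $a_1(p)$ are pairwise coprime, so their product divides $a_1(p)$, whence $z^{\,\omega_{>z}(a_1(p))}\le|a_1(p)|\le 2x^{(k_1-1)/2}$ and therefore $\omega_{>z}\le\omega_{>z}(a_1(p))\le\frac{\log 2}{\eta\log x}+\frac{k_1-1}{2\eta}\ll_{\eta,k_1}1$. Summing over $p\le x$ gives $\sideset{}{'}\sum_{p\le x}\omega_{>z}\ll\pi(x)\ll x/\log x$, and since $\omega_{>z}$ is bounded this also bounds $\sideset{}{'}\sum_{p\le x}\omega_{>z}^2$ as well as, in the expansion $\omega^2=\omega_{\le z}^2+2\omega_{\le z}\omega_{>z}+\omega_{>z}^2$, the cross term, which is $\ll\sideset{}{'}\sum_{p\le x}\omega_{\le z}$. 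Thus everything reduces to showing $\sideset{}{'}\sum_{p\le x}\omega_{\le z}\ll x/\log x$ and $\sideset{}{'}\sum_{p\le x}\omega_{\le z}^2\ll x/\log x$.

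\emph{Small primes.} Here I would interchange the order of summation. Writing $\pi'_{f_1,f_2}(x,m)$ for the number of primes $p\le x$ with $a_1(p)a_2(p)\neq0$ and $m\mid(a_1(p),a_2(p))$, one has $\sideset{}{'}\sum_{p\le x}\omega_{\le z}=\sum_{\ell\le z}\pi'_{f_1,f_2}(x,\ell)$, and, expanding the square of $\omega_{\le z}=\sum_{\ell\le z,\ \ell\mid(a_1(p),a_2(p))}1$ and separating the diagonal, $\sideset{}{'}\sum_{p\le x}\omega_{\le z}^2=\sum_{\ell\le z}\pi'_{f_1,f_2}(x,\ell)+\sum_{\ell_1\neq\ell_2\le z}\pi'_{f_1,f_2}(x,\ell_1\ell_2)$. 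Since $\pi'_{f_1,f_2}(x,m)\le\pi_{f_1,f_2}(x,m)$, it suffices to bound $\sum_{\ell\le z}\pi_{f_1,f_2}(x,\ell)$ and $\sum_{\ell_1\neq\ell_2\le z}\pi_{f_1,f_2}(x,\ell_1\ell_2)$. By Prop.~\ref{asymptotic_pi}(b) (whose proof, via \eqref{effective_cdt}, applies equally to squarefree $m$), for squarefree $m$ supported on primes $\le z$ one has $\pi_{f_1,f_2}(x,m)=\delta(m)\pi(x)+O\big(m^{c}x^{1/2}\log(mx)\big)$ for some $c=c(f_1,f_2)$ — the only feature used is that the $m$-dependence is polynomial. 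Summing the error terms contributes $\ll z^{c+1}x^{1/2}\log x$ and $\ll z^{2(c+1)}x^{1/2}\log x$ respectively, both $\ll x/\log x$ once we fix any $\eta<\frac{1}{4(c+1)}$. For the main terms, $\sum_\ell\delta(\ell)<\infty$ by Prop.~\ref{asymptotic_delta}, so $\pi(x)\sum_{\ell\le z}\delta(\ell)\ll x/\log x$; and for the pair sum I would combine the multiplicativity of $\delta$ on integers supported on primes above some bound $\ell_0$ (Prop.~\ref{delta_multiplicative_large_prime}) with the trivial monotonicity $\delta(\ell_1\ell_2)\le\delta(\ell_2)$ — which follows from \eqref{int}, since $\ell_1\ell_2\mid(a_1(p),a_2(p))$ implies $\ell_2\mid(a_1(p),a_2(p))$ — to obtain $\sum_{\ell_1\neq\ell_2}\delta(\ell_1\ell_2)\le\big(\sum_\ell\delta(\ell)\big)^2+2\pi(\ell_0)\sum_\ell\delta(\ell)<\infty$, hence $\pi(x)\sum_{\ell_1\neq\ell_2\le z}\delta(\ell_1\ell_2)\ll x/\log x$. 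Combining all contributions gives both bounds.

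The step I expect to be the main obstacle is the calibration of $\eta$. The large-prime estimate genuinely needs $z$ to be a fixed positive power of $x$: with a sub-polynomial cutoff such as $z=\exp(\sqrt{\log x})$ one would only get $\sideset{}{'}\sum_{p\le x}\omega_{>z}\ll x/\sqrt{\log x}$, which is too weak; this is essentially why GRH is needed, since one must be able to sum effective-Chebotarev errors over \emph{all} primes $\ell\le z$, and over all pairs with $\ell_1\ell_2\le z^2$, while keeping the total $\ll x/\log x$. The two demands are compatible for some fixed $\eta=\eta(f_1,f_2)$ exactly because the $m$-dependence in \eqref{effective_cdt} is only polynomial; a secondary point to watch is the convergence of $\sum_{\ell_1\neq\ell_2}\delta(\ell_1\ell_2)$, which is precisely where the large-prime multiplicativity of $\delta$ (Prop.~\ref{delta_multiplicative_large_prime}) is used.
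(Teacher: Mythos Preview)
Your proof is correct and follows essentially the same approach as the paper: split at a threshold $z=x^{\eta}$, bound $\omega_{>z}$ uniformly via Deligne's bound, and handle the small-prime contributions by interchanging the order of summation and invoking Prop.~\ref{asymptotic_pi}(b) together with the convergence of $\sum_\ell\delta(\ell)$ and $\sum_{\ell_1\neq\ell_2}\delta(\ell_1\ell_2)$. The paper organizes the second moment slightly differently (it uses $(a+b)^2\le 2(a^2+b^2)$ and defers the $\omega_{\le z}^2$ bound to Thm.~\ref{omega_u}(b)), and it makes the exponent explicit ($c=5$, leading to $z=x^{1/12-\eta}$ and $\sqrt{z}$ for the square), but the substance is identical to what you wrote.
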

	
	\begin{corollary}
		For any function $h:\mathbb{N}\rightarrow [0, \infty)$ that increases to infinity, however slowly, the subset of primes
		$\{p: (a_1(p), a_2(p))> h(p)\}$ has density zero. 
	\end{corollary}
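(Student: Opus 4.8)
The plan is to use the hypothesis that $h$ increases to infinity to reduce the statement to a uniform bound on the density of primes $p$ with $(a_1(p),a_2(p))$ large, and then to derive that bound from the Chebotarev estimates underlying Theorem~\ref{omega}. Write $g(p)=(a_1(p),a_2(p))$. Since $h(p)\to\infty$, for each fixed integer $M>1$ there is $p_0(M)$ with $h(p)>M$ for all $p>p_0(M)$, whence
\[
|\{p\le x:g(p)>h(p)\}|\ \le\ p_0(M)+|\{p\le x:g(p)>M\}| ,
\]
so it suffices to prove that $\limsup_{x\to\infty}\pi(x)^{-1}|\{p\le x:g(p)>M\}|\to 0$ as $M\to\infty$. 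Because both forms are non-CM, the primes with $a_1(p)a_2(p)=0$ have density zero, so we may work in the $\sideset{}{'}\sum$ regime throughout.

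I would then deduce the required bound from the estimate
\[
\sideset{}{'}\sum_{p\le x}\log g(p)\ \ll\ \pi(x)\qquad(\text{under GRH}),
\]
with the implied constant depending only on $f_1,f_2$: by Markov's inequality, $|\{p\le x:g(p)>M,\ a_1(p)a_2(p)\neq 0\}|\le(\log M)^{-1}\sideset{}{'}\sum_{p\le x}\log g(p)\ll\pi(x)/\log M$, and letting $x\to\infty$ and then $M\to\infty$ proves density zero. (Equivalently one can argue straight from Theorem~\ref{omega}: for fixed $M$ one has $\{g(p)>M\}\subseteq\{\omega(g(p))\ge T\}\cup\{\exists\,\ell^a\mid g(p),\ \ell^a>M^{1/T}\}$, since $g(p)>M$ with every prime-power divisor of $g(p)$ at most $M^{1/T}$ forces $\omega(g(p))>T$; the first event has $\ll\pi(x)/T$ primes by Theorem~\ref{omega} and Markov, the second is $\le\sum_{\ell^a>M^{1/T}}|\{p\le x:\ell^a\mid g(p),\ a_1(p)a_2(p)\neq 0\}|$, and the choice $T=\log M/\log\log M$, so that $M^{1/T}=\log M\to\infty$, makes both contributions $o_M(1)\,\pi(x)$.)

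For the displayed estimate I would expand $\sideset{}{'}\sum_{p\le x}\log g(p)=\sum_{\ell,\,a\ge 1}(\log\ell)\,|\{p\le x:\ell^a\mid g(p),\ a_1(p)a_2(p)\neq 0\}|$ and split the outer sum at $\ell^a=x^{c_0}$ for a small fixed $c_0>0$. In the range $\ell^a\le x^{c_0}$ the condition $\ell^a\mid a_1(p)$ and $\ell^a\mid a_2(p)$ is a Chebotarev condition for the mod-$\ell^a$ Galois representations attached to $f_1$ and $f_2$; since the forms are non-CM and not twists of one another, these representations are jointly as large as the determinant constraints allow for all but finitely many $\ell$, so the density in question is $\ll\ell^{-2a}$ — this is the arithmetic function $\delta(\ell^a)$ of \S\ref{S_algebraic} — and effective Chebotarev under GRH gives $|\{p\le x:\ell^a\mid g(p)\}|\ll\ell^{-2a}\pi(x)+(\text{error})$. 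As $\sum_{\ell,\,a\ge 1}(\log\ell)\ell^{-2a}<\infty$ (from the convergence of $\sum_\ell\delta(\ell)$, Prop.~\ref{asymptotic_delta}, with the extra saving in $a$), the main terms contribute $\ll\pi(x)$, and for $c_0$ small the GRH errors sum to $o(\pi(x))$; the finitely many exceptional primes and the primes dividing $N_1N_2$ are absorbed in the same way. The hard range is $\ell^a>x^{c_0}$, where the conductors are far too large for effective Chebotarev to help; here one falls back on Deligne's bound $|a_i(p)|\le 2p^{(k_i-1)/2}$, which forces $\ell^a\le 2x^{(k_1-1)/2}$ whenever $\ell^a\mid a_1(p)\neq 0$, and, after interchanging the order of summation, the task becomes to show that
\[
\sum_{p\le x,\ a_1(p)a_2(p)\neq 0}\ \big|\{\ell^a:\ell^a\mid g(p),\ \ell^a>x^{c_0}\}\big|\ \ll\ \pi(x),
\]
i.e.\ that for almost all primes $p$ the common factor $g(p)$ carries no large prime-power divisor — precisely the type of estimate established in the proof of Theorem~\ref{omega} (exploiting that $g(p)\mid a_1(p)$ together with the rarity of primes for which $a_1(p)$ is close to a perfect power). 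This large-modulus range is the one genuine obstacle; the remaining steps are bookkeeping.
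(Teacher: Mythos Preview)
The paper gives no proof of this corollary; it is simply stated after Theorem~\ref{omega}. Your reduction to showing that $\limsup_{x\to\infty}\pi(x)^{-1}|\{p\le x:g(p)>M\}|\to 0$ as $M\to\infty$ is correct, and both routes you sketch are reasonable up to the point you yourself flag as the ``one genuine obstacle'': the contribution from prime powers $\ell^a>x^{c_0}$.

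The gap is that your proposed resolution of this obstacle does not work. You assert that this range is ``precisely the type of estimate established in the proof of Theorem~\ref{omega}'', but that proof (see~\eqref{nd} and Lemma~\ref{sum_pi_order}) handles the tail $\ell>z$ via the trivial bound
\[
\big|\{\ell>z:\ell\mid g(p)\}\big|\ \le\ \frac{\log g(p)}{\log z}\ \ll\ \frac{\log x}{\log z}\ =\ O(1),
\]
which is exactly enough to give $\sum_{\ell>z}\pi^*_{f_1,f_2}(x,\ell)\ll\pi(x)$ and hence $\sideset{}{'}\sum_{p\le x}\omega(g(p))\ll\pi(x)$. For your first approach you would need instead $\sum_{\ell>z,\,\ell\mid g(p)}\log\ell=O(1)$ for each $p$, and the only bound available is $\le\log g(p)=O(\log x)$, yielding $\sideset{}{'}\sum_{p\le x}\log g(p)\ll\pi(x)\log x$ rather than $\ll\pi(x)$. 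In your second approach the same issue recurs: the tail $\sum_{\ell>z}\pi^*_{f_1,f_2}(x,\ell)$ is only $O(\pi(x))$ with an implied constant independent of $M$, so it cannot be made $o_M(1)\,\pi(x)$. Your parenthetical about the ``rarity of primes for which $a_1(p)$ is close to a perfect power'' has no counterpart anywhere in the paper. In fact, the statement of Theorem~\ref{omega} alone cannot imply the corollary: a hypothetical sequence in which each $g(p)$ is a single large prime would satisfy $\omega(g(p))\equiv 1$ (so both moment bounds hold trivially) while $|\{p\le x:g(p)>M\}|\sim\pi(x)$ for every fixed $M$. So this is a genuine missing idea, not bookkeeping.
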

	
	\begin{remark}
		The upper bound in the above theorem is of the right order of magnitude as  the lower bound is also $\frac{x}{\log x}$. This is clear from  \eqref{zero_coefficient}.
		Thus, we have, in fact, 
		\[
		\sideset{}{'}\sum\limits_{p\le x} \omega((a_1(p),a_2(p))) \asymp \frac{x}{\log x}, \  \sideset{}{'}\sum\limits_{p\le x} \omega^2((a_1(p),a_2(p))) \asymp \frac{x}{\log x}.
		\]
	\end{remark}
	Obtaining  a precise asymptotic formula for these sums appears to be quite difficult, but
	if we replace the function $\omega$ with a function that counts only  small enough prime divisors of $(a_1(p),a_2(p))$, then we can extract a main term.
	To be precise,  let us  define for a positive real number $u$,
	\[
	\omega_u (n):=\sum_{p | n,\  p\leq u} 1.
	\]
	Then we have:
	\begin{theorem}\label{omega_u}
		Under GRH, and under the same assumptions and with the same notation as above,   there exist  explicit constants $c_1, c_2 >0$ such that for any $\ve>0$ we have the following. 
		\begin{itemize}
			\item[(a)]
			For any $u, \  x^\ve \le u<x^{1/12-\ve}$,
			\begin{equation*}
				\sideset{}{'}\sum\limits_{p\le x} \omega_u((a_1(p),a_2(p)))=c_1 \frac{x}{\log x}+O(x^{1-\ve}).
			\end{equation*}
			\item[(b)]
			For any $u, \ x^\ve \le u<x^{1/24-\ve}$,
			\begin{equation*}
				\sideset{}{'}\sum\limits_{p\le x} \omega_u^2((a_1(p),a_2(p)))=c_2 \frac{x}{\log x}+O(x^{1-\ve}).
			\end{equation*}
		\end{itemize}
		Furthermore, the constants $c_1$ and $c_2$ are given by
		\[
		c_1=\sum_{\ell}\delta(\ell),
		\]
		and
		$$
		c_2=\sum_{\substack{\ell_1,\ell_2\\ \ell_1\neq \ell_2}} \delta(\ell_1\ell_2)+ \sum_{\ell}\delta(\ell),
		$$
		where $\delta$ is defined by  \eqref{delta_definition} and $\ell, \ell_1, \ell_2$ run over the set of all primes. The implied constants in both {\rm (a)} and {\rm (b)} 
		depend only on the two forms. 
	\end{theorem}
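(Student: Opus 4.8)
The plan is to linearise $\omega_u$ (and, after squaring, $\omega_u^2$) into sums of divisibility conditions, interchange the order of summation, and feed the resulting prime counts into the effective version of \eqref{evl}. Under GRH that asymptotic (established in Prop.~\ref{asymptotic_pi}) reads
\[
\pi_{f_1,f_2}(x,m)=\delta(m)\,\pi(x)+O\!\left(m^{A}x^{1/2}\log(mx)\right),
\]
and one may take $A=5$: since $\delta(m)\ll m^{-2}$ and the field $K_m$ cut out by $\bar\rho_{f_1,m}\times\bar\rho_{f_2,m}$ satisfies $[K_m:\Q]\ll m^{7}$, the effective Chebotarev bound \eqref{effective_cdt} (carrying the factor $\delta(m)=|C_m|/|G_m|$ in front) gives the displayed error. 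The ranges $u<x^{1/12-\ve}$ and $u<x^{1/24-\ve}$ are precisely those in which the accumulated error stays $\ll x^{1-\ve}$: indeed $1/12=1/\bigl(2(A+1)\bigr)$ and $1/24=1/\bigl(4(A+1)\bigr)$.

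\emph{Part (a).} Writing $\omega_u(n)=\sum_{\ell\le u,\ \ell\mid n}1$ with $\ell$ prime and interchanging sums,
\[
\sideset{}{'}\sum_{p\le x}\omega_u\bigl((a_1(p),a_2(p))\bigr)=\sum_{\ell\le u}\ \sideset{}{'}\sum_{\substack{p\le x\\ \ell\mid a_1(p),\ \ell\mid a_2(p)}}1 .
\]
For each $\ell$ the inner sum is $\pi_{f_1,f_2}(x,\ell)$ minus the contribution of primes $p\le x$ with $a_1(p)a_2(p)=0$, and the latter number is $O(x^{3/4})$ under GRH, so over all $\ell\le u$ this causes an error $O(u\,x^{3/4})=O(x^{1-\ve})$. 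Applying \eqref{evl} and summing,
\[
\sideset{}{'}\sum_{p\le x}\omega_u\bigl((a_1(p),a_2(p))\bigr)=\pi(x)\sum_{\ell\le u}\delta(\ell)+O\!\left(x^{1/2}\log x\sum_{\ell\le u}\ell^{A}\right)+O(x^{1-\ve})=\pi(x)\sum_{\ell\le u}\delta(\ell)+O(x^{1-\ve}),
\]
the middle term being $\ll u^{A+1}x^{1/2}\log x\ll x^{1-\ve}$. Finally, $c_1=\sum_{\ell}\delta(\ell)$ converges (Prop.~\ref{asymptotic_delta}) with tail $\sum_{\ell>u}\delta(\ell)\ll u^{-1}\le x^{-\ve}$, so completing the sum costs only $\pi(x)/u\ll x^{1-\ve}$, leaving the main term $c_1\pi(x)$.

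\emph{Part (b).} From $\omega_u(n)^2=\sum_{\ell_1\neq\ell_2,\ \ell_i\le u,\ \ell_1\ell_2\mid n}1+\omega_u(n)$ we get
\[
\sideset{}{'}\sum_{p\le x}\omega_u^2\bigl((a_1(p),a_2(p))\bigr)=\sum_{\substack{\ell_1,\ell_2\le u\\ \ell_1\neq\ell_2}}\ \sideset{}{'}\sum_{\substack{p\le x\\ \ell_1\ell_2\mid a_1(p),\ \ell_1\ell_2\mid a_2(p)}}1\ +\ \sideset{}{'}\sum_{p\le x}\omega_u\bigl((a_1(p),a_2(p))\bigr).
\]
The last sum is (a). In the double sum the inner count is $\pi_{f_1,f_2}(x,\ell_1\ell_2)$ up to the same $O(u\,x^{3/4})$ error, and $m=\ell_1\ell_2$ ranges over squarefree integers $\le u^{2}<x^{1/12-2\ve}$, the same effective range as in (a) — this is exactly why the exponent halves to $1/24$. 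Applying \eqref{evl}, the errors total $\ll x^{1/2}\log x\,\bigl(\sum_{\ell\le u}\ell^{A}\bigr)^{2}\ll u^{2(A+1)}x^{1/2}\log x\ll x^{1-\ve}$, and the main term is $\pi(x)\sum_{\ell_1\neq\ell_2}\delta(\ell_1\ell_2)$; using multiplicativity of $\delta$ on large primes (Prop.~\ref{delta_multiplicative_large_prime}) and Prop.~\ref{asymptotic_delta}, this series converges with tail over $\max(\ell_1,\ell_2)>u$ of size $\ll u^{-1}$, so completing it costs $O(x^{1-\ve})$. Together with (a) the main term is $\bigl(\sum_{\ell_1\neq\ell_2}\delta(\ell_1\ell_2)+\sum_{\ell}\delta(\ell)\bigr)\pi(x)=c_2\pi(x)$.

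The genuine difficulty lies in the uniformity in $m$ of \eqref{evl}: one needs the effective Chebotarev error for $\bar\rho_{f_1,m}\times\bar\rho_{f_2,m}$ to grow only polynomially in $m$, with exponent good enough that the sums above converge within the claimed ranges; improving these ranges would require cancellation among the error terms for distinct $m$, beyond what GRH-effective Chebotarev provides. A secondary, bookkeeping, point is that $\delta(m)$ is defined group-theoretically by \eqref{delta_definition} and can be irregular at the finitely many exceptional primes and at primes dividing $N_1N_2$, but these contribute $O(1)$ and are absorbed into the implied constants — one only needs $\delta(\ell)\ll\ell^{-2}$ and the effective estimate to hold for all sufficiently large $\ell$, which is guaranteed by the hypotheses that $f_1,f_2$ are non-CM and not twists of one another (so the image of the product representation is as large as possible). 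The remaining steps — the interchanges of finite sums, the zero-coefficient bound, and partial summation against Prop.~\ref{asymptotic_delta} — are routine.
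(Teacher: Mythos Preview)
Your proof is correct and follows essentially the same route as the paper: linearise $\omega_u$ (and $\omega_u^2$) into divisibility conditions, swap the order of summation, apply Prop.~\ref{asymptotic_pi*} (i.e.\ \eqref{evl} plus the zero-coefficient bound \eqref{zero_coefficient}), sum the Chebotarev errors, and complete the tail using Prop.~\ref{asymptotic_delta}. The paper in fact omits part (b) entirely, so your explicit treatment there---writing $\omega_u^2=\omega_u+\sum_{\ell_1\ne\ell_2}$ and noting that $m=\ell_1\ell_2\le u^2$ forces the halved exponent $1/24$---is a welcome addition. One small inaccuracy: your parenthetical justification of the exponent $A=5$ via ``$\delta(m)\ll m^{-2}$'' is not how the paper obtains it (and that bound is only proved for primes $\ell>M$); the actual reason is that in \eqref{effective_cdt} the factor $\tfrac{|C|}{|G|}\cdot n_K$ collapses to $|\mathscr{C}_m|\ll m^5$. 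This does not affect your argument, since you are simply quoting \eqref{evl}.
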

	\begin{remark}\label{quasi2}
	As in the case of Theorem \ref{sieve}, an analysis of the proof shows that a quasi-GRH statement is sufficient for  proving Theorem \ref{omega} and Theorem \ref{omega_u}. 
	\end{remark}

	\subsection{Structure of the paper and the basic ideas of the proofs}
	The main tools used as black boxes in the proofs are 
	Deligne's theorem on Galois representations attached to eigenforms and the Chebotarev Density Theorem, especially the effective version due to Lagarias and Odlyzko. These results are recalled in \S \ref{S_basic}. In \S \ref{S_algebraic}, we use the above machinery to determine the size of the image of the mod-$m$ Galois representation attached to the pair of eigenforms and to obtain the asymptotic size of the function $\delta(m)$. Here the results of Ribet and Loeffler on the image of the  Galois representations associated to a collection of modular forms play a major role. 
	The main result in the next section is an asymptotic formula (see  Prop. \ref{asymptotic_pi}) for the number of primes $p$ up to $x$ such that both $a_1 (p)$ and $a_2 (p)$ are divisible by a fixed positive integer $m$. The  idea here is to translate the divisibility condition into a statement about the traces of $\bar\rho_{f_i,m}(\textnormal{Frob}_p)$, where $\bar\rho_{f_i,m}$ denotes mod-$m$ Galois representation associated to the form $f_i$ by Deligne's theorem. Thereafter, in \S \ref{S_preparation}, we again obtain some intermediate technical results of analytic nature which are required in the latter sections. Finally, in \S \ref{S_main}, \S \ref{proof-sieve} and \S \ref{S_omega}, we finish the proofs by  applying  techniques from Analytic Number Theory;
in particular,  those  developed and employed by by P. Erd\H{o}s,  V. Kumar Murty and M. Ram Murty (see, e.g.,  \cite{erdo}, \cite{erd}, \cite{mm}, \cite{mm2}, \cite{km}).

	\subsection{Notation and conventions}
	By density
	of a subset $S$ of the set of primes, we mean the natural density; i.e., $\lim_{x\rightarrow \infty} \frac{|S\cap[1, x]|}{\pi(x)}$
	($|A|$ denotes the size of a subset $A$ of $\mathbb{N}$), 
	if the limit exists.
	Here $\pi(x)$ denotes the number of primes less than or equal to $x$ for 
	any real number $x\ge 2$.
	The letter $\ve$ will denote a positive real 
	number which can be taken to be as small as we want and in different occurrences it may assume different values. 
	The notation ``$f(y) =O(g(y))$" or ``$f(y) \ll g(y)$", where $g$ is a positive function, mean
	that there is a constant $c>0$ such that $|f(y)|\leq cg(y)$ for any $y$ in the concerned domain. 
	The dependence of this implied constant on some parameter(s) may sometimes
	be displayed by  a suffix (or suffixes) and may sometimes be suppressed but it will be clear from the context. For example, the implied constant will often depend on the pair of forms under consideration. For two positive functions $f$ and $g$, the notation $f(x) \asymp g(x)$ means both the bounds
	$f(x) \ll g(x)$ and $g(x) \ll f(x)$ hold. 
	The notation ``$f(y)=o(g(y))$", where $g$ is a positive function, means that $f(y)/g(y) \rightarrow 0$
	as $y\rightarrow \infty$ and ``$f(y) \sim g(y)$" means $f(y)-g(y) =o(g(y))$. The letters
	$p, q, \ell,\ell_1, \ell_2$ etc.~will denote prime numbers throughout. We write $L_1(x)=\log x$ and for $i\ge 2$, define $L_i(x)$ inductively  by $L_i(x)=\log L_{i-1}(x).$ PNT and CDT denote the Prime Number Theorem and the Chebotarev Density Theorem, respectively.

	\section{Background materials}\label{S_basic}
	\subsection{Chebotarev Density Theorem}
	Let $K$ be a finite Galois extension of $\Q$ with the Galois group $G$ and degree $n_K$. 
	For an unramified prime $p$, we denote by ${\rm Frob}_p$, a Frobenius element of $K$ at $p$ in $G$.
	For a subset $C$ of $G$, stable under conjugation, we define 
	$$ 
	\pi_C(x):=\{p\le x: p~ {\rm unramified ~in}~ K ~{\rm and ~Frob}_p\in C\}.
	$$
	Let $d_K$ denote the absolute value of the discriminant of $K/\Q$. An effective version of  the Chebotarev Density Theorem (denoted by CDT henceforth) was established  by Lagarias and Odlyzko \cite{lo} and it asserts that there is a constant $c_1>0$ such that  for every $x\geq 2$ with $\log x \geq c_1 n_K (\log d_K)^2 $, we have
	\begin{equation}\label{cdt}
		\left|\pi_C(x)- \frac{|C|}{|G|}{\rm Li}(x) \right| \leq  \frac{|C|}{|G|}{\rm Li}(x^\beta)+O\left( \Vert C\Vert x ~{\rm exp}\left(-c'\sqrt{\frac{\log x}{n_K}}\right) \right),
	\end{equation}
	where  the constant $c'>0$ is effectively computable, $\Vert C\Vert$ denotes the number of conjugacy classes contained in $C$ and ${\rm Li}(x)=\int_2^x \frac{dt}{\log t}$, the logarithmic integral function. Here  $\beta$ is the possibly existing real ``exceptional zero'', also called ``the Landau-Siegel zero" of  the Dedekind zeta function $\zeta_K(s)$ in the strip 
	$$
	1-\frac{1}{4\log d_K}\le \Re (s)<1;
	$$
	and if $\beta$ does not exist, then the corresponding term is omitted from  \eqref{cdt}. By the works of Heilbronn \cite{hei} and Stark \cite{sta}, we know
	that (see \cite[Eq. (27)]{sta})
	\begin{equation}\label{beta_stark}
		\beta \le 1-\frac{c_0}{d_K^{1/n_K}},
	\end{equation}
	where $c_0>0$ is an effective constant. We also need a conditional version of  CDT, that is, under the assumption of GRH. 
	This was first obtained by Lagarias and Odlyzko (op. cit.). We quote the following from \cite[Thm. 4]{ser}. 
	\begin{proposition}
		Suppose the Dedekind zeta function $\zeta_K(s)$ 
		satisfies the Riemann Hypothesis. Then for every $x\geq 2$, 
		\begin{equation}\label{effective_cdt}
			\pi_C(x)= \frac{|C|}{|G|}\pi(x)+O\bigg(\frac{|C|}{|G|}x^{{1}/{2}}(\log d_K+n_K\log x)\bigg).
		\end{equation}
		%		where $d_K$ denotes the absolute value of the discriminant of $K/\Q$.
	\end{proposition}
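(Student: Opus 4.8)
This is the conditional form of the effective Chebotarev density theorem of Lagarias and Odlyzko (in the refinement recorded by Serre), and the plan is to reprove it by the standard argument built on Artin $L$-functions, Brauer induction, the explicit formula under GRH, and the conductor--discriminant formula. First I would pass from $\pi_C$ to the weighted counting function $\psi_C(x)=\sum_{p^m\le x}\mathbf 1_C({\rm Frob}_p^m)\log p$, the sum running over rational prime powers with $p$ unramified in $K$; discarding the proper prime powers ($m\ge 2$) and then converting the resulting prime sum $\theta_C$ to $\pi_C$ by partial summation costs only $O(x^{1/2}\log x)$, which is absorbed into the target error since $\tfrac{|C|}{|G|}n_K=|C|\ge 1$ (and ${\rm Li}(x)=\pi(x)+O(x^{1/2}\log x)$ under RH for $\zeta$). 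Since $\mathbf 1_C$ is a class function, expand $\mathbf 1_C=\sum_\chi a_\chi\chi$ over the irreducible characters of $G$, with $a_\chi=\tfrac1{|G|}\sum_{\sigma\in C}\overline{\chi(\sigma)}$, so that $a_{\mathbf 1}=|C|/|G|$ and $|a_\chi|\le \tfrac{|C|}{|G|}\chi(1)$. Then $\psi_C(x)=\sum_\chi a_\chi\,\psi(x,\chi)$, where $\psi(x,\chi)=\sum_{p^m\le x}\chi({\rm Frob}_p^m)\log p$ is, up to a bounded correction from the ramified Euler factors, the summatory function of $-L'/L(s,\chi)$ for the Artin $L$-function $L(s,\chi,K/\Q)$. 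The only main term comes from $\chi=\mathbf 1$ and equals $a_{\mathbf 1}x=\tfrac{|C|}{|G|}x$, so the whole game is to bound $\sum_{\chi\neq\mathbf 1}a_\chi\psi(x,\chi)$.

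For $\chi\neq\mathbf 1$ I would avoid Artin's holomorphy conjecture by writing $\chi=\sum_i n_i\,{\rm Ind}_{H_i}^G\psi_i$ with $n_i\in\Z$, $H_i\le G$ and $\psi_i$ one-dimensional (Brauer), so that $L(s,\chi)=\prod_i L(s,\psi_i,K/K^{H_i})^{n_i}$ is a ratio of abelian Hecke $L$-functions of the intermediate fields $K^{H_i}$; the zeros of these Hecke $L$-functions lie among those of $\zeta_{K^{H_i}}$, which by Aramata--Brauer lie among those of $\zeta_K$, so the hypothesis (RH for $\zeta_K$) forces all of them onto $\Re(s)=1/2$. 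For each Hecke $L$-function one runs truncated Perron, shifts the contour past $\Re(s)=0$, and uses GRH together with the zero count $N(T)\ll \log{\mathfrak f}(\psi_i)+[K^{H_i}:\Q]\log(T+2)$; the poles cancel because $\langle\chi,\mathbf 1\rangle=0$, and the conductor--discriminant formula for induced characters together with a quantitative Brauer lemma (controlling $\sum_i|n_i|$ and the conductors in terms of ${\mathfrak f}(\chi)$ and $\chi(1)$) yields, for $\chi\neq\mathbf 1$,
\[
\psi(x,\chi)\ \ll\ x^{1/2}\bigl(\log{\mathfrak f}(\chi)+\chi(1)\log x\bigr)\log x,
\]
the ramified and prime-power corrections being of lower order.

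Inserting this and $|a_\chi|\le\tfrac{|C|}{|G|}\chi(1)$,
\[
\Bigl|\psi_C(x)-\tfrac{|C|}{|G|}x\Bigr|\ \ll\ \tfrac{|C|}{|G|}\,x^{1/2}\log x\Bigl(\sum_\chi\chi(1)\log{\mathfrak f}(\chi)+\log x\sum_\chi\chi(1)^2\Bigr),
\]
and now the conductor--discriminant formula gives $\sum_\chi\chi(1)\log{\mathfrak f}(\chi)=\log d_K$ while the regular-representation identity gives $\sum_\chi\chi(1)^2=|G|=n_K$. Hence $\psi_C(x)-\tfrac{|C|}{|G|}x\ll \tfrac{|C|}{|G|}x^{1/2}\log x(\log d_K+n_K\log x)$, and undoing the reduction of the first paragraph gives the asserted estimate for $\pi_C(x)$ with $\pi(x)$ in the main term.

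Nothing above is conceptually hard once Brauer induction, the analytic theory of Hecke $L$-functions, and the conductor--discriminant formula are taken as known; the substance is entirely in the \emph{uniform, quantitative} bookkeeping, and that is the main obstacle. One must make the Brauer decomposition effective so that $\sum_i|n_i|\log{\mathfrak f}({\rm Ind}\,\psi_i)$ is genuinely bounded in terms of $\log{\mathfrak f}(\chi)$ and $\chi(1)$; one must run the contour argument so that the Perron truncation error, the sum $\sum_\rho 1/|\rho|$ over zeros, and the ramified corrections all stay within the stated bound uniformly in $\chi$ and in $x\ge 2$ (small $x$ included), with an absolute implied constant; and to reach the precise shape $x^{1/2}(\log d_K+n_K\log x)$ rather than the weaker bound with one extra factor of $\log x$ that the crude truncated Perron step produces, one replaces that step by a smoothed explicit formula. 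This quantitative control is precisely the content of the Lagarias--Odlyzko paper and of Serre's refinement, so in practice one would simply cite it, as is done here.
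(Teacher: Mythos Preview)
The paper does not prove this proposition at all; it is simply quoted from Serre \cite[Th\'eor\`eme 4]{ser} (itself a refinement of Lagarias--Odlyzko \cite{lo}) and used as a black box throughout. Your outline is an accurate reconstruction of the Lagarias--Odlyzko--Serre argument, and you yourself acknowledge in the final paragraph that in practice one cites the result rather than reproving it. So there is nothing to compare: the paper's ``proof'' is the citation, and your proposal sketches what lies behind that citation.
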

	\begin{remark}\label{quasi-Chebo}
	 If we assume a milder version of the hypothesis; namely that $\zeta_K(s)$ does not vanish if $\Re (s)>1-\delta$ for some $\delta \in (0, 1/2)$
	then we obtain a similar asymptotic formula, the only difference being that the exponent of $x$ becomes $1-\delta$.
	\end{remark}
	
	\subsection{mod-$m$ Galois representations}
	In this section, we recall some of the fundamental results on 
	Galois representations associated with modular forms. 
	Let $G_{\Q}= {\rm Gal}(\bar\Q/\Q)$ be the Galois group of an algebraic closure $\bar\Q$ of $\Q$. The following
	result is due to Deligne.
	\begin{theorem}\cite{del}
		Let $k\ge 2, N\ge 1$ and let $ \ell $ be a prime. Then to any normalized eigenform $f\in S_k(N)$ 
		with integer Fourier coefficients $a_f(n)$  one can attach a continuous two-dimensional 
		Galois representation of the rationals
		\begin{equation*}
			\rho_{f, \ell }:G_{\Q}\rightarrow {\rm GL}_2(\Z_\ell )
		\end{equation*}
		such that $\rho_ {f,\ell}$ is odd and irreducible. Also, for all primes $p\nmid N\ell$
		the representation $\rho_ {f,\ell}$ is unramified at $p$ and  
		$$ {\rm{tr}}(\rho_ {f,\ell}({\rm{Frob}}_p))=a_f(p), ~~~~ {\rm {det}}(\rho_ {f,\ell}({\rm{Frob}}_p))=p^{k-1}.$$
	\end{theorem}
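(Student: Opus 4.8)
The plan is to reconstruct Deligne's geometric construction: realize $\rho_{f,\ell}$ inside the $\ell$-adic \'etale cohomology of a Kuga--Sato variety and read off the Frobenius data from the Eichler--Shimura congruence relation. I would first treat weight $k=2$ as a model. Over $\Q$ one has the modular curve $X_1(N)$, smooth and projective with good reduction away from $N$; its Jacobian $J=\mathrm{Jac}(X_1(N))$ is an abelian variety over $\Q$, and the $\ell$-adic Tate module $V_\ell(J)=\bigl(\varprojlim_n J[\ell^n]\bigr)\otimes_{\Z_\ell}\Q_\ell$ is a $\Q_\ell$-vector space with continuous $G_\Q$-action which, by the N\'eron--Ogg--Shafarevich criterion, is unramified outside $N\ell$. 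The Hecke operators act on $J$ by correspondences and commute with $G_\Q$; since by hypothesis the eigenvalues $a_f(n)$ lie in $\Z$, they define a homomorphism $\mathbb T\to\Z$ whose associated $\Q_\ell$-subquotient $V_f$ of $V_\ell(J)$ is two-dimensional. The image of $G_\Q$ in $\mathrm{GL}(V_f)$ is compact, hence preserves a $\Z_\ell$-lattice, and choosing such a lattice yields $\rho_{f,\ell}\colon G_\Q\to\mathrm{GL}_2(\Z_\ell)$.

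For general $k\ge 2$ I would replace $J$ by the Kuga--Sato variety: with $\mathcal E\to Y_1(N)$ the universal elliptic curve, form the $(k-2)$-fold fibre product $\mathcal E^{k-2}$, and let $W$ be a smooth compactification (as constructed by Deligne). The space $V_f$ is then the $f$-isotypic piece of a Hecke-stable direct summand of the \'etale cohomology $H^{k-1}(W_{\overline\Q},\Q_\ell)$, the summand being cut out by an idempotent assembled from the symmetric-group action on the fibres together with the projector onto parabolic (``cuspidal'') cohomology; via the Eichler--Shimura isomorphism this summand is identified with a space built from $S_k(N)$, which shows $\dim_{\Q_\ell}V_f=2$. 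Continuity is automatic for \'etale cohomology, and unramifiedness outside $N\ell$ follows from smooth--proper base change once one controls the contribution of the cusps --- this is one delicate point, since $W$ need not itself have everywhere good reduction, so one must work with the interior cohomology, which Deligne shows is unramified away from $N\ell$.

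Next I would pin down the trace and determinant of Frobenius. For $p\nmid N\ell$ the Eichler--Shimura congruence asserts that, as correspondences on the special fibre over $\overline\F_p$, the Hecke operator $T_p$ reduces to $F+V$, where $F=\mathrm{Frob}_p$ is the Frobenius and $V$ its transpose, and on the relevant weight-$(k-1)$ summand $FV=VF=p^{k-1}$. This yields the operator identity
\[
F^2 - a_f(p)\,F + p^{k-1} = 0 \qquad\text{on } V_f ,
\]
so since $\dim V_f=2$ the characteristic polynomial of $\rho_{f,\ell}(\mathrm{Frob}_p)$ is precisely $X^2-a_f(p)X+p^{k-1}$, whence $\mathrm{tr}\,\rho_{f,\ell}(\mathrm{Frob}_p)=a_f(p)$ and $\det\rho_{f,\ell}(\mathrm{Frob}_p)=p^{k-1}$; as the Frobenius classes are dense in the image (Chebotarev) this forces $\det\rho_{f,\ell}=\chi_\ell^{\,k-1}$, the $(k-1)$st power of the $\ell$-adic cyclotomic character (here the nebentypus is trivial). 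Oddness is then immediate: a complex conjugation $c$ satisfies $\chi_\ell(c)=-1$, and a nonzero form of trivial character has even weight, so $\det\rho_{f,\ell}(c)=(-1)^{k-1}=-1$; equivalently, $c$ swaps the two Hodge lines of the underlying weight-$(k-1)$ Hodge structure and thus acts with eigenvalues $+1$ and $-1$.

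Finally, for irreducibility I would invoke Ribet's theorem. If $\rho_{f,\ell}\otimes\overline\Q_\ell$ were reducible, its semisimplification would split as $\psi_1\oplus\psi_2$ with $\psi_1\psi_2=\chi_\ell^{\,k-1}$, and comparing traces of Frobenius would give $a_f(p)=\psi_1(p)+\psi_2(p)$ for almost all $p$; identifying the $\psi_i$ (via their limited ramification) with Dirichlet characters times powers of the cyclotomic character, this would force $L(s,f)$ to be a product of Dirichlet $L$-functions, impossible for a cusp form. The genuine obstacle in all of this is the geometry behind the middle two steps: constructing the smooth Kuga--Sato model, isolating the correct cohomological summand, controlling the cusps so as to obtain good reduction outside $N\ell$, and above all establishing the higher-weight Eichler--Shimura congruence, which rests on analyzing the reduction modulo $p$ of the Hecke correspondences on $W$. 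Everything downstream of that --- the trace/determinant computation, oddness, and the appeal to Ribet --- is essentially formal.
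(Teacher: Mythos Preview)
The paper does not prove this theorem at all: it is quoted as a black box, attributed to Deligne via the citation \cite{del}, and used only as input for the subsequent constructions (the mod-$m$ representations and the application of the Chebotarev density theorem). There is therefore no ``paper's own proof'' to compare your attempt against.

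That said, your sketch is a faithful outline of the standard argument --- realizing $V_f$ in the \'etale cohomology of a Kuga--Sato variety, extracting the trace and determinant from the Eichler--Shimura congruence, deducing oddness from $\det\rho_{f,\ell}=\chi_\ell^{k-1}$, and appealing to Ribet for irreducibility --- and you correctly flag the genuinely hard steps (the compactification, the control of parabolic cohomology, and the higher-weight congruence). For the purposes of this paper, however, none of that is needed: the authors only use the \emph{statement}, and your effort would be better spent on the parts of the paper that actually contain arguments.
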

	
	\noindent
	By reduction and semi-simplification, we obtain a mod-$\ell$ Galois representation, namely 
	$$ {\overline{\rho}}_ {f,\ell}:G_{\Q}\rightarrow {\rm GL}_2(\F_\ell), $$
	where $\F_{\ell}:=\Z/\ell\Z$.
	
	Let $m$ be a positive integer with prime factorization $m= \prod_{j=1}^{r}\ell_j^{e_j}$.
	Using the $\ell_j$-adic representations associated to $f$, 
	we construct an $m$-adic representation %(see \cite{mm})
	\begin{equation*}
		{\rho}_{f,m}:  G_\Q \rightarrow {\rm GL}_2\big(\prod_{1\le j\le r}\Z_{\ell_j}\big).
	\end{equation*}
	
	For each $1\le j\le r$, we have the natural projection $\Z_{\ell_j}\twoheadrightarrow \Z/{\ell_j^{e_j}}\Z$,
	and hence we get the reduction ${\overline{\rho}}_{f,m}$ of $m$-adic representation given by 
	
	$$ {\overline{\rho}}_{f,m} :G_\Q \rightarrow
	{\rm GL}_2(\prod_{1\le j\le r}  \Z/{\ell_j^{e_j}}\Z) \xrightarrow{\cong} {\rm GL}_2(\Z/m\Z).$$
	Furthermore, if $p\nmid mN$ is a prime, then $\bar{\rho}_{f,m}$ 
	is unramified at $p$ and
	$$ {\rm tr}\left(\bar{\rho}_{f,m}\lb{\rm Frob}_p\rb\right)\equiv a_f(p) \pmod m,~~~~ 
	{\rm det}\left(\bar{\rho}_{f,m}\lb{\rm Frob}_p\rb\right)\equiv p^{k-1}\pmod m. $$
	
	\section{Algebraic preliminaries}\label{S_algebraic}
	Let $f_1 \in S_{k_1}(N_1)$ and $f_2 \in S_{k_2}(N_2)$ be as in the introduction and suppose they have integer Fourier coefficients $a_1(n)$ and $a_2(n)$, respectively.
	Suppose  $\bar{\rho}_{f_i,m}: G_\Q \rightarrow {\rm GL}_2(\Z/m\Z)$, for $i=1,2$, denotes the mod-$m$ Galois representation associated to $f_i$. Then we consider the product 
	representation $\bar{\rho}_m$ of $\bar{\rho}_{f_1,m}$ and $\bar{\rho}_{f_2,m}$, defined by
	\begin{align*}
		\bar{\rho}_m: G_\Q & \rightarrow {\rm GL}_2(\Z/m\Z)\times {\rm GL}_2(\Z/m\Z),\\
		\sigma & \mapsto (\bar{\rho}_{f_1,m}(\sigma), \bar{\rho}_{f_2,m}(\sigma)) \notag.
	\end{align*}
	Let $\mathscr{A}_m$ denote the image of $G_\Q$ under $\bar{\rho}_m$ and
	let $$ H_m:= {\rm Ker}(\bar{\rho}_m)= \{ \sigma \in G_\Q : \bar{\rho}_{f_1,m}(\sigma)=
	\bar{\rho}_{f_2,m}(\sigma)= {\rm Id} \},$$
	where Id denotes the identity element of the group ${\rm GL}_2(\Z/m\Z)$. 
	Therefore,
	\begin{equation}\label{id1}
		\frac{G_\Q}{H_m} \cong \mathscr{A}_m. 
	\end{equation}
	Since $\bar{\rho}_{f_i,m}$ is a continuous homomorphism for each $i=1,2$, $\bar{\rho}_m$ 
	is continuous. Hence
	$H_m$ is an open and closed normal subgroup of $G_\Q$  (the target group of $\bar{\rho}_{f_i,m}$ being equipped with the discrete topology). 
	By the fundamental theorem of Galois theory,  the fixed field of $H_m$, say $L_m$, is a finite 
	Galois extension of $\Q$ and
	\begin{equation}\label{id2}
		\frac{G_{\Q}}{H_m}\cong {\rm Gal}(L_m/ \Q).
	\end{equation}
	Combining \eqref{id1} and  \eqref{id2}, we have
	\begin{equation}\label{image_isomorphism}
		{\rm Gal}(L_m/ \Q) \cong \mathscr{A}_m.
	\end{equation}
	Let $\mathscr{C}_m$ be a subset of $\mathscr{A}_m$ defined by
	\begin{equation}\label{definition_C_m}
		\mathscr{C}_m=\{(A,B)\in \mathscr{A}_m: {\rm tr}(A)={\rm tr}(B)=0\}.
	\end{equation}
	%\subsection{The function $\delta(m)$}
	Let us now define the following function on the set of positive integers which plays an important role in this work.
	\begin{definition}
		For an integer $m >1$, define
		\begin{equation}\label{delta_definition}
			\delta(m):=\frac{|\mathscr{C}_m|}{|\mathscr{A}_m|}
		\end{equation}
		and $\delta(1):=1$. 
	\end{definition}
	
	\begin{remark}\label{delta_positive}
		Since the trace of the 
		image of complex conjugation is always zero, 
		$\mathscr{C}_m\neq \phi$, and hence $\delta(m)>0$ for every integer $m$. Furthermore, because the identity element lies in $\mathscr{A}_\ell$ but not in $\mathscr{C}_\ell$ for any odd prime $\ell$, we always have
		$ \delta(\ell)<1$. The prime $2$ is special and there are normalized eigenforms all whose coefficients at primes are even; e.g., the Ramanujan Delta function. 
		%Moreover, we know that $\mathscr{A}_\ell$ is generated by the image of Frobenius elements under $\bar\rho_\ell$, and thus  there exists a prime $p \neq \ell$ such that $\bar\rho_\ell({\rm Frob}_p)\not\in \mathscr{C}_\ell$. In particular, either $\ell\nmid a_1 (p)$ or $\ell\nmid a_2(p)$. This is why we need to consider the case that the prime $2$ divides all the coefficients of both $f_1$ and $f_2$ separately as we have done  in part (b) of  \thmref{main} but not any other prime.
	\end{remark}

	\subsection{Sizes of  $\mathscr{A}_{\ell}$ and  $\mathscr{C}_{\ell}$ for large primes $\ell$}\label{delta-asymp}
	%For newforms $f_1$ and $f_2$ as before, we now compute the cardinality 
	%of the sets $\mathscr{A}_{\ell}$ and $\mathscr{C}_{\ell}$, for any
	%sufficiently large prime $\ell$.
	%In a special case when $N=1$, these computations together with the multiplicativity of $\delta$ gives an asymptotic 
	%formula for $\delta(m)$, for a positive integer $m$.
	It is clear that for any prime $\ell$ and any integer $n\geq 1$, $\mathscr{A}_{{\ell}^n}$ is contained in the set
	\begin{align*}
		\{(A, B)\in {\rm GL}_2(\Z/\ell^n\Z)\times {\rm GL}_2(\Z/\ell^n\Z): 
		{\rm det}(A)=v^{k_1 -1}, {\rm det} (B)= v^{k_2 -1}, v \in {(\Z/\ell^n\Z)}^{\times} \}
	\end{align*}
	and it follows from the work of Loeffler \cite[Thm. 3.2.2]{loe} (see also \cite[Thm. (6.1)]{rib} 
	for an earlier result 
	for forms of level $1$), that 
	outside a finite set of exceptional  primes  determined by the two forms, $\mathscr{A}_{{\ell}^n}$ is exactly the above set; i.e., the image of the product mod-${{\ell}^n}$ representation of two eigenforms (that are not twists of each other)
	is as large as possible if $\ell$ does not belong to a finite set. In other words, there is a positive integer $M=M(f_1, f_2)$ such that for every prime $\ell>M$ and every integer $n\geq 1$, we have,
	\begin{align}\label{image}
		\mathscr{A}_{{\ell}^n}=\{(A, B)\in {\rm GL}_2(\Z/\ell^n\Z)\times {\rm GL}_2(\Z/\ell^n\Z): 
		{\rm det}(A)=v^{k_1 -1},   {\rm det} (B)= v^{k_2 -1}, v \in {(\Z/\ell^n\Z)}^{\times} \}.
	\end{align}
	Clearly, we also have, for $\ell>M$,
	\begin{align}\label{image_C}
		\mathscr{C}_{\ell^n} = \{(A, B)\in \mathscr{A}_{{\ell}^n}: 
		{\rm tr}(A)={\rm tr}(B)=0 \}.
	\end{align}
	
	In the next two lemmas, we compute the cardinalities of $\mathscr{A}_{\ell}$ and $\mathscr{C}_{\ell}$ for  $\ell>M$. We first set
	$$
	d=(\ell-1,k_1-1,k_2-1).
	$$
	
	\begin{lemma}\label{A_l_computation}
		For  any prime $\ell > M$,
		$$ | \mathscr{A}_{\ell} | =\frac{1}{d}(\ell-1)^3(\ell^2+\ell)^2.$$
		
	\end{lemma}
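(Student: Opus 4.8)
The plan is to compute $|\mathscr{A}_\ell|$ directly from the explicit description \eqref{image} with $n=1$. Write
\[
\mathscr{A}_\ell=\{(A,B)\in {\rm GL}_2(\F_\ell)\times {\rm GL}_2(\F_\ell): \det(A)=v^{k_1-1},\ \det(B)=v^{k_2-1}\ \text{for some}\ v\in\F_\ell^\times\}.
\]
First I would partition $\mathscr{A}_\ell$ according to the pair of determinants $(\det A,\det B)$. Thus the first step is to count the image of the map $\F_\ell^\times \to \F_\ell^\times\times\F_\ell^\times$, $v\mapsto (v^{k_1-1},v^{k_2-1})$. This is a homomorphism of cyclic groups; its kernel consists of those $v$ with $v^{k_1-1}=v^{k_2-1}=1$, i.e.\ $v^d=1$ where $d=(\ell-1,k_1-1,k_2-1)$, so the kernel has size $d$ and the image has size $(\ell-1)/d$. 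Hence there are exactly $(\ell-1)/d$ admissible determinant-pairs $(\delta_1,\delta_2)$, each arising from the same number of $v$'s.

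The second step is to count, for a \emph{fixed} target determinant $\delta\in\F_\ell^\times$, the size of the fibre $\{A\in {\rm GL}_2(\F_\ell):\det A=\delta\}$, i.e.\ the size of the coset space for ${\rm SL}_2(\F_\ell)\trianglelefteq {\rm GL}_2(\F_\ell)$. Since $\det:{\rm GL}_2(\F_\ell)\to\F_\ell^\times$ is surjective with kernel ${\rm SL}_2(\F_\ell)$, every fibre has size $|{\rm SL}_2(\F_\ell)|=\ell(\ell-1)(\ell+1)=\ell^3-\ell$. Equivalently $|{\rm GL}_2(\F_\ell)|=(\ell^2-1)(\ell^2-\ell)=(\ell-1)^2\ell(\ell+1)$, and dividing by $\ell-1$ gives the fibre size. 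Note $\ell(\ell-1)(\ell+1)=(\ell-1)(\ell^2+\ell)$, which will match the shape of the claimed formula.

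The third step simply assembles the pieces: since the conditions on $A$ and on $B$ decouple once the determinant pair is fixed,
\[
|\mathscr{A}_\ell|=\sum_{(\delta_1,\delta_2)}\bigl|\{A:\det A=\delta_1\}\bigr|\cdot\bigl|\{B:\det B=\delta_2\}\bigr|
=\frac{\ell-1}{d}\cdot\bigl((\ell-1)(\ell^2+\ell)\bigr)^2=\frac{1}{d}(\ell-1)^3(\ell^2+\ell)^2,
\]
which is exactly the asserted cardinality. There is no real obstacle here; the only point requiring a moment's care is the counting of admissible determinant pairs, i.e.\ correctly identifying the relevant gcd as $d=(\ell-1,k_1-1,k_2-1)$ rather than, say, $(k_1-1,k_2-1)$ — the reduction of exponents modulo $\ell-1$ in the cyclic group $\F_\ell^\times$ is what introduces the factor $\ell-1$. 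The hypothesis $\ell>M$ is used only to guarantee that \eqref{image} holds, so that $\mathscr{A}_\ell$ really is the full group described above; everything after that is elementary group theory over $\F_\ell$.
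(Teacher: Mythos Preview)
Your proof is correct and follows essentially the same route as the paper: partition $\mathscr{A}_\ell$ by the determinant pair $(\det A,\det B)$, count the image of $v\mapsto(v^{k_1-1},v^{k_2-1})$ as $(\ell-1)/d$ via its kernel, and multiply by $|{\rm SL}_2(\F_\ell)|^2$. The paper names the image set $\Lambda$ and records $|\Lambda|=(\ell-1)/d$ separately (for reuse in the next lemma), but the argument is otherwise identical to yours.
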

	\begin{proof}
		First we consider the group $\Lambda$, defined by
		\begin{equation}\label{Lambda}
			\Lambda=\{ (v^{k_1-1},v^{k_2-1}): v\in \mathbb F_\ell^{\times}  \}.
		\end{equation}
		Therefore
		\begin{align*}
			|\mathscr{A}_{\ell}|&= \sum_{(t_1,t_2)\in \Lambda} |\{(A, B)\in {\rm GL}_2(\F_\ell)\times {\rm GL}_2(\F_\ell): 
			({\rm det}(A), {\rm det} (B))= (t_1,t_2)\}|\\
			&=  \sum_{(t_1,t_2)\in \Lambda} |\{A\in {\rm GL}_2(\mathbb F_\ell): {\rm {det}}(A)=t_1\}|~ |\{B\in {\rm GL}_2(\mathbb F_\ell): {\rm {det}}(B)=t_2\}|.
		\end{align*}
		Since 
		$$
		{\rm GL}_2(\mathbb F_\ell) = \bigcup\limits_{t\in \mathbb F_\ell^\times} \begin{pmatrix}
			t&0\\0&1
		\end{pmatrix} {\rm SL}_2(\mathbb F_\ell),
	$$
	   it follows that for any fixed $t\in \mathbb F_\ell^{\times}$, the cardinality of the set
		$\{A\in {\rm GL}_2(\mathbb F_\ell): {\rm {det}}(A)=t\}$ does not depend on $t$ and is equal to
		$|{\rm SL}_2(\mathbb F_\ell)|=(\ell-1)(\ell^2+\ell)$. Thus
		\begin{align*}\label{ac1}
			| \mathscr{A}_{\ell} |  =(\ell-1)^2(\ell^2+\ell)^2 |\Lambda|.
		\end{align*}
		To compute the cardinality of $\Lambda$, consider the surjective group homomorphism 
		\begin{equation*}
			\phi:  \mathbb F_\ell^{\times}\rightarrow \Lambda 
			{\rm ~defined ~by} ~\phi(v)=  (v^{k_1-1},v^{k_2-1}).
		\end{equation*}
		By using the fact that $d=(\ell-1,k_1-1,k_2-1)$ one can easily see that ${\rm{Ker}}(\phi)=\{v \in \mathbb F_\ell^{\times}: v^{d}=1\}$, a cyclic subgroup of $\mathbb F_\ell^\times$ of order $d$. Therefore
		\begin{equation}\label{Lambda_cardinality}
			|\Lambda|=\frac{1}{d}(\ell-1)
		\end{equation}
		and this completes the proof.
	\end{proof}
	
	\begin{lemma}\label{C_l_computation}
		For any prime $\ell > M$,
		$$ | \mathscr{C}_{\ell} | =\frac{1}{d}\ell^2(\ell-1)(\ell^2+1).$$
		
	\end{lemma}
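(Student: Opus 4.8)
\textit{Proof proposal for Lemma~\ref{C_l_computation}.}

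The plan is to rerun the proof of Lemma~\ref{A_l_computation} while also recording the trace. For $\ell>M$, \eqref{image} together with \eqref{image_C} identifies $\mathscr{C}_\ell$ with the set of pairs $(A,B)\in \mathrm{GL}_2(\F_\ell)\times \mathrm{GL}_2(\F_\ell)$ satisfying $\mathrm{tr}(A)=\mathrm{tr}(B)=0$ and $(\det A,\det B)\in\Lambda$, the group of \eqref{Lambda}. Splitting according to the pair of determinants, we get
\begin{equation*}
|\mathscr{C}_\ell|=\sum_{(t_1,t_2)\in\Lambda}M(t_1)\,M(t_2),\qquad M(t):=\bigl|\{A\in \mathrm{GL}_2(\F_\ell):\ \mathrm{tr}(A)=0,\ \det(A)=t\}\bigr|,
\end{equation*}
so the first task is to compute $M(t)$ for $t\in\F_\ell^\times$; we may assume $\ell$ is odd, enlarging $M$ if necessary (note that the asserted formula already fails for $\ell=2$).

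For this I would write a trace-zero matrix as $\medmatrix{a}{b}{c}{-a}$, so that its determinant equals $-a^2-bc$ and, since $t\neq0$, invertibility is automatic; thus $M(t)$ is the number of triples $(a,b,c)\in\F_\ell^3$ with $bc=-a^2-t$. For each fixed $a$ the equation $bc=s$ has $\ell-1$ solutions when $s\neq0$ and $2\ell-1$ solutions when $s=0$, while $s=-a^2-t$ vanishes for exactly $1+\chi(-t)$ values of $a$, where $\chi$ is the Legendre symbol modulo $\ell$ (and $-t\neq0$). A one-line calculation then gives
\begin{equation*}
M(t)=\ell\bigl(\ell+\chi(-t)\bigr).
\end{equation*}
(Equivalently, $M(t)$ is the size of the conjugacy class in $\mathrm{GL}_2(\F_\ell)$ with characteristic polynomial $X^2+t$, which one reads off from the order of the centralizer of a split, resp.\ non-split, regular semisimple element.)

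Finally, exactly as in the proof of Lemma~\ref{A_l_computation}, the map $v\mapsto(v^{k_1-1},v^{k_2-1})$ is a surjection $\F_\ell^\times\to\Lambda$ all of whose fibres have cardinality $d$, so
\begin{equation*}
|\mathscr{C}_\ell|=\frac1d\sum_{v\in\F_\ell^\times}M(v^{k_1-1})\,M(v^{k_2-1})=\frac{\ell^2}{d}\sum_{v\in\F_\ell^\times}\bigl(\ell+\chi(-v^{k_1-1})\bigr)\bigl(\ell+\chi(-v^{k_2-1})\bigr).
\end{equation*}
Since the forms have trivial nebentypus, $k_1$ and $k_2$ are even, so each $k_i-1$ is odd and $\chi(-v^{k_i-1})=\chi(-1)\chi(v)^{k_i-1}=\chi(-1)\chi(v)$; using $\chi(-1)^2=1$ and $\chi(v)^2=1$ on $\F_\ell^\times$, the general term becomes $\bigl(\ell+\chi(-1)\chi(v)\bigr)^2=\ell^2+1+2\ell\,\chi(-1)\chi(v)$. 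Summing over $v$ and invoking $\sum_{v\in\F_\ell^\times}\chi(v)=0$ leaves $(\ell-1)(\ell^2+1)$, whence $|\mathscr{C}_\ell|=\frac1d\,\ell^2(\ell-1)(\ell^2+1)$, as claimed. Everything here is elementary; the only genuine input is the evenness of the weights, which is precisely what forces the odd-power character sums to vanish, so there is no real obstacle — the ``hard part'' is just the careful bookkeeping of the $s=0$ case in $M(t)$ and of the $d$-to-one parametrization, both of which already appear in the preceding lemma.
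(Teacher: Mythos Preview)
Your proof is correct and follows essentially the same route as the paper's: both decompose $|\mathscr{C}_\ell|$ as $\sum_{(t_1,t_2)\in\Lambda}M(t_1)M(t_2)$, compute $M(t)=\ell(\ell+\chi(-t))$ (the paper states this as the two cases $\ell^2\pm\ell$ according as $-t$ is a residue or not), and then exploit the evenness of the weights to make the two factors depend identically on the parameter. The only cosmetic difference is that you finish via the character-sum identity $\sum_{v}\chi(v)=0$, whereas the paper phrases the same step as the surjectivity of the homomorphism $\Lambda\to\{\pm1\}$, $(t_1,t_2)\mapsto\bigl(\tfrac{t_1}{\ell}\bigr)$, to split $\Lambda$ into equal residue and non-residue halves.
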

	
	\begin{proof}
		From the definition of $\mathscr{C}_{\ell}$, we can write
		\begin{align}\label{clc1}
			|\mathscr{C}_{\ell}|
			%&= \sum_{(t_1,t_2)\in \Lambda} |\{(A, B)\in {\rm GL}_2(\F_\ell)\times {\rm GL}_2(\F_\ell): ({\rm det}(A), {\rm det} (B))= (t_1,t_2), {\rm tr}(A)=0, {\rm tr}(B)=0\}| \notag\\
			=  \sum_{(t_1,t_2)\in \Lambda} |\{A\in {\rm GL}_2(\mathbb F_\ell): {\rm {det}}(A)=t_1, {\rm tr}(A)=0\}| ~  |\{B\in {\rm GL}_2(\mathbb F_\ell): {\rm {det}}(B)=t_2, {\rm tr}(B)=0\}|,
		\end{align}
		where $\Lambda$ is defined by  \eqref{Lambda}. For $t \in\mathbb F_\ell^{\times}$, we can easily obtain the following equality by an elementary
		counting argument: 
		\begin{align}\label{id11}
			&|\{A\in {\rm GL}_2(\mathbb F_\ell): ~{\rm {det}}(A)=t, {\rm tr}(A)=0\}| =
			\begin{cases}
				\ell^2+\ell, & -t {\rm~is~ quadratic~ residue},\\
				\ell^2-\ell, & {\rm otherwise}. 
			\end{cases}
		\end{align}
		Let ${(t_1,t_2)\in \Lambda}$. Since $k_1$ and $k_2$ are even, we see that 
		$-t_1$  is a quadratic residue (respectively, non-residue) if and only if $-t_2$ is a quadratic residue (respectively, non-residue).   We split the sum on the right hand side of   \eqref{clc1} into two parts depending on
		$-t_1$ is a quadratic residue or not and obtain 
		\begin{align}\label{Cl2}
			| \mathscr{C}_{\ell} |=&(\ell^2+\ell)^2 \sum_{\substack{(t_1,t_2)\in \Lambda\\ -t_1:~ {\rm residue}}} 1
			+(\ell^2-\ell)^2 \sum_{\substack{(t_1,t_2)\in \Lambda\\ -t_1:~ {\rm non-residue}}}1.
		\end{align}
		%Let $i=1,2$. Since $k_i-1$ is odd, the order of the group $(\mathbb F_\ell^{\times})^{k_i-1}$ is even and exactly half  of its elements are quadratic residue while other half are quadratic non-residue. 
		Since the group homomorphism 
		$$
		\Lambda \rightarrow \{\pm 1 \} {\rm ~defined ~by} ~(t_1,t_2)\mapsto \left(\frac{t_1}{\ell}\right)
		$$
		is surjective, where $\left(\frac{\cdot}{\ell}\right)$ denotes the Legendre symbol, the subgroup consisting of quadratic residue elements of $\Lambda$ is of index two. Therefore,
		$$
		\sum_{\substack{(t_1,t_2)\in \Lambda\\ -t_1:~ {\rm residue}}} 1
		=\sum_{\substack{(t_1,t_2)\in \Lambda\\ -t_1:~ {\rm non-residue}}}1=\frac{|\Lambda|}{2}= \frac{1}{2d}(\ell -1),
		$$
		where we have used  \eqref{Lambda_cardinality} in the last equality. Substituting this in  \eqref{Cl2} gives the desired result.
	\end{proof}

	We record two simple consequences of the two  foregoing lemmas in the following proposition.
	\begin{proposition}\label{asymptotic_delta}
		For every prime $\ell > M$,
		\be\label{deltabd}
		\delta(\ell) \leq \frac{3}{\ell^2},
		\ee
		and as
		$\ell$ varies over
		primes,   
		\begin{equation*}
			\delta(\ell)\sim \frac{1}{\ell^2}\mathrm{ \ \  as \ \  } \ell \rightarrow \infty.
		\end{equation*}
	\end{proposition}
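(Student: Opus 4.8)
The plan is to obtain $\delta(\ell)$ in closed form by simply dividing the two cardinalities already established in Lemma~\ref{A_l_computation} and Lemma~\ref{C_l_computation}, and then to read off both assertions from elementary estimates on the resulting rational function of $\ell$. For any prime $\ell>M$ one has
\[
\delta(\ell)=\frac{|\mathscr{C}_{\ell}|}{|\mathscr{A}_{\ell}|}
=\frac{\frac1d\,\ell^2(\ell-1)(\ell^2+1)}{\frac1d\,(\ell-1)^3(\ell^2+\ell)^2}
=\frac{\ell^2+1}{(\ell^2-1)^2},
\]
where in the last step one cancels the common factor $\frac1d\,\ell^2(\ell-1)$ (so that, in particular, the exceptional–prime quantity $d$ drops out) and uses $(\ell^2+\ell)^2=\ell^2(\ell+1)^2$ together with $(\ell-1)^2(\ell+1)^2=(\ell^2-1)^2$.

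For the upper bound \eqref{deltabd}, since every quantity in sight is positive, the inequality $\delta(\ell)\le 3/\ell^2$ is equivalent to $\ell^2(\ell^2+1)\le 3(\ell^2-1)^2$, i.e.\ to $2\ell^4-7\ell^2+3\ge 0$. Regarding the left-hand side as a quadratic in $\ell^2$ and factoring it as $(2\ell^2-1)(\ell^2-3)$, the inequality holds for every prime $\ell$, because $\ell\ge 2$ forces $\ell^2\ge 4>3$, making both factors positive. For the asymptotic statement it then suffices to note that
\[
\ell^2\,\delta(\ell)=\frac{\ell^2(\ell^2+1)}{(\ell^2-1)^2}=\frac{1+\ell^{-2}}{(1-\ell^{-2})^2}\longrightarrow 1
\qquad\text{as }\ell\to\infty,
\]
which is precisely $\delta(\ell)\sim\ell^{-2}$.

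There is no genuine obstacle here: the argument is entirely mechanical once Lemmas~\ref{A_l_computation} and \ref{C_l_computation} are available. The only points deserving a moment's care are the cancellation of the common factors in the quotient $|\mathscr{C}_{\ell}|/|\mathscr{A}_{\ell}|$ (so that the closed form $\delta(\ell)=(\ell^2+1)/(\ell^2-1)^2$ is correct and $d$–free), and the check that the quadratic inequality $2\ell^4-7\ell^2+3\ge0$ is valid already at the smallest admissible prime $\ell=2$ and not merely for large $\ell$.
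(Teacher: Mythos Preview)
Your proof is correct and follows exactly the approach the paper intends: the proposition is stated there as a ``simple consequence'' of Lemmas~\ref{A_l_computation} and~\ref{C_l_computation}, and you have merely written out the division and the elementary verification explicitly. The closed form $\delta(\ell)=(\ell^2+1)/(\ell^2-1)^2$ and the factorization $(2\ell^2-1)(\ell^2-3)$ are both correct, so nothing further is needed.
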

	
	\subsection{Multiplicativity of $\delta(m)$}
	From the explicit descriptions of $\mathscr A_{\ell}$ and $\mathscr C_{\ell}$ for large primes $\ell$ given in  \eqref{image} and  \eqref{image_C},  it is clear that (see, e.g., \cite[Lemma 5.4]{mm})
	if $f_1$ and $f_2$ are two eigenforms as before then the following result holds:
	\begin{proposition}\label{delta_multiplicative_large_prime}
		For all  primes $\ell_1, \ell_2 >M$ with $\ell_1 \neq \ell_2$ and any positive integers $n_1$ and $n_2$, we have
		$$
		\delta(\ell_1^{n_1} \ell_2^{n_2})=\delta(\ell_1^{n_1}) \delta(\ell_2^{n_2}).
		$$
	\end{proposition}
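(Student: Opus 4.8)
The plan is to reduce the proposition to a single structural fact about Galois images. Write $m=\ell_1^{n_1}\ell_2^{n_2}$, set $m_i=\ell_i^{n_i}$, and identify $\mathrm{GL}_2(\Z/m\Z)$ with $\mathrm{GL}_2(\Z/m_1\Z)\times\mathrm{GL}_2(\Z/m_2\Z)$ through the Chinese Remainder Theorem. The claim I would establish is
\[
\mathscr{A}_m=\mathscr{A}_{m_1}\times\mathscr{A}_{m_2}.
\]
Granting this, the rest is bookkeeping: under the above identification trace and determinant are computed componentwise and an element of $\Z/m\Z$ vanishes if and only if both of its components do, so the conditions defining $\mathscr{C}_m$ decouple and $\mathscr{C}_m=\mathscr{C}_{m_1}\times\mathscr{C}_{m_2}$; dividing cardinalities then gives $\delta(m)=|\mathscr{C}_m|/|\mathscr{A}_m|=\delta(m_1)\delta(m_2)$ by \eqref{delta_definition}.

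For the factorization itself, one inclusion is immediate from \eqref{image}: since $\ell_1,\ell_2>M$, each $\mathscr{A}_{m_i}$ is precisely the group of pairs $(A,B)$ whose determinants form $(v^{k_1-1},v^{k_2-1})$ for a common $v\in(\Z/m_i\Z)^\times$, and $\mathscr{A}_m$ always lies inside the analogous set at level $m$, which under CRT is exactly $\mathscr{A}_{m_1}\times\mathscr{A}_{m_2}$. For the reverse inclusion I would use that $L_m=L_{m_1}L_{m_2}$, so that $\mathscr{A}_m=\mathrm{Gal}(L_m/\Q)$ is the fibre product of $\mathscr{A}_{m_1}$ and $\mathscr{A}_{m_2}$ over $Q:=\mathrm{Gal}\bigl((L_{m_1}\cap L_{m_2})/\Q\bigr)$ (equivalently, apply Goursat's lemma to $\mathscr{A}_m\le\mathscr{A}_{m_1}\times\mathscr{A}_{m_2}$, whose two projections are surjective because they are the reduction maps). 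Hence everything comes down to showing $L_{m_1}\cap L_{m_2}=\Q$, i.e.\ that $Q$ is trivial.

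This independence statement is where the real work is. Enlarging $M$ if necessary, we may assume $M\ge 3$, so that every prime $>M$ is at least $5$; then I would read off from \eqref{image} that $[\mathscr{A}_{m_i},\mathscr{A}_{m_i}]=\mathrm{SL}_2(\Z/m_i\Z)\times\mathrm{SL}_2(\Z/m_i\Z)$: commutators have trivial determinant, and conversely $\mathscr{A}_{m_i}$ contains this subgroup (take the scalar $v=1$), which is perfect because $\mathrm{SL}_2(\Z/\ell^n\Z)$ is perfect for $\ell\ge 5$. Since this commutator subgroup is perfect, every solvable quotient of $\mathscr{A}_{m_i}$ is abelian, hence a quotient of $\mathscr{A}_{m_i}^{\mathrm{ab}}\cong\{(v^{k_1-1},v^{k_2-1}):v\in(\Z/m_i\Z)^\times\}$; and the maximal abelian subextension of $L_{m_i}/\Q$, being realised by the determinant characters $\mathrm{Frob}_p\mapsto(p^{k_1-1},p^{k_2-1})\bmod m_i$, lies inside $\Q(\zeta_{m_i})$. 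Moreover the only nonabelian composition factor of $\mathscr{A}_{m_i}$ is $\mathrm{PSL}_2(\F_{\ell_i})$ (its composition factors are those of $\mathrm{SL}_2(\Z/m_i\Z)$, taken twice, together with abelian ones). Now $Q$ is a quotient of both $\mathscr{A}_{m_1}$ and $\mathscr{A}_{m_2}$; since $\mathrm{PSL}_2(\F_{\ell_1})$ and $\mathrm{PSL}_2(\F_{\ell_2})$ are non-isomorphic (different orders, as $\ell_1\ne\ell_2$), $Q$ has no nonabelian composition factor, so it is solvable, hence abelian, hence a quotient of each $\mathscr{A}_{m_i}^{\mathrm{ab}}$. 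Therefore $L_{m_1}\cap L_{m_2}$ is abelian over $\Q$ and lies in $\Q(\zeta_{m_1})\cap\Q(\zeta_{m_2})=\Q$, so $Q$ is trivial and $\mathscr{A}_m=\mathscr{A}_{m_1}\times\mathscr{A}_{m_2}$. I expect this step --- excluding an ``accidental'' common quotient of the two Galois images --- to be the main obstacle; it is the standard independence-of-$\ell$ phenomenon for modular Galois representations (cf.\ \cite[Lemma~5.4]{mm}), and the remaining ingredients, including the cardinality computations of Lemmas \ref{A_l_computation}--\ref{C_l_computation}, are routine by comparison.
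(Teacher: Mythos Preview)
Your argument is correct and, in spirit, matches the paper's: the paper gives no detailed proof here but simply declares the result ``clear'' from the explicit descriptions \eqref{image}, \eqref{image_C} together with a reference to \cite[Lemma~5.4]{mm}. Your Goursat/composition-factor argument for $L_{m_1}\cap L_{m_2}=\Q$ is precisely the kind of independence-of-$\ell$ lemma that citation stands in for, so you have supplied the omitted details rather than taken a genuinely different route.

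One shortcut worth noting: Loeffler's result \cite[Thm.~3.2.2]{loe}, which the paper invokes to obtain \eqref{image}, is in fact a statement about openness of the \emph{adelic} image. Openness already forces, for every $m$ supported on primes $>M$ (not only prime powers), that $\mathscr{A}_m$ equals the full group $\{(A,B)\in{\rm GL}_2(\Z/m\Z)^2:\det A=v^{k_1-1},\ \det B=v^{k_2-1},\ v\in(\Z/m\Z)^\times\}$. Under the Chinese Remainder Theorem this set visibly splits as $\mathscr{A}_{m_1}\times\mathscr{A}_{m_2}$, and the same holds for $\mathscr{C}_m$, so the multiplicativity of $\delta$ on such $m$ follows at once without any field-disjointness argument. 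Both the paper and you instead extract only the prime-power consequence from Loeffler and then re-establish the product structure across distinct primes by hand; either approach is valid.
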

	\noindent
	However, in the following we prove that if the forms are of level 1, then the function $\delta$ is multiplicative on the entire set of positive integers.  This result is of independent interest and it may be useful in other investigations.
	\begin{proposition}\label{delta_multiplicicative}
		Let $f_1$ and $f_2$ be two normalized eigenforms of level $1$ and both have rational integral
		coefficients. Then $m \mapsto \delta(m)$ is multiplicative; i.e., if $m=m_1 m_2$ and 
		$(m_1, m_2)=1$, then 
		$$\delta(m)=\delta(m_1)\delta(m_2).$$
	\end{proposition}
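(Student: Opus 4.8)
The plan is to reduce everything, via the Chinese Remainder Theorem, to a single arithmetic input: that the number fields cut out by the mod-$m_1$ and mod-$m_2$ representations are linearly disjoint over $\Q$. Write $m=m_1m_2$ with $(m_1,m_2)=1$; since $\delta(1)=1$ we may assume $m_1,m_2>1$.

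First I would unwind the definitions through the CRT isomorphism $\Z/m\Z\cong\Z/m_1\Z\times\Z/m_2\Z$, which induces $\mathrm{GL}_2(\Z/m\Z)\cong\mathrm{GL}_2(\Z/m_1\Z)\times\mathrm{GL}_2(\Z/m_2\Z)$; under it trace and determinant are computed componentwise and coincide with reduction modulo $m_1$ and $m_2$. Hence $\bar\rho_{f_i,m}$ is identified with $(\bar\rho_{f_i,m_1},\bar\rho_{f_i,m_2})$, so $\bar\rho_m$ with $(\bar\rho_{m_1},\bar\rho_{m_2})$ where $\bar\rho_{m_j}=(\bar\rho_{f_1,m_j},\bar\rho_{f_2,m_j})$. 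It follows that $H_m=H_{m_1}\cap H_{m_2}$, so $L_m=L_{m_1}L_{m_2}$, and that $\mathscr A_m$ is a subgroup of $\mathscr A_{m_1}\times\mathscr A_{m_2}$ projecting onto each factor.

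Next I would establish the linear disjointness. Here the level-$1$ hypothesis enters: by Deligne's theorem, for level $1$ the $\ell$-adic representation $\rho_{f_i,\ell}$ is unramified outside $\ell$, hence $\bar\rho_{f_i,m_j}$ is unramified outside the primes dividing $m_j$, and therefore $L_{m_j}/\Q$ is ramified only at primes dividing $m_j$. Since $(m_1,m_2)=1$, every prime that ramifies in $L_{m_1}\cap L_{m_2}$ would ramify in both $L_{m_1}$ and $L_{m_2}$, hence divide $(m_1,m_2)=1$; so $L_{m_1}\cap L_{m_2}$ is unramified at every prime of $\Q$, and by Minkowski's theorem it equals $\Q$. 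Then $L_{m_1}$ and $L_{m_2}$ are linearly disjoint, and using \eqref{image_isomorphism},
\[
\mathscr A_m=\mathrm{Gal}(L_{m_1}L_{m_2}/\Q)\cong\mathrm{Gal}(L_{m_1}/\Q)\times\mathrm{Gal}(L_{m_2}/\Q)\cong\mathscr A_{m_1}\times\mathscr A_{m_2},
\]
the isomorphism being the inclusion from the previous step; in particular $|\mathscr A_m|=|\mathscr A_{m_1}|\,|\mathscr A_{m_2}|$.

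Finally, I would transfer the trace condition: under the same identification an element of $\mathscr A_m$ corresponds to $\bigl((A_1,B_1),(A_2,B_2)\bigr)$ with $(A_j,B_j)\in\mathscr A_{m_j}$, and by \eqref{definition_C_m} it lies in $\mathscr C_m$ if and only if $\mathrm{tr}(A_j)\equiv\mathrm{tr}(B_j)\equiv 0 \pmod{m_j}$ for $j=1,2$, i.e.\ if and only if $(A_j,B_j)\in\mathscr C_{m_j}$ for both $j$. Thus $\mathscr C_m\cong\mathscr C_{m_1}\times\mathscr C_{m_2}$ and $|\mathscr C_m|=|\mathscr C_{m_1}|\,|\mathscr C_{m_2}|$, whence $\delta(m)=|\mathscr C_m|/|\mathscr A_m|=\delta(m_1)\delta(m_2)$. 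The only non-formal ingredient is the linear disjointness, and I expect that to be the crux: it is precisely the level-$1$ assumption that makes $L_{m_j}$ unramified away from $m_j$ so that Minkowski's theorem applies. For higher level the fields $L_{m_1}$ and $L_{m_2}$ can share ramification at primes dividing $N_1N_2$, which is why Proposition \ref{delta_multiplicative_large_prime} only yields multiplicativity for large primes.
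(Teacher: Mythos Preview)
Your proof is correct and follows essentially the same route as the paper: reduce via CRT, use the level-$1$ hypothesis to see that $L_{m_j}$ is ramified only at primes dividing $m_j$, conclude $L_{m_1}\cap L_{m_2}=\Q$ (the paper states this without naming Minkowski, but that is the implicit justification), deduce $\mathscr{A}_m\cong\mathscr{A}_{m_1}\times\mathscr{A}_{m_2}$, and then check that the trace-zero condition factors to give $\mathscr{C}_m\cong\mathscr{C}_{m_1}\times\mathscr{C}_{m_2}$. Your closing remark about why the argument breaks down for higher level is exactly the content of the paper's Remark~\ref{A_C_upper}.
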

	\begin{proof}
		From the definition of the function $\delta$, it is sufficient to show that as a function of $m$,
		$|\mathscr A_m|$ and $|\mathscr C_m|$ are multiplicative. 
		We first show that the function
		$m \mapsto |\mathscr{A}_m|$ is multiplicative. By \eqref{image_isomorphism} and the 
		fundamental theorem of 
		Galois theory, we know that
		\begin{equation}\label{ah}
			|\mathscr{A}_m|= [L_m:\Q].
		\end{equation}
		For $i=1,2$, $\bar{\rho}_{m_i}$ is ramified only at the primes dividing $m_i$ (since the level is $1$), and therefore,
		it follows that $L_{m_i}$ is ramified only at the primes dividing $m_i$. Therefore,
		\begin{equation}\label{lh1}
			L_{m_1}\cap L_{m_2} =\Q.
		\end{equation}
		Now, from the definition of mod-$m$ representations, we have $H_{m_1 m_2}=H_{m_1} \cap H_{m_2}$ and
		it easily follows that  $L_m$
		%by the kernel of the representation $\bar{\rho}_{h}$
		is the compositum of $L_{m_1}$ and $L_{m_2}$. Therefore  \eqref{ah} yields
		\begin{equation}\label{ah_multiplicative}
			|\mathscr{A}_m|= [L_{m_1}L_{m_2}:\Q]=
			\frac{[L_{m_1}:\Q] [L_{m_2}:\Q]}{[L_{m_1}\cap L_{m_2}:\Q]}
		\end{equation}
		and using   \eqref{lh1}, we obtain
		\begin{equation}\label{ahmul}
			|\mathscr{A}_m|=|\mathscr{A}_{m_1}| |\mathscr{A}_{m_2}|.
		\end{equation}
		Next, note that the natural reduction map 
		$G(\Z/m\Z)\rightarrow G(\Z/m_1\Z ) \times G(\Z/m_2\Z)$ is an isomorphism, 
		where $G(R)$ denotes ${\rm GL}_2 (R) \times {\rm GL}_2 (R)$
		for any commutative ring $R$. Let $\psi$ be the 
		restriction of the above map to $\mathscr{A}_{m}\subset G(\Z/m\Z)$. We see that the image of the  map $\psi$ lies in 
		$\mathscr{A}_{m_1}\times \mathscr{A}_{m_2}$ and the map
		$$\psi:\mathscr{A}_{m} \rightarrow  \mathscr{A}_{m_1}\times \mathscr{A}_{m_2}$$
		is an injection and hence, by  \eqref{ahmul}, it is an isomorphism. 
		If we further restrict $\psi$ to $\mathscr{C}_m$, then it is easy to see that it gives a bijection of sets
		\begin{equation}\label{ch_multiplicative}
			\mathscr{C}_m \rightarrow \mathscr{C}_{m_1} \times \mathscr{C}_{m_2}.
		\end{equation}
		%This is simply because an integer $n \equiv 0 (\textnormal{mod } m_1 m_2)$ if and only if  $n \equiv 0 (\textnormal{mod } m_1)$ and $n \equiv 0 (\textnormal{mod } m_2)$
		This completes the proof of the proposition.
	\end{proof}

	\begin{remark}\label{A_C_upper}
		In the case of higher level $N> 1$, there can be primes dividing the respective levels where the mod-$m$ representations are ramified 
		and hence the conclusion that $L_{m_1}\cap L_{m_2}=\Q$ will be false in general.  We can say, however, that the map $\psi$ is an injective homomorphism when restricted to $\mathscr{A}_{m}$ and is an injective map of sets 
		when restricted to $\mathscr{C}_{m}$ and thus we can conclude that if $(m_1, m_2)=1$,
		\be\label{submult}
		|\mathscr{A}_{m_1 m_2}|\leq |\mathscr{A}_{m_1}| | \mathscr{A}_{m_2}|  \textnormal{ and  }  |\mathscr{C}_{m_1 m_2}|\leq |\mathscr{C}_{m_1}| | \mathscr{C}_{m_2}|.
		\ee
		Also, note that for $i=1,2$, if $\psi_i: G(\Z/m_1 m_2\Z)\rightarrow G(\Z/m_i\Z)$ is the natural projection map then $\psi_i(\mathscr{A}_{m_1 m_2})=\mathscr{A}_{m_i}$ and 
		$\psi_i(\mathscr{C}_{m_1 m_2})=\mathscr{C}_{m_i}$. Therefore,
		\be\label{ulb}
		|\mathscr{A}_{m_1 m_2}|\geq |\mathscr{A}_{m_i}| \textnormal{ and  }  |\mathscr{C}_{m_1 m_2}|\geq |\mathscr{C}_{m_i}|.
		\ee
		%\textcolor{red}{ (we can delete!) Clearly,  for mod-$m$ Galois representations associated to a single eigenform, one can prove results analogous to Prop. \ref{delta_multiplicative_large_prime},
		%Prop.  \ref{delta_multiplicicative}, \eqref{submult}, and \eqref{ulb}. The assertions in the first three lines of \cite[pg. 3]{gm} should not hold in general.} 
	\end{remark}

	\subsection{Sizes of  $\mathscr{A}_{m}$ and  $\mathscr{C}_{m}$ for a general integer $m$}
	By  calculations as in \S \ref {delta-asymp}, one can show that
	\begin{equation}\label{size_A_C}
		|\mathscr{A}_{\ell^n}| \ll  \ell^{7n} {\rm ~~and~~} |\mathscr{C}_{\ell^n}| \ll \ell^{5n}
	\end{equation}
	for every prime $\ell$ and every integer $n\geq 1$; and for $\ell$ large enough, we have
	\[
	|\mathscr{A}_{\ell^n}| \asymp \ell^{7n} {\rm ~~and~~} |\mathscr{C}_{\ell^n}| \asymp \ell^{5n}.
	\]
	Now let $m$  be any  positive integer. By considering its prime factorization and applying the first bounds in
	\rmkref{A_C_upper}, we obtain from \eqref{size_A_C}  the following bounds: 
	\begin{equation}\label{ch_cardinality}
		|\mathscr{A}_m| \ll m^7 {\rm ~~and~~} |\mathscr{C}_m| \ll m^5.
	\end{equation}

	\section{Asymptotic formula for $\pi_{f_1,f_2}(x,m)$ and $\pi_{f_1,f_2}^*(x,m)$}
	Let $f_1$ and $f_2$  be two non-CM eigenforms as in the previous section.
	For a positive integer $m$
	and a real number $x\ge 2$, define
	\begin{equation}\label{pi_definition}
		\pi_{f_1,f_2}(x,m):=\sum_{\substack{p\le x, (p, m N)=1 \\ m|(a_1 (p),a_2(p))}}1.
	\end{equation}
	To obtain an asymptotic formula for $\pi_{f_1,f_2}(x,m)$, our aim is to apply CDT
	for the finite Galois extension $L_m/\Q$, $L_m$ being the fixed field of the kernel of the mod-$m$ representation
	$\bar\rho_m=(\bar{\rho}_{f_1,m}, \bar{\rho}_{f_2,m})$.
	Clearly, the representation $\bar\rho_m$ is unramified at a prime $p$ such that  $(p, mN)=1$, where $N={lcm}(N_1,N_2)$, and hence  $p$ is unramified in $L_m$.  Moreover, for such a prime $p$
	$$ 
	{\rm tr}\left(\bar{\rho}_{m}({\rm Frob}_p)\right)\equiv (a_1 (p) \pmod m,~ a_2(p)\pmod m).
	$$
	Thus, we can write
	\begin{equation}\label{pi_definition1}
		\pi_{f_1,f_2}(x,m)
		= |\{p\le x: p {\rm ~unramified~in~} L_m, \bar{\rho}_m\left({\rm Frob}_p \right) \in \mathscr{C}_m \}| + O(1),
	\end{equation}
	where $\mathscr{C}_m$ is defined by  \eqref{definition_C_m} and  term $O(1)$ is to account for  the presence of  possible prime divisors of $N$ at which $\bar\rho_m$ is unramified. Note that  $\bar\rho_m$ is ramified at primes $p|m$ because a non-trivial power of the mod $p$ cyclotomic character is a component of its determinant, which is ramified at $p$. Now we state the main result of this section. 
	\begin{proposition}\label{asymptotic_pi}
		Let $f_1\in S_{k_1}(N_1)$ and $f_2\in S_{k_2}(N_2)$ be as before.  Let $N={lcm}(N_1,N_2)$ and  $m\ge1$ be an integer. 	Then we have:
		\begin{enumerate}
			\item[(a)]
			For  $\log x \gg m^{21}(\log (mN))^2$ 
			\begin{equation}\label{piprop}
				\pi_{f_1,f_2}(x,m)  = \delta(m)  {\rm Li}(x) +O\left( \delta(m) {\rm Li}(x^\beta)\right)+O\left( m^5x ~{\rm exp}\left(-c'\sqrt{\frac{\log x}{m^7}}\right) \right),
			\end{equation}
			where $c'>0$ is an effectively computable constant.
			\item[(b)]
			Under GRH, for any $x \geq 2$, we have
			\begin{equation}\label{evl}
				\pi_{f_1,f_2}(x,m)=\delta(m){\pi(x)}+O\left(m^5x^{{1}/{2}} \log(m Nx)\right).
			\end{equation}
		\end{enumerate}
		Here the $O$-constants are absolute in both {\rm (a)} and {\rm (b)}. 
	\end{proposition}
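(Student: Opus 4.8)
The plan is to deduce both parts directly from the Chebotarev Density Theorem applied to the finite Galois extension $L_m/\Q$, using the reformulation \eqref{pi_definition1} which expresses $\pi_{f_1,f_2}(x,m)$ as a Frobenius-counting function for the conjugation-stable subset $\mathscr{C}_m\subset \mathscr{A}_m\cong \mathrm{Gal}(L_m/\Q)$, up to an $O(1)$ error. The two inputs we must feed into CDT are (i) the size ratio $|\mathscr{C}_m|/|\mathscr{A}_m|=\delta(m)$, which is exactly the density appearing in the main term, and (ii) bounds on the two invariants of $L_m$ that control the error: the degree $n_{L_m}=|\mathscr{A}_m|$ and the discriminant $d_{L_m}$.

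For part (a), I would invoke the unconditional effective CDT \eqref{cdt} with $C=\mathscr{C}_m$. The main term is $\frac{|\mathscr{C}_m|}{|\mathscr{A}_m|}\mathrm{Li}(x)=\delta(m)\mathrm{Li}(x)$, and the possible Siegel-zero contribution is $\leq \delta(m)\mathrm{Li}(x^\beta)$, matching the first error term in \eqref{piprop}. For the unconditional error term, I use $n_{L_m}=|\mathscr{A}_m|\ll m^7$ from \eqref{ch_cardinality}, and $\|\mathscr{C}_m\|\leq|\mathscr{C}_m|\ll m^5$ from the same display, giving the shape $m^5 x\,\exp(-c'\sqrt{\log x/m^7})$. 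The hypothesis $\log x\gg m^{21}(\log(mN))^2$ is there to satisfy the admissibility condition $\log x\geq c_1 n_{L_m}(\log d_{L_m})^2$ of CDT: one needs $\log d_{L_m}\ll m^{7}\log(mN)$ (up to constants), which follows from the conductor-discriminant formula together with the fact that $L_m$ is ramified only at primes dividing $mN$ and that the tame/wild ramification at each such prime is controlled by the dimension $\ll m^7$ and by bounded powers; combining with $n_{L_m}\ll m^7$ gives $n_{L_m}(\log d_{L_m})^2\ll m^7\cdot m^{14}(\log(mN))^2=m^{21}(\log(mN))^2$. The bound \eqref{beta_stark} on $\beta$ is available if one later wants to absorb the Siegel term, but for the statement as written it is simply carried along.

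For part (b), under GRH for $\zeta_{L_m}(s)$, I apply the conditional CDT \eqref{effective_cdt} with $C=\mathscr{C}_m$: the main term is $\delta(m)\pi(x)$ and the error is $O\!\left(\frac{|\mathscr{C}_m|}{|\mathscr{A}_m|}x^{1/2}(\log d_{L_m}+n_{L_m}\log x)\right)$. Using $\delta(m)\leq 1$, $n_{L_m}\ll m^7$, and $\log d_{L_m}\ll m^7\log(mN)$, the error is $\ll m^7 x^{1/2}\log(mNx)$; to get the cleaner exponent $m^5$ stated in \eqref{evl} one keeps the factor $\delta(m)\ll m^5/(\text{something})$ — more precisely, one notes $|\mathscr{C}_m|/|\mathscr{A}_m|\cdot n_{L_m}\ll |\mathscr{C}_m|\ll m^5$ and $|\mathscr{C}_m|/|\mathscr{A}_m|\cdot \log d_{L_m}\ll m^5\log(mN)$, so the whole error collapses to $m^5 x^{1/2}\log(mNx)$. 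Finally, the $O(1)$ from \eqref{pi_definition1} and the switch from $\mathrm{Li}(x)$ to $\pi(x)$ under GRH (costing $O(x^{1/2}\log x)$, absorbed) are harmless.

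The main obstacle is the discriminant estimate $\log d_{L_m}\ll m^7\log(mN)$: this is the only place where genuine arithmetic geometry enters beyond the group-theoretic cardinality bounds. One must argue carefully that $L_m$ is unramified outside $mN$ (ramification at $\ell\mid m$ comes from the mod-$\ell$ cyclotomic character in the determinant, and at $\ell\mid N$ from the bad reduction of the forms), and then bound the exponent of each ramified prime in $d_{L_m}$ in terms of $n_{L_m}\ll m^7$ — the wild part requires a bound on the ramification filtration, which for these mod-$m$ representations is standard (e.g. the different exponent at $\ell$ is $\ll n_{L_m}(1+v_\ell(m))$). Once this is in hand, everything else is bookkeeping with \eqref{ch_cardinality} and the two versions of CDT recalled in \S\ref{S_basic}.
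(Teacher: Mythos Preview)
Your proposal is correct and follows essentially the same approach as the paper: reformulate $\pi_{f_1,f_2}(x,m)$ via \eqref{pi_definition1} as a Frobenius-counting problem for the conjugacy-stable set $\mathscr{C}_m\subset\mathscr{A}_m\cong\mathrm{Gal}(L_m/\Q)$, then apply the two versions of CDT with the cardinality bounds \eqref{ch_cardinality}. Your bookkeeping for the error terms, in particular the observation $\tfrac{|\mathscr{C}_m|}{|\mathscr{A}_m|}\cdot n_{L_m}=|\mathscr{C}_m|\ll m^5$, is exactly what the paper uses.

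The one place you overcomplicate matters is the discriminant bound, which you flag as the ``main obstacle'' requiring a hands-on analysis of the ramification filtration. In the paper this is a one-line citation: Hensel's inequality (see \cite[Prop.~5, p.~129]{ser}) gives directly
\[
\log d_{L_m}\;\le\; n_{L_m}\Big(\log n_{L_m}+\sum_{p\mid mN}\log p\Big)\;\le\;|\mathscr{A}_m|\log(mN|\mathscr{A}_m|),
\]
which with $|\mathscr{A}_m|\ll m^7$ yields $\log d_{L_m}\ll m^7\log(mN)$ immediately. No separate control of wild ramification is needed. This also explains cleanly the admissibility threshold $\log x\gg m^{21}(\log(mN))^2$ in part~(a), since $n_{L_m}(\log d_{L_m})^2\ll m^7\cdot(m^7\log(mN))^2$.
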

	\begin{proof}
		Note that the map $\bar{\rho}_m$ descends to an isomorphism $\tilde{\rho}_m: G \xrightarrow{\sim} \mathscr{A}_m$, where $G$ denotes
		$\textnormal{Gal}(L_m/\Q)$. We take $C$ to be the subset
		\[
		C:=\{\sigma \in G: \tilde{\rho}_m (\sigma) \in \mathscr{C}_m\},
		\]
		of $G$ which is clearly conjugacy-invariant. Now we apply  CDT (see  \eqref{cdt} and \eqref{effective_cdt}) to obtain 
		the two statements above.  Note that the number of conjugacy classes in $C$ is at most $|C|=|\mathscr{C}_m|$. 
		Here we have used the fact that $n_{L_m} \ll m^7$. This follows  from combining  \eqref{ah} and \eqref{ch_cardinality}. We have also used  a consequence of an inequality of Hensel (see \cite[Prop. 5, p. 129]{ser}) that says,
		\begin{equation}\label{Hensel}
			\log d_{L_m}\le |\mathscr{A}_m| \log(mN |\mathscr{A}_m|).
		\end{equation}
	\end{proof}

	Next we define
	\begin{equation*}
		\pi_{f_1,f_2}^*(x,m)=|\{p\le x:  a_1(p) a_2(p)\ne 0,~
		a_1(p)\equiv a_2(p)\equiv 0 \pmod m\}|.
	\end{equation*}
	It is well known (see \cite[p. 175]{ser}) that 
	\begin{equation}\label{zero_coefficient}
		|\{p \le x: a_i(p)=0  \}| =\begin{cases}
			O\left( \frac{x}{(\log x)^{3/2-\ve}}\right), & {\rm ~for ~any~} \ve >0,\\
			O(x^{3/4}), & {\rm ~under ~GRH};
		\end{cases}
	\end{equation}
	and hence using Prop. \ref{asymptotic_pi} we conclude the following.
	\begin{proposition}\label{asymptotic_pi*}
		Let the assumptions be as in Prop. \ref{asymptotic_pi}.
		Then
		\begin{enumerate}
			\item[(a)]
			for  $\log x \gg m^{21}(\log (mN))^2$ 
			\begin{equation*}
				\pi_{f_1,f_2}^*(x,m)=\delta(m) {\rm Li}(x) +O\left(\delta(m) {\rm Li}(x^\beta)\right) +O\left( \frac{x}{(\log x)^{3/2-\ve}}\right), 
			\end{equation*}
			for any small $\ve>0$.
			\item[(b)]
			Under GRH, we have 
			\begin{equation*}\label{me}
				\pi_{f_1,f_2}^*(x,m)=\delta(m)\pi(x)+O\left(m^5x^{{1}/{2}}\log (mNx)\right)+O(x^{{3}/{4}}).
			\end{equation*}
		\end{enumerate}
		Here the $O$-constants are absolute in both {\rm (a)} and {\rm (b)}. 
	\end{proposition}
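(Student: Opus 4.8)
The plan is to obtain Proposition~\ref{asymptotic_pi*} as a direct consequence of Proposition~\ref{asymptotic_pi}, by reconciling the side conditions that distinguish the two counting functions. Indeed, $\pi_{f_1,f_2}^*(x,m)$ and $\pi_{f_1,f_2}(x,m)$ differ only because the latter imposes $(p,mN)=1$, which the former omits, and the former imposes $a_1(p)a_2(p)\neq 0$, which the latter omits. Accordingly, I would first write
\[
\pi_{f_1,f_2}^*(x,m)=\pi_{f_1,f_2}(x,m)+E_1(x,m)-E_2(x,m),
\]
where $E_1(x,m)$ counts the primes $p\le x$ with $p\mid mN$, $a_1(p)a_2(p)\neq 0$ and $a_1(p)\equiv a_2(p)\equiv 0\pmod m$ (these lie in $\pi^*$ but not in $\pi$), while $E_2(x,m)$ counts the primes $p\le x$ with $(p,mN)=1$, $m\mid(a_1(p),a_2(p))$ and $a_1(p)a_2(p)=0$ (these lie in $\pi$ but not in $\pi^*$, since for such $p$ at least one coefficient vanishes and then $m$ automatically divides it). Both $E_1$ and $E_2$ are nonnegative.

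Next I would estimate the two discrepancy terms. Since the number of prime divisors of $mN$ is $\ll\log(mN)$, we have $0\le E_1(x,m)\ll\log(mN)$. For $E_2$, vanishing of the product forces $a_1(p)=0$ or $a_2(p)=0$, whence
\[
0\le E_2(x,m)\le|\{p\le x: a_1(p)=0\}|+|\{p\le x: a_2(p)=0\}|,
\]
and \eqref{zero_coefficient} bounds the right-hand side by $O\bigl(x/(\log x)^{3/2-\ve}\bigr)$ for every $\ve>0$ unconditionally, and by $O(x^{3/4})$ under GRH.

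Finally I would substitute the formula for $\pi_{f_1,f_2}(x,m)$ furnished by Proposition~\ref{asymptotic_pi}. In part (a), valid in the range $\log x\gg m^{21}(\log(mN))^2$, the main term $\delta(m){\rm Li}(x)$ and the secondary term $O(\delta(m){\rm Li}(x^\beta))$ are carried over unchanged, while the exponential error $O\bigl(m^5 x\exp(-c'\sqrt{\log x/m^7})\bigr)$ together with $E_1\ll\log(mN)$ are absorbed into $O\bigl(x/(\log x)^{3/2-\ve}\bigr)$ (after possibly shrinking $\ve$), using the lower bound on $\log x$ in terms of $m$ to kill the exponential term, and $E_2$ already has exactly that shape. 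In part (b), under GRH, one retains $\delta(m)\pi(x)+O\bigl(m^5x^{1/2}\log(mNx)\bigr)$, absorbs $E_1\ll\log(mN)$, and acquires the additional $O(x^{3/4})$ contributed by $E_2$. This yields precisely the two displayed formulas.

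There is no essential obstacle here: the argument is purely a bookkeeping reconciliation of the definitions of $\pi$ and $\pi^*$. The only points deserving any care are the elementary verifications that $\log(mN)$ and the exponential error in part (a) are dominated by the stated error terms in the indicated ranges of $x$, and the observation that the term $O(\delta(m){\rm Li}(x^\beta))$ in part (a) must be retained, since the possibly existing Landau--Siegel zero $\beta$ of $\zeta_{L_m}$ is not known to be bounded away from $1$.
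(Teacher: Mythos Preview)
Your proposal is correct and follows exactly the approach the paper takes: the paper simply observes that $\pi_{f_1,f_2}^*$ and $\pi_{f_1,f_2}$ differ only by primes with $a_i(p)=0$ (controlled by \eqref{zero_coefficient}) and by the finitely many primes dividing $mN$, and then invokes Proposition~\ref{asymptotic_pi}. Your write-up is just a more detailed version of this same bookkeeping, including the verification that the exponential error in part~(a) is absorbed into $O(x/(\log x)^{3/2-\ve})$ in the stated range of $x$.
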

	%\begin{rmk}\label{h_exponent}
	%Note that  \propref{asymptotic_pi} and \ref{asymptotic_pi*} are true for any positive integer $m$ if the exponents 21, 7 and 5 of $m$  are replaced by  24, 8, and 6, respectively and also the term $m^2$ is replaced by $2^{2t}m^2$ in both the propositions because in this case $n_{L_m} \le m^8$ and $|\mathscr C_m| \le m^6$.
	%\end{rmk}
	
	\begin{remark}\label{difficulty}
		Note that the error terms that appear in the conditional versions of the above propositions are quite large in terms of $m$. This makes \
		handling the sum of $\omega((a_1 (p), a_2 (p)))$ over primes $p$ difficult since the technique we use in proving Thm. \ref{omega} requires summing 
		$\pi_{f_1,f_2}^*(x,\ell)$ over primes $\ell$. This is the reason we are unable to obtain an asymptotic formula in Thm. \ref{omega}. Herein  lies also
		the difficulty in proving Conjecture \eqref{conj}. 
	\end{remark}

	\section{Preparation for the proof of \thmref{main}}\label{S_preparation}
	
	The bound in  \eqref{Hensel}, together with
	\eqref{beta_stark}, implies that the Landau-Siegel zero $\beta$, if it exists, satisfies
	\begin{equation*}
		\beta \le 1-\frac{c_0}{Nm^8},
	\end{equation*}
	where $c_0$ is as in  \eqref{beta_stark}. Therefore, in such a case we can choose a  constant $c>0$ such that
	\begin{equation}\label{beta}
		\beta \le 1-\frac{1}{m^c}
	\end{equation}
	uniformly over all $m\geq 2$ and we can  assume without loss of generality that $c\geq 21$. For example, we can take
	\[
	c=\max\left\{21, 9+ \frac{|\log (N/c_0)|}{\log 2}\right\}.
	\]	
	\noindent
	For this and the next section only, we set
	\begin{equation}\label{y}
		y =y(x)= L_2(x)^\eta, {\rm~~where~~} 0<\eta <\min\left\{\frac{1}{21},\frac{1}{c-2}\right\}.
	\end{equation}
	Here $c$ is as above and if  $\beta$ does not appear in Prop. \ref{asymptotic_pi}, then we set $c=21$.
	
	Below we will frequently use standard estimates such as
	\[
	\sum_{p\leq x} \frac 1  p \ll L_2(x), \textnormal{  or  } \sum_{p \leq x} \frac {\log p}{p} \ll \log x.
	\]
	\begin{lemma}\label{sum_1_over_p}
		With the assumptions and notation as above, we have
		\begin{enumerate}
			\item[(a)]
			for any $ \ell \le y$
			\begin{equation}\label{1bypno}
				\sum_{\substack{p\le x \\ \ell  |(a_1(p),a_2(p))}}\frac{1}{p}
				=\delta(\ell )L_2(x)+O\left(\frac{\ell^{c-2}}{L_2(x)} \right)%+O\left(\frac{\ell^{16}}{L_2(x)} \right)
				+O(L_3(x)).
			\end{equation}
			\item[(b)]
			Under GRH
			\begin{equation*}\label{1bypgrh}
				\sum_{\substack{p\le x \\ \ell |(a_1(p),a_2(p))}}\frac{1}{p}= \delta(\ell)L_2(x)
				%+ O\left(\frac{1}{\ell^2\log x}\right) + O\left({\ell^5}\frac{\log (\ell Nx)}{x^{1/2}}\right) 
				+O \left(\ell^5 \log \ell \right).
			\end{equation*}
		\end{enumerate}
	\end{lemma}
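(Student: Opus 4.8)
The plan is to deduce both parts from \propref{asymptotic_pi} by partial summation, with Mertens's theorem supplying the main term. Since the primes $p\mid \ell N$ contribute only $\sum_{p\mid \ell N}1/p=O(1)\ll L_3(x)$ to the sum in \eqref{1bypno}, it suffices to estimate $\Sigma:=\sum_{p\le x,\,(p,\ell N)=1,\,\ell\mid(a_1(p),a_2(p))}1/p$, and Abel summation gives $\Sigma=\pi_{f_1,f_2}(x,\ell)/x+\int_2^x \pi_{f_1,f_2}(t,\ell)\,t^{-2}\,dt$. For part (b) I would simply insert the GRH formula \eqref{evl}, which is valid for every $t\ge 2$: the main term then becomes $\delta(\ell)\bigl(\pi(x)/x+\int_2^x\pi(t)t^{-2}\,dt\bigr)=\delta(\ell)\sum_{p\le x}1/p=\delta(\ell)L_2(x)+O(1)$ by Mertens (using $\delta(\ell)\le 1$), while the error term of \eqref{evl} contributes $\ll \ell^5\bigl(\log(\ell Nx)\,x^{-1/2}+\int_2^\infty \log(\ell Nt)\,t^{-3/2}\,dt\bigr)\ll \ell^5\log\ell$, the last integral being convergent. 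This proves (b).

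For part (a) the subtlety is that the unconditional formula \eqref{piprop} is effective only once $\log t\gg \ell^{21}(\log(\ell N))^2$, so I would split the integral for $\Sigma$ at a cut-off $T=T(x)$ with $\log T=(L_2(x))^{c}$. Because $\ell\le y=(L_2(x))^\eta$ with $\eta<1/21$, one checks $\ell^{21}(\log\ell N)^2=o(\log T)$, so \eqref{piprop} holds throughout $[T,x]$, while $\log T=o(\log x)$, so $T<x$ for $x$ large. The short range $p\le T$ is bounded trivially by $\sum_{p\le T}1/p\ll L_2(T)= c\,L_3(x)\ll L_3(x)$. On $[T,x]$, Abel summation together with the elementary identity $\int_T^x \mathrm{Li}(t)\,t^{-2}\,dt=L_2(x)-L_2(T)+\mathrm{Li}(T)/T-\mathrm{Li}(x)/x$ gives, after the $\mathrm{Li}$-boundary terms cancel,
\[
\delta(\ell)\bigl(L_2(x)-L_2(T)\bigr)+O\!\left(\delta(\ell)\int_T^x\frac{\mathrm{Li}(t^\beta)}{t^2}\,dt\right)+O\!\left(\ell^5\int_T^x \frac{\exp\!\bigl(-c'\sqrt{\log t/\ell^7}\bigr)}{t}\,dt\right),
\]
together with two boundary terms of the shape $\bigl(\pi_{f_1,f_2}(t,\ell)-\delta(\ell)\mathrm{Li}(t)\bigr)/t$ at $t=x$ and $t=T$.

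The main term is $\delta(\ell)L_2(x)+O(L_3(x))$ since $\delta(\ell)L_2(T)\le L_2(T)\ll L_3(x)$. For the Landau--Siegel integral I would use the explicit bound $\beta\le 1-\ell^{-c}$ from \eqref{beta}, which yields $\int_T^x \mathrm{Li}(t^\beta)t^{-2}\,dt\ll \ell^{c}(\log T)^{-1}\exp(-\log T/\ell^{c})$; since $\log T/\ell^{c}\ge (L_2(x))^{c(1-\eta)}\to\infty$ this is negligible, and in particular it is $\ll \ell^{c-2}/L_2(x)$. The exponential integral I would handle by the change of variable $w=\sqrt{\log t/\ell^7}$, turning it into $2\ell^{12}\int_{w_0}^\infty w e^{-c'w}\,dw\ll \ell^{12}(1+w_0)e^{-c'w_0}$ with $w_0=\sqrt{\log T/\ell^7}\to\infty$, which is again negligible; the two boundary terms are $o(1)$ by the same estimates. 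Collecting everything yields \eqref{1bypno}.

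I expect the only real obstacle to be the uniformity in $\ell$ in part (a): the unconditional Chebotarev bound \eqref{piprop} becomes effective only past a barrier growing polynomially in $\ell$, and it carries a Landau--Siegel term whose size must be controlled via the lower bound \eqref{beta} for the zero-free region. Taking the cut-off $T$ to be the exponential of a fixed power of $L_2(x)$ is precisely what simultaneously keeps the short-range sum at size $O(L_3(x))$ and makes both the Landau--Siegel tail and the Chebotarev error tail negligible; part (b) needs none of this, because the conditional formula \eqref{evl} is already valid for all $t\ge 2$.
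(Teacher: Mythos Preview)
Your proposal is correct and follows essentially the same route as the paper: partial summation to reduce to an integral of $\pi_{f_1,f_2}(t,\ell)$, then in part (a) splitting the range at a cut-off $T$, bounding the short range trivially by $L_2(T)$, and applying \propref{asymptotic_pi}(a) on $[T,x]$ with the Siegel-zero bound \eqref{beta}; part (b) is handled identically by inserting \eqref{evl} on the whole range. The only cosmetic difference is the choice of cut-off: the paper takes $T=\log x$ (so $L_2(T)=L_3(x)$ on the nose), whereas you take $\log T=(L_2(x))^c$, which is larger than necessary but still satisfies both $\ell^{21}(\log(\ell N))^2=o(\log T)$ and $L_2(T)\ll L_3(x)$; the resulting bounds on the Siegel and exponential tails are then even smaller than the paper's, so everything still fits into the stated error terms.
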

	\begin{proof}
		By partial summation, we write
		\begin{align*}
			\sum_{\substack{p\le x \\ \ell |(a_1(p),a_2(p))}}\frac{1}{p}&= \frac{1}{x}{\pi_{f_1,f_2}(x,\ell )}
			+
			\int_2^x \frac{1}{t^2}  \pi_{f_1,f_2}(t,\ell ) dt +O(1),
		\end{align*}
		where the error term is present because of the primes dividing $N\ell$. Since
		$$
		\frac{1}{x}{\pi_{f_1,f_2}(x,\ell )}\le  \frac{1}{x}\pi(x)=O(1),
		$$ 
		we have,
		\begin{align}\label{sum1}
			\sum_{\substack{p\le x \\ \ell |(a_1(p),a_2(p))}}\frac{1}{p}&= \int_2^x \frac{1}{t^2}  \pi_{f_1,f_2}(t,\ell ) dt +O(1).
		\end{align}
		%To prove part (a), we follow the argument of Murty \cite[Lemma 4.1]{km}. First we write
		Now  subdividing the interval $[2, x]$, we write
		\begin{equation*}
			\sum_{\substack{p\le x \\ \ell |(a_1(p),a_2(p))}}\frac{1}{p} =    \int_2^T \frac{1}{t^2} \pi_{f_1,f_2}(t,\ell )dt+
			\int_{T}^x \frac{1}{t^2} \pi_{f_1,f_2}(t,\ell )dt+ O(1),
		\end{equation*}
		where $T$ is to be chosen later. The first integral on the right hand side is
		$$
		\ll   \int_2^T \frac{1}{t^2} \pi(t)dt \ll L_2(T).
		$$
		To estimate the second integral, we assume that $T$ is  large enough so that  \eqref{piprop} can be applied for  $h=\ell$; i.e., we need
		\[
		{\ell}^{21}(\log (\ell N))^2 \ll \log T.
		\]
		Since  $y$ satisfies \eqref{y}, we have,
		$${\ell}^{21}(\log (\ell N))^2 \leq y^{21}(\log (yN))^2 \ll L_2(x)^{21 \eta }L_3(x)^2\ll L_2(x)^{1-\ve}$$ for some suitable $\ve>0$; and hence,
		we can take $T= \log x$. 
		Therefore, the second integral is
		$$\int_{\log x}^x  \frac{1}{t^2} \left(\delta(\ell)  {\rm Li}(t) +O\left( \delta(\ell) {\rm Li}(t^\beta)\right)+O\left({\ell}^5 t~{\rm exp}\left(-c'\sqrt{\frac{\log t}{{\ell}^7}}\right) \right)\right)dt.
		$$
		Now we see that
		\begin{align*}
			\int_{\log x}^x   \delta(\ell )  \frac{{\rm Li}(t)}{t^2} dt
			&= \int_{\log x}^x  \delta(\ell)\left(\frac{1}{t\log t}+ O\left( \frac{1}{t(\log t)^2}\right)\right) dt\\
			&= \delta(\ell)L_2(x) +O(L_3(x) ).
		\end{align*}
		Next we consider the integral involving $\beta$. From  \eqref{beta} we know that $\beta \le 1-\frac{1}{{\ell}^c}$. Therefore,
	\begin{align*}
		\int_{\log x}^x \frac{{\rm Li}(t^\beta)}{t^2}dt &\ll \int_{\log x}^x \frac{1}{t^{2-\beta}\log t }dt
		\ll \int_{\log x}^x \frac{1}{t\log t }\exp \left(-\frac{\log t}{{\ell}^c}\right)dt\\
		& \ll  {\ell}^c (L_2(x))^{-1}\exp \left(-\frac{L_2(x)}{{\ell}^c}\right)\ll {\ell}^c (L_2(x))^{-1}.
	\end{align*}
	Finally,
	\begin{align*}
		\int_{\log x}^x \ell^5 \frac{1}{t} ~{\rm exp}\left(-c'\sqrt{\frac{\log t}{\ell^7}}\right)dt
		&= \ell^5 \int_{L_2(x)}^{\log x} {\rm exp}\left(-c'\sqrt{\frac{u}{\ell^7}} \right)du \\
		& \ll  \ell^{5+7/2}\sqrt{ L_2(x)}{\rm exp}\left(-c'\sqrt{\frac{L_2(x)}{\ell^7}}\right)\\
		&\ll \ell^{19} (L_2(x))^{-1}
	\end{align*}
		and this completes the proof of part (a).\\
		
		The proof of part (b) is very similar after applying  \eqref{evl} %of  \propref{asymptotic_pi} 
		in  \eqref{sum1}. 
	\end{proof}

	Given a positive integer $n$ and a prime $\ell$, we define
	\begin{equation}\label{vd}
	%	v(\ell,n)=|\{p^{\alpha}:p^{\alpha}\Vert n ~{\rm and}~a_1 (p^{\alpha})\equiv 0\equiv 		a_2(p^{\alpha}) \pmod{\ell}\}|,
		v(\ell,n)=|\{p^{\alpha}:p^{\alpha}\Vert n ~{\rm and}~\ell|\left(a_1 (p^{\alpha}),a_2(p^{\alpha})\right)\}|.
	\end{equation}
	In the next two lemmas, we obtain asymptotic formulae for the partial sums of $v(\ell,n)$ and $v^2(\ell,n)$ which will play an important role in proving  Thm. \ref{main}.
	\begin{lemma}\label{sum_v} 
		With the same assumption and notation as above, we have,
		\begin{enumerate}
			\item[(a)]\label{a}
			for any $ \ell \le y$, 
			\begin{equation*}\label{wos}
				\sum_{n\le x}v(\ell,n)=\delta( \ell )xL_2(x)
				+O(xL_3(x));
			\end{equation*}
			\item[(b)]
			and under GRH, we have, for any prime $\ell$,
			\begin{equation*}\label{wosgrh}
				\sum_{n\leq x}v(\ell ,n)=\delta(\ell){x}L_2(x)+O\left(\ell^5x\log \ell \right).
			\end{equation*}	
		\end{enumerate}
	\end{lemma}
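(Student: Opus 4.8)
The strategy is to sort the count according to the prime power that actually appears. Interchanging the order of summation in the definition \eqref{vd} of $v(\ell,n)$ gives
\[
\sum_{n\le x}v(\ell,n)=\sum_{\alpha\ge1}\ \sum_{\substack{p^{\alpha}\le x\\ \ell\mid(a_1(p^{\alpha}),a_2(p^{\alpha}))}}\#\{n\le x:\ p^{\alpha}\Vert n\},
\]
and since $\#\{n\le x:\ p^{\alpha}\Vert n\}=\lfloor x/p^{\alpha}\rfloor-\lfloor x/p^{\alpha+1}\rfloor$, the term with $\alpha=1$, after replacing $\lfloor x/p\rfloor$ by $x/p$, produces the expected main term $x\sum_{p\le x,\ \ell\mid(a_1(p),a_2(p))}1/p$, to which \lemref{sum_1_over_p} can be applied directly. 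The plan is thus to peel off this main term and bound everything else by $O(x)$ with an absolute implied constant.

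Concretely, I would argue that each remaining contribution is $O(x)$, hence negligible against both $xL_3(x)$ and $\ell^{5}x\log\ell$: the replacement $\lfloor x/p\rfloor=x/p+O(1)$ costs at most $O(\#\{p\le x\})=O(\pi(x))=O(x)$; the subtracted terms $\lfloor x/p^{2}\rfloor$ coming from $\alpha=1$ contribute at most $x\sum_{p}p^{-2}=O(x)$; and the whole tail $\alpha\ge2$ is bounded, crudely, by $x\sum_{\alpha\ge2}\sum_{p}p^{-\alpha}=x\sum_{p}\frac{1}{p(p-1)}=O(x)$. Note that one does not need to treat the primes $p\mid N\ell$ separately here, since the sum $\sum_{p\le x,\ \ell\mid(a_1(p),a_2(p))}1/p$ appearing in \lemref{sum_1_over_p} already ranges over all such primes and the discrepancy with the coprimality‑restricted count underlying \eqref{pi_definition} was absorbed in the $O(1)$ error there. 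This leaves
\[
\sum_{n\le x}v(\ell,n)=x\sum_{\substack{p\le x\\ \ell\mid(a_1(p),a_2(p))}}\frac1p+O(x).
\]

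Finally I would insert \lemref{sum_1_over_p}. For part (a) it gives $x\sum_{p\le x,\ \ell\mid(a_1(p),a_2(p))}1/p=\delta(\ell)xL_2(x)+O\big(\ell^{c-2}x/L_2(x)\big)+O(xL_3(x))$, and the hypothesis $\ell\le y=L_2(x)^{\eta}$ with $\eta<1/(c-2)$ (see \eqref{y}) forces $\ell^{c-2}\le L_2(x)^{(c-2)\eta}\le L_2(x)$, so the middle error is $O(x)$ and everything collapses to $\delta(\ell)xL_2(x)+O(xL_3(x))$, proving (a). For part (b) one proceeds identically with \lemref{sum_1_over_p}(b): multiplying its error $O(\ell^{5}\log\ell)$ by $x$ yields $O(\ell^{5}x\log\ell)$, which dominates all the $O(x)$ terms above, giving $\sum_{n\le x}v(\ell,n)=\delta(\ell)xL_2(x)+O(\ell^{5}x\log\ell)$. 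The only slightly delicate point — and the main thing to get right — is the uniform verification that the prime‑power tails are genuinely $O(x)$; everything else is a direct consequence of \lemref{sum_1_over_p} and the convergence of $\sum_p p^{-2}$.
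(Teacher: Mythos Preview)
Your argument is correct and follows essentially the same route as the paper: interchange the order of summation, isolate the $\alpha=1$ contribution, bound the $\alpha\ge 2$ tail and the rounding errors by $O(x)$ using the convergence of $\sum_p p^{-2}$, and then invoke \lemref{sum_1_over_p} together with the constraint $\eta<1/(c-2)$ from \eqref{y} to absorb the $\ell^{c-2}$ term. Your treatment of the error terms is in fact slightly more explicit than the paper's; in particular your observation that the primes $p\mid N\ell$ need no separate handling, being already absorbed in the $O(1)$ of \lemref{sum_1_over_p}, is correct and worth keeping.
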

	\begin{proof}
		We write 
		\begin{align*}\label{v1}
			\sum_{n\le x}v(\ell ,n)&=\sum_{n\le x}\sum_{\substack{p^{\alpha}\Vert n\\ \ell |(a_1  (p^{\alpha}), a_2 (p^{\alpha}))}} 1
			=\sum_{\substack{p^{\alpha}\le x\\ \ell |(a_1  (p^{\alpha}), a_2 (p^{\alpha}))}}\sum_{\substack{n\le x\\p^{\alpha}\Vert n }}1.
		\end{align*}
		We split the  sum into two parts, the one with $\alpha=1$ and the other with $\alpha 
		\geq 2$.
		Since the contribution from all the terms with $\alpha\ge 2$ is $O(x)$, so we can write
		\begin{equation}\label{v1}
			\sum_{n\le x}v(\ell,n) =\sum_{\substack{p\le x\\ \ell |(a_1  (p), a_2 (p))}}\sum_{\substack{n\le x\\p\Vert n }}1 +O(x).
		\end{equation}
		Simplifying further, we use 
		the easily proved asymptotic formula
		$$\sum_{\substack{n\le x\\p\Vert n }}1=\frac{x}{p}+O\left(\frac{x}{p^2}\right) +O(1)$$  
		to obtain
		\begin{align*}
			\sum_{\substack{p\le x\\ \ell  |(a_1  (p), a_2 (p))}}\sum_{\substack{n\le x\\p\Vert n }}1
			=\sum_{\substack{p\le x\\ \ell |(a_1  (p), a_2 (p))}}\left\{\frac{x}{p}+O\bigg(\frac{x}{p^2}\bigg)+O(1)\right\}
			=x \Big( \sum_{\substack{p\le x\\ \ell |(a_1  (p), a_2 (p))}}\frac{1}{p}\Big)+O(x).
		\end{align*}
		Now use \lemref{sum_1_over_p} for the sum appeared in the right hand side of this expression
		to obtain
		\begin{equation}\label{v4}
			{\displaystyle \sum_{\substack{p\le x\\ \ell |(a_1  (p), a_2 (p))}}\sum_{\substack{n\le x\\p\Vert n }}1=
				\begin{cases}
					{\displaystyle	\delta(\ell )xL_2(x)+O\left(\frac{\ell^{2c-2}x}{L_2(x)} \right) %+ O\left(\frac{\ell^{16}x}{L_2(x)} \right)
						+O(xL_3(x))}, &\ell \le y,\\
					{\displaystyle 	\delta(\ell){x} L_2(x)+O\left(\ell^5 x\log \ell \right)},& {\rm under ~GRH}.
			\end{cases}}
		\end{equation}
		Finally, substituting  \eqref{v4} in  \eqref{v1}  and recalling the choice of $y$ in \eqref{y}, we finish the proof.
		%Therefore from \ref{v1} and \ref{v4} we complete the proof.
	\end{proof}

	\begin{lemma}\label{sum_v_2}
		We have,
		\begin{enumerate}
			\item[(a)]
			for any $ \ell \le y$,
			\begin{equation*}\label{ws}
				\sum_{n\le x}v^2( \ell ,n)= \delta( \ell)^2 x(L_2(x))^2 + O\left(\delta(\ell) xL_2(x) L_3(x) \right);
			\end{equation*}
			\item[(b)]
			and under GRH, we have, for any prime $\ell$, 
			\begin{equation*}
				\sum_{n\leq x}v^2(\ell,n)=\delta(\ell)^2{x}(L_2(x))^2 
				+O\left(\ell^{10}x (\log \ell )^2 \right) 
				+O\left(\ell^3 x L_2(x)  \log \ell \right).
			\end{equation*}
		\end{enumerate}
	\end{lemma}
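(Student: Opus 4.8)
The plan is to treat $\sum_{n\le x}v^{2}(\ell,n)$ as a Tur\'an-type second moment and reduce it, via \lemref{sum_1_over_p} and \lemref{sum_v}, to an evaluation of $\bigl(\sum_{p\le x,\ \ell\mid(a_1(p),a_2(p))}1/p\bigr)^{2}$. By \eqref{vd}, $v(\ell,n)$ is a sum of $0$'s and $1$'s over the primes $p\mid n$ (the summand at $p$ being $1$ exactly when $\ell\mid(a_1(p^{\alpha}),a_2(p^{\alpha}))$ for the $\alpha$ with $p^{\alpha}\Vert n$); squaring and separating the diagonal $p=q$, which reproduces $v(\ell,n)$ since the summand is $0$-$1$ valued, gives $v^{2}(\ell,n)=v(\ell,n)+W(\ell,n)$, where $W(\ell,n)$ counts ordered pairs of distinct prime powers $p^{\alpha}\Vert n$, $q^{\beta}\Vert n$ with $\ell\mid(a_1(p^{\alpha}),a_2(p^{\alpha}))$ and $\ell\mid(a_1(q^{\beta}),a_2(q^{\beta}))$. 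Summing over $n\le x$, the diagonal contribution $\sum_{n\le x}v(\ell,n)$ is supplied by \lemref{sum_v}: in case (a) it equals $\delta(\ell)xL_2(x)+O(xL_3(x))$, which is absorbed in the target error $O(\delta(\ell)xL_2(x)L_3(x))$ because $\delta(\ell)L_2(x)\to\infty$ for $\ell\le y$ (indeed $\delta(\ell)\gg\ell^{-2}\gg y^{-2}$ for $\ell>M$ by \propref{asymptotic_delta}, and $\delta(\ell)\gg1$ for the finitely many $\ell\le M$); in case (b) it equals $\delta(\ell)xL_2(x)+O(\ell^{5}x\log\ell)$, which lies inside the target error since $\ell^{5}x\log\ell\ll\ell^{10}x(\log\ell)^{2}$ and $\delta(\ell)xL_2(x)\ll\ell^{3}xL_2(x)\log\ell$. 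So everything reduces to estimating $\sum_{n\le x}W(\ell,n)$.

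For the off-diagonal sum I would interchange summations and group each $n$ by the exact powers of $p$ and $q$ dividing it, obtaining
\[
\sum_{n\le x}W(\ell,n)=\sum_{\substack{p\ne q,\ \alpha,\beta\ge1,\ p^{\alpha}q^{\beta}\le x\\ \ell\mid(a_1(p^{\alpha}),a_2(p^{\alpha})),\ \ell\mid(a_1(q^{\beta}),a_2(q^{\beta}))}}\bigl|\{n\le x:\ p^{\alpha}\Vert n,\ q^{\beta}\Vert n\}\bigr|.
\]
The terms with $\alpha\ge2$ (and, symmetrically, $\beta\ge2$) are bounded, after dropping the divisibility condition at $p^{\alpha}$ and using $|\{n\le x:p^{\alpha}\Vert n,q^{\beta}\Vert n\}|\le x/(p^{\alpha}q^{\beta})$ together with $\sum_{p}\sum_{\alpha\ge2}p^{-\alpha}=\sum_{p}\tfrac1{p(p-1)}=O(1)$ and $\sum_{q^{\beta}\le x,\ \ell\mid(a_1(q^{\beta}),a_2(q^{\beta}))}q^{-\beta}=\sum_{q\le x,\ \ell\mid(a_1(q),a_2(q))}q^{-1}+O(1)$, by $O\bigl(x\sum_{q\le x,\ \ell\mid(a_1(q),a_2(q))}q^{-1}\bigr)+O(x)$; by \lemref{sum_1_over_p} this is $O(\delta(\ell)xL_2(x))+O(x\ell^{c-2}/L_2(x))+O(xL_3(x))$ in case (a) and $O(\delta(\ell)xL_2(x))+O(x\ell^{5}\log\ell)$ in case (b), and in both cases it stays within the claimed error — in case (a) via $\ell\le y$ and the choice of $\eta$ in \eqref{y}, which give $\ell^{c-2}\le L_2(x)^{1-\varepsilon}$, and in case (b) via $\ell^{5}\log\ell\ll\ell^{10}(\log\ell)^{2}$ and $\delta(\ell)\ll1$.

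In the remaining range $\alpha=\beta=1$ I would substitute, for $pq\le x$,
\[
\bigl|\{n\le x:p\Vert n,q\Vert n\}\bigr|=\frac{x}{pq}\Bigl(1-\frac1p\Bigr)\Bigl(1-\frac1q\Bigr)+O(1)=\frac{x}{pq}+O\Bigl(\frac{x}{p^{2}q}+\frac{x}{pq^{2}}\Bigr)+O(1).
\]
The $O(x/p^{2}q+x/pq^{2})$ terms sum to $\ll x\bigl(\sum_{p}p^{-2}\bigr)\bigl(\sum_{q\le x,\ \ell\mid(a_1(q),a_2(q))}q^{-1}\bigr)$, again controlled by \lemref{sum_1_over_p}; the $O(1)$ terms contribute $O\bigl(\#\{(p,q):p\ne q,\ pq\le x,\ \ell\mid(a_1(p),a_2(p)),\ \ell\mid(a_1(q),a_2(q))\}\bigr)$, which — after noting that $pq\le x$ forces $p\le\sqrt{x}$ or $q\le\sqrt{x}$, bounding the inner count by $\pi(\sqrt{x})\,$-type estimates via $\pi_{f_1,f_2}(x/p,\ell)\le\pi(x/p)$ (resp.\ by \propref{asymptotic_pi}(b) under GRH) and then applying \lemref{sum_1_over_p} to $\sum_{p\le\sqrt{x},\ \ell\mid(a_1(p),a_2(p))}p^{-1}$ — is again within the claimed error in both cases.

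This leaves the main term $x\sum_{p\ne q,\ pq\le x,\ \ell\mid(a_1(p),a_2(p)),\ \ell\mid(a_1(q),a_2(q))}(pq)^{-1}$, which I would evaluate by a squeeze: it lies between $x\sum_{p\ne q,\ p,q\le\sqrt{x}}(pq)^{-1}$ and $x\sum_{p\ne q,\ p,q\le x}(pq)^{-1}$ (the divisibility conditions on $p,q$ kept throughout), and each of these equals $x\bigl(\sum_{p\le z,\ \ell\mid(a_1(p),a_2(p))}p^{-1}\bigr)^{2}-x\sum_{p\le z,\ \ell\mid(a_1(p),a_2(p))}p^{-2}$ with $z=\sqrt{x}$, resp.\ $z=x$. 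Inserting \lemref{sum_1_over_p}(a) — also legitimate with $z=\sqrt{x}$, the hypothesis $\ell\le y$ and \eqref{y} being more than enough to run its proof with $\sqrt{x}$ in place of $x$, and using $L_2(\sqrt{x})=L_2(x)+O(1)$ and $x\sum_{p}p^{-2}=O(x)$ — gives $\delta(\ell)^{2}xL_2(x)^{2}+O(\delta(\ell)xL_2(x)L_3(x))$ for both ends of the squeeze; here the constraint on $\eta$ in \eqref{y} is exactly what makes $x\cdot\delta(\ell)L_2(x)\cdot O(\ell^{c-2}/L_2(x))$ and $x\cdot O((\ell^{c-2}/L_2(x))^{2})$ negligible against $\delta(\ell)xL_2(x)L_3(x)$, and the other stray terms $xL_3(x)^{2}$, $x\delta(\ell)L_2(x)$ etc.\ are dominated using $\delta(\ell)L_2(x)\to\infty$. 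This proves (a). Part (b) is identical, now using \lemref{sum_1_over_p}(b) and the bound $\delta(\ell)\ell^{5}\ll\ell^{3}$ — valid for every prime $\ell$ (from $\delta(\ell)\le3\ell^{-2}$ for $\ell>M$ by \propref{asymptotic_delta}, and $\delta(\ell)\le1$ for the finitely many $\ell\le M$) — so that the square of the error $\ell^{5}\log\ell$ and the cross term $\delta(\ell)L_2(x)\cdot\ell^{5}\log\ell$ produce, after multiplying by $x$, exactly the two error terms $\ell^{10}x(\log\ell)^{2}$ and $\ell^{3}xL_2(x)\log\ell$. The only genuinely delicate part is this error bookkeeping: since $\delta(\ell)$ may be as small as $\asymp L_2(x)^{-2\eta}$ when $\ell$ is near $y$, one must invoke $\delta(\ell)L_2(x)\to\infty$ (and, under GRH, $\delta(\ell)\ell^{5}\ll\ell^{3}$) repeatedly to make sure every stray term is dominated by $\delta(\ell)xL_2(x)L_3(x)$, respectively by $\ell^{10}x(\log\ell)^{2}+\ell^{3}xL_2(x)\log\ell$.
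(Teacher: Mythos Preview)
Your proof is correct and follows essentially the same route as the paper: expand $v^{2}(\ell,n)$ as a double sum over prime powers exactly dividing $n$, separate the diagonal (handled by \lemref{sum_v}), dispose of the higher prime powers, and reduce the remaining $\alpha=\beta=1$ contribution to evaluating $x\sum_{p\ne q,\ pq\le x}(pq)^{-1}$ via \lemref{sum_1_over_p}. The only technical difference is that you estimate this double sum by a squeeze between $\sum_{p,q\le\sqrt{x}}$ and $\sum_{p,q\le x}$, whereas the paper uses the Dirichlet hyperbola identity $\sum_{pq\le x}=2\sum_{p\le\sqrt{x}}\sum_{q\le x/p}-(\sum_{p\le\sqrt{x}})^{2}$; both give the same main term and error.
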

	
	\begin{proof}
		We have,
		\begin{align*}
			\sum_{n\leq x}v^2( \ell ,n)
			=	\sum_{n\le x}\sum_{\substack{p^{\alpha}\Vert n,  q^{\beta}\Vert n\\  \ell |(a_1 (p^{\alpha}), a_2(p^{\alpha}))
					\\  \ell |(a_1 (q^{\beta}), a_2(q^{\beta}))}} 1.
		\end{align*}
		%As in the proof of \lemref{sum_v}, 
		We split the above sum into three parts: the first one with $\alpha=\beta=1$, the second one with exactly one of $\alpha$ and $\beta$ $=1$; and the third one with $\min\{\alpha, \beta\}>1$. In view of \lemref{sum_1_over_p}, the second sum will contribute $O(xL_2(x))$ whereas  the contribution from the last sum  is $O(x)$. Therefore, introducing the notation
		\[
		D(\ell):=\{(p,q): \ell |(a_1 (p), a_2(p)),  \ell |(a_1 (q), a_2(q)\},
		\]
		we may write
		\begin{align}\label{vs11}
			\sum_{n\leq x}v^2( \ell ,n)& =\sum_{n\le x}\sum_{\substack{(p, q)\in D(\ell) \\p\Vert n, q\Vert n}} 1+ O(xL_2(x)) \notag \\
			&= \sum_{n\le x}\sum_{\substack{p\le x, p\Vert n\\  \ell |(a_1 (p), a_2(p))}} 1 +
			\sum_{n\le x}\sum_{\substack{ (p, q)\in D(\ell)\\p\neq q, p\Vert n, q\Vert n}} 1+O(xL_2(x)).
		\end{align}
		The first sum on the right of the above equation is already examined  in  \eqref{v4}. So
		we now simplify the latter sum. 
		\begin{align}\label{p_not_q_1}
			\sum_{n\le x}\sum_{\substack{ (p, q)\in D(\ell)\\p\neq q, p\Vert n, q\Vert n}} 1=&
			\sum_{\substack{(p, q)\in D(\ell)\\pq \leq x}} 
			\sum_{\substack{n\le x\\p\neq q, p\Vert n, q\Vert n}}1 \notag\\
			=&
			\sum_{\substack{(p, q)\in D(\ell)\\pq \leq x, p\neq q}} 
			\left\{\frac{x}{pq}+O\bigg(x\bigg(\frac{1}{p^2q}+\frac{1}{pq^2}\bigg)\bigg)+O(1)\right\}\notag\\
			=&
			%\sum_{\substack{(p, q)\in D(\ell)\\pq \leq x, p\neq q}} 	\frac{x}{pq}+O\bigg(x \sum_{p,q \le x}	\left(\frac{1}{p^2q}+\frac{1}{pq^2}\right)\bigg) +O(xL_2(x))\notag\\
			%	=&
			\sum_{\substack{(p, q)\in D(\ell)\\pq \leq x, p\neq q}} 
			\frac{x}{pq}+O\bigg(x\sum_{p\leq x}
			\frac{1}{p}\bigg) +O\left(xL_2(x)\right).
		\end{align}
		%	\substack{p\leq x\\  \ell |(a_1 (p), a_2(p))}
		%Futher to simply the sum $\sum_{\substack{(p, q)\in D(\ell)\\pq \leq x, p\neq q}} 
		%\frac{x}{pq}$ 
		We can express the first sum as (this trick is called the Dirichlet hyperbola method)
		\begin{align}\label{Dirichlet}
			\sum_{\substack{(p, q)\in D(\ell)\\pq \leq x, p\neq q}} 
			\frac{1}{pq}=&2\sum_{\substack{p\leq \sqrt{x}\\  \ell |(a_1 (p), a_2(p))}}\frac{1}{p}\sum_{\substack{q\le \frac{x}{p}, p\neq q\\  \ell |(a_1 (q), a_2(q))}}\frac{1}{q}-\bigg(\sum_{\substack{p\leq \sqrt{x}\\  \ell |(a_1 (p), a_2(p))}}\frac{1}{p}\bigg)^2
		\end{align}
		and using part (a) of \lemref{sum_1_over_p} for each individual sum, we obtain
		\begin{align}\label{p_not_q_2}
			\sum_{\substack{(p, q)\in D(\ell)\\pq \leq x, p\neq q}} 
			\frac{1}{pq}=\delta(\ell)^2L_2(x)^2+O(\delta(\ell)L_2(x)L_3(x))%+O\Big(\frac{{\ell}^{4c-4}}{L_2(x)}\Big)
			+O\left(\ell^{2c-2}\right).
		\end{align}
		Noting that $\ell \leq y$ and combining \eqref{y},  \eqref{vs11}, \eqref{p_not_q_1} and \eqref{p_not_q_2}  completes the proof of part (a).\\
		
		To prove part (b),  we  use \lemref{sum_1_over_p} (b)  in \eqref{Dirichlet} and proceed as before.
		%Now we apply \lemref{sum_1_over_p} %for the estimate $\displaystyle{\sum_{\substack{p\leq x\\ h |(a_1 (p), a_2(p))}}\frac{1}{p}}$
		%	and then substitute that in  \eqref{vs11} to complete the proof.
	\end{proof}

	\section{Proof of \thmref{main}}\label{S_main}
	Recall that the parameter $y$ is defined by  \eqref{y}.\\
	To prove part (a), we first write
	\begin{equation}\label{eq110}
		\sum_{\substack{n\leq x\\ (n, (a_1(n), a_2(n)))=1}}1=\sum_{\substack{n\leq x\\ (n, (a_1(n), a_2(n)))=1\\ p|n \implies p>y}}1+\sum_{\substack{n\leq x\\ (n, (a_1(n), a_2(n)))=1\\  \ell |n {\rm ~for~some} ~\ell \leq y }}1.
	\end{equation}
	We denote the first and the second sum on the right hand side of \eqref{eq110} by $S_1$ and $S_{2}$, respectively.	Now to estimate $S_1$, we need the following standard and  easily proved lemma (the sieve of Eratosthenes).
	\begin{lemma}
		For any real numbers $x\geq 3$ and $y\geq 2$, we have
		\be\label{erato}
		\sum_{\substack{1\leq n \leq x\\ p|n \implies p>y}} 1 =x\prod_{p\leq y} \Big(1-\frac{1}{p}\Big)  +O(2^y).
		\ee
	\end{lemma}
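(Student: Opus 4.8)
The plan is to prove this by inclusion--exclusion (Legendre's sieve). Set $P=P(y):=\prod_{p\le y}p$, the product of all primes up to $y$; this is a squarefree integer with exactly $\pi(y)$ prime factors. The key observation is that, for $n\ge 1$, the condition ``$p\mid n\implies p>y$'' is exactly the condition $(n,P)=1$.

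First I would use the basic identity $\sum_{d\mid (n,P)}\mu(d)=1$ if $(n,P)=1$ and $0$ otherwise, to rewrite the left-hand side as
\[
\sum_{\substack{1\le n\le x\\ p\mid n\implies p>y}}1=\sum_{n\le x}\ \sum_{d\mid (n,P)}\mu(d)=\sum_{d\mid P}\mu(d)\left\lfloor\frac{x}{d}\right\rfloor,
\]
where in the last step I interchange the order of summation (legitimate since all the sums are finite) and use that, for a fixed $d\mid P$, the number of $n\le x$ with $d\mid n$ is $\lfloor x/d\rfloor$.

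Next I would estimate $\lfloor x/d\rfloor = x/d+O(1)$ for each $d\mid P$. Since $P$ has exactly $2^{\pi(y)}$ divisors, summing these $O(1)$ errors over $d\mid P$ contributes a total error of $O(2^{\pi(y)})=O(2^y)$, using the crude bound $\pi(y)\le y$. The main term is
\[
x\sum_{d\mid P}\frac{\mu(d)}{d}=x\prod_{p\le y}\left(1-\frac1p\right),
\]
the equality being the finite Euler-product expansion of the sum over the (squarefree) divisors of $P$. Adding the two pieces yields the asserted formula.

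There is no genuine obstacle here; the only points deserving a word of care are the interchange of summation (immediate from finiteness) and the passage from $2^{\pi(y)}$ to $2^y$, which is the wasteful but sufficient estimate $\pi(y)\le y$. One could of course sharpen the error term to $O(2^{\pi(y)})$, or, invoking Mertens' theorem, record that the main term is $\asymp x/\log y$, but the stated form is exactly what is needed in the estimation of $S_1$ in \eqref{eq110}.
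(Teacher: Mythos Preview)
Your proof is correct and is exactly the standard Legendre sieve argument that the paper alludes to when it calls the lemma ``standard and easily proved (the sieve of Eratosthenes)''; the paper does not spell out a proof, but your inclusion--exclusion computation is precisely what is intended.
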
 
	Using the above lemma, we can now write
	\begin{equation}\label{eq112}
		S_1
		\ll \sum_{\substack{n\leq x\\ p|n \implies p>y}}1
		\ll x \prod_{p\leq y}\Big(1-\frac{1}{p}\Big) \ll \frac{x}{L_3(x)}.
	\end{equation}
	For estimating $S_{2}$, we first note that the Fourier coefficients are multiplicative since $f_1$ and $f_2$  are both eigenforms. It follows that if $(n, (a_1(n),a_2(n)))=1$, then  $v(\ell,n)=0$ for all primes $\ell|n$, where 
	$v(\ell,n)$ is defined by  \eqref{vd}. This assertion will be false if we do not have multiplicativity over the entire set of positive integers and this is why we
	need to restrict to forms that are eigenfunctions of all the Hecke operators. 
	Thus
	\begin{equation}\label{secondsum}
		S_{2}=\sum_{\substack{n\leq x\\ (n, (a_1(n), a_2(n)))=1\\  \ell |n {\rm ~for~some} ~\ell \leq y }}1\le \sum_{\ell \le y}\sum_{\substack{n\le x, \ell |n\\ v(\ell ,n)=0}}1 
		\le \sum_{\ell \le y}\sum_{\substack{n\le x \\  v(\ell ,n)=0}}1.
	\end{equation}
	The idea for estimating the inner sum $\displaystyle{\sum_{\substack{n\le x \\  v(\ell ,n)=0}}1}$ for a given prime $\ell\le y$ is encapsulated in the following 
	simple yet crucial lemma. The idea of this lemma is not new. See, e.g.,  \cite{km}. 
	\begin{lemma}\label{erdos}
		Suppose $(a_n)$ is a sequence of real numbers such that 
		\[
		\sum_{n \leq x}a_n =c(x) x +E_1(x) {~ ~ and ~ ~}
		\sum_{n \leq x}a_n^2 =c(x)^2 x+E_2(x),
		\]
		for large enough $x$,
		where we assume that $c(x)$ is a function that never vanishes. 
		Then  we have, 
		\[
		\displaystyle{\sum_{\substack{n\le x \\  a_n=0}}1}\leq c(x)^{-2}\left(E_2(x)-2c(x)E_1(x)\right). 
		\]
	\end{lemma}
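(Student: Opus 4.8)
The plan is to run the standard second-moment (variance) argument that goes back to Erd\H{o}s. The key observation is that if $a_n=0$ then $(a_n-c(x))^2=c(x)^2$, so that
\[
c(x)^2\,\sum_{\substack{n\le x\\ a_n=0}}1\ \le\ \sum_{n\le x}\bigl(a_n-c(x)\bigr)^2 .
\]
Hence it suffices to bound the right-hand side from above, and this is exactly what the two hypotheses provide once the square is expanded.

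Concretely, I would write
\[
\sum_{n\le x}\bigl(a_n-c(x)\bigr)^2=\sum_{n\le x}a_n^2-2c(x)\sum_{n\le x}a_n+c(x)^2\lfloor x\rfloor ,
\]
then bound $c(x)^2\lfloor x\rfloor\le c(x)^2 x$ (valid since $c(x)^2\ge 0$), and substitute $\sum_{n\le x}a_n=c(x)x+E_1(x)$ together with $\sum_{n\le x}a_n^2=c(x)^2x+E_2(x)$. The three occurrences of $c(x)^2x$ cancel, leaving
\[
\sum_{n\le x}\bigl(a_n-c(x)\bigr)^2\ \le\ E_2(x)-2c(x)E_1(x).
\]
Combining this with the displayed inequality above and dividing by $c(x)^2$, which is legitimate because $c(x)$ never vanishes, yields the asserted bound.

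There is essentially no obstacle here: the whole argument is a one-line expansion of a perfect square together with the elementary estimate $\lfloor x\rfloor\le x$, the only point to watch being that this replacement is in the favourable direction precisely because the coefficient $c(x)^2$ is non-negative. In the applications one simply takes $(a_n)$ to be the sequence $v(\ell,n)$ (or the analogous sequences in the later sections) and reads off $c(x)$, $E_1(x)$ and $E_2(x)$ from \lemref{sum_v} and \lemref{sum_v_2}.
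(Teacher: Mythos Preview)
Your proof is correct and follows exactly the same approach as the paper: the paper's proof consists solely of the observation $\sum_{n\le x}(a_n-c(x))^2\ge\sum_{n\le x,\,a_n=0}c(x)^2$ by non-negativity, and you have simply spelled out the routine expansion that the paper leaves implicit.
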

	\begin{proof}
		This is clear once we observe that 
		\[
		\sum_{n \leq x} (a_n -c(x))^2 \geq \sum_{\substack{n\le x \\  a_n=0}}c(x)^2,
		\]
		by non-negativity. 
	\end{proof}
	As mentioned in \rmkref{delta_positive}, we recall that for any prime $\ell$, the image of complex conjugation lies in $\mathscr{C}_\ell$ because the trace of this image is always zero. This shows that $\mathscr C_\ell \neq \phi$ and hence $\delta(\ell)$ is never zero. Now applying the above lemma with $a_n=v(\ell, n)$ in conjunction  with 
	\lemref{sum_v} and \lemref{sum_v_2} (with $c(x)=\delta(\ell)L_2(x)$), we obtain 
	\begin{equation}\label{vln=0}
		\sum_{\substack{n\leq x \\ v(\ell,n)=0}} 1 \ll 
		\frac{x L_3(x)}{\delta(\ell)L_2(x)}.
	\end{equation}
	Substituting the above bound in  \eqref{secondsum}, recalling that  $\delta(\ell) \sim {\ell}^{-2}$ and the size of the parameter $y$ given in  \eqref{y}, we obtain
	\be\label{eq11}
	S_{2}  \ll \frac{x}{{L_2(x)}^{6/7}}.
	\ee
	Finally, substituting estimates \eqref{eq112} and \eqref{eq11} in  \eqref{eq110} completes the proof.\\
	
	To prove part (b), we first note that  if $(d, (a_1(n),a_2(n)))=1$, then  $v(\ell,n)=0$ for all primes $\ell|d$. Thus
	\begin{align*}
		\sum_{\substack{n\leq x\\ (d, (a_1(n), a_2(n)))=1}}1\le 
		\sum_{\substack{n\le x\\ v(\ell ,n)=0 {\rm~for~all~}\ell|d}}1 
		= \sum_{\ell |d}\sum_{\substack{n\le x \\  v(\ell ,n)=0}}1. 	
	\end{align*}
	Now  \eqref{vln=0} yields the result.
	
		\section{Proof of Theorem \ref{sieve}}\label{proof-sieve}
	
	We shall denote $(a_1(p), a_2(p))$ by $a_p$. Our goal is to estimate the number of primes $p$ up to $x$ for which $a_p=1$. Motivated by the theory of sieves, we  first make
	a simple yet crucial observation:
	\[
C(x;f_1,f_2)=|\{p\leq x: a_p=1\}| \leq | \{p\leq x: (a_p, P(y))=1\}|,
	\]
	where $P(y)=\prod_{\ell <y}\ell$, $\ell$ running over primes, for some parameter $y$ to be chosen later, subject to the conditions $y<x$ and that $y$ goes to infinity along with $x$.  Our goal is to estimate the sum on the right accurately. This is usually done using sieves. However, since the density function $\delta$ is not known to multiplicative on the entire set of integers in the general case, we cannot apply standard results from Sieve Theory directly. Instead, we start from the scratch by expressing the coprimality condition by the
	M\"{o}bius function.
Thus we write,
	\begin{align*}
	\sum_{\substack{p\leq x\\(a_p, P(y))=1}} 1&= \sum_{p \leq x} \sum_{d|(a_p, P(y))}\mu(d)\\
	&= \sum_{d|P(y)}\mu(d)\sum_{\substack{p\leq x\\a_p\equiv 0 \modd}} 1.
	\end{align*}
	Now, under the assumption of GRH, we have, by \eqref{evl},
	\[
	\sum_{\substack{p\leq x\\a_p\equiv 0 \modd}} 1= \delta(d)\pi(x)+O\left(d^5x^{1/2}\log (dNx)\right).
	\]
Therefore,
\be\label{one}
\sum_{\substack{p\leq x\\(a_p, P(y))=1}} 1=\pi(x)\sum_{d|P(y)}\mu(d)\delta(d)+
O\left(x^{1/2} \sum_{d|P(y)}\mu^2(d)d^5 \log (dNx)\right).
\ee
We first treat the sum in the error term. First of all, for $d|P(y)$,
\[
\log d \leq \log P(y)=\sum_{\ell <y}\log \ell \ll y,
\]
by PNT. Therefore, 
\[
\sum_{d|P(y)}\mu^2(d)d^5 \log (dNx)\ll (y +\log x)\sum_{d|P(y)}\mu^2(d)d^5.
\]
Now,
\begin{align*}
\sum_{d|P(y)}\mu^2(d)d^5 &= \prod_{\ell <y}(1+\ell)^5\\
&\ll \prod_{\ell<y}{\ell}^5\\
&\ll\exp((5+\ve)y),
\end{align*}
for any fixed $\ve>0$,
again by PNT. Therefore,
\be\label{two}
x^{1/2} \sum_{d|P(y)}\mu^2(d)d^5 \log (dNx)=O\left(x^{1/2}\exp((5+\ve)y)(y+\log x)\right).
\ee

We now treat the sum in the main term. Note that 
\be\label{three}
\sum_{d|P(y)}\mu(d)\delta(d)= \alpha' -\sum_{d\nmid P(y)}\mu(d)\delta(d).
\ee
Now to handle this new sum, we need to overcome the problem of the lack of multiplicativity coming from the small primes.  We first recall the definition of $M=M(f_1, f_2)$ in the beginning of \S \ref{delta-asymp} and the bound $\delta(\ell)\leq 3/{\ell}^2$
for $\ell >M$ (see \eqref{deltabd}). We observe that every $d$ in the above sum can be factored uniquely as $d=d_1 d_2$, where
\[
d_1=\prod_{\substack{p|d\\ p\leq M}} p
\]
and $d_2=d/d_1$. We also make another observation that for any two positive integers $a$ and $b$, $\delta(ab)\leq \delta(a)$. This does not follow 
directly from the definition of $\delta$ but the observation is clear once we interpret $\delta$ as a density using Prop. \ref{asymptotic_pi}; namely,
\[
\delta(a)=\lim_{x \rightarrow \infty} \frac{\pi_{f_1, f_2} (x, a)}{\pi (x)},
\]
since, trivially, $\pi_{f_1, f_2} (x, ab)\leq \pi_{f_1, f_2} (x, a)$.  Using the above observations and recalling the standard notations $P^+ (n)$ and $P^-(n)$ for the largest and the smallest prime factor of a positive integer $n$, respectively, we write
\begin{align*}
|\sum_{d\nmid P(y)}\mu(d)\delta(d)|&\leq \sum_{d\nmid P(y)}\mu^2(d)\delta(d)\\
&=\sum_{P^+ (d)>y}\mu^2(d)\delta(d)\\
&=\sum_{P^+(d_1)\leq M}\sum_{\substack{P^-(d_2)> M\\P^{+}(d_2)>y}}\mu^2(d_1 d_2)\delta(d_1d_2)\\
&\leq \sum_{P^+(d_1)\leq M} \mu^2(d_1)\sum_{\substack{P^-(d_2)> M\\P^{+}(d_2)>y}}\mu^2( d_2)\frac{3^{\omega(d_2)}}{{d_2}^2}\\
&\leq 2^M \sum_{c>y}\frac{3^{\omega(c)}}{{c}^2}\\
&\ll y^{-1}(\log y)^2,\\
\end{align*}
by a well-known classical estimate and partial summation.
By this bound and \eqref{one}, \eqref{two}, and \eqref{three}, we finally obtain 
\[
\sum_{\substack{p\leq x\\(a_p, P(y))=1}} 1=\alpha'\pi(x)+O(y^{-1}(\log y)^2\pi(x))+O\left(x^{1/2}\exp((5+\ve)y)(y+\log x)\right).
\]
Now, if we choose $y=\frac{1}{12}\log x$ and $0<\ve<1/100$, we see that both the error terms are $o(\pi(x))$. 

	\section{Proofs of \thmref{omega} and \thmref{omega_u}}\label{S_omega}
	For this section, we set $z=x^{1/12-\eta}$ for some fixed real number $\eta \in (0, 1/100)$. We first prove  two lemmas.
	\begin{lemma}\label{pi_del}
		Under GRH, we have
		$$\sum_{\ell\leq z}|\pi^{*}_{f_1,f_2}(x,\ell)-\delta(\ell)\pi(x)|=o(\pi(x)).$$
	\end{lemma}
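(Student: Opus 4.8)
The plan is to apply part (b) of \propref{asymptotic_pi*} to each individual prime $\ell\le z$ and then sum the resulting error terms, exploiting the fact that $z=x^{1/12-\eta}$ is small enough to absorb the factor $\ell^5$ appearing in that error.

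First I would recall that, under GRH, \propref{asymptotic_pi*}(b) gives, for every integer $m\ge 1$ and every $x\ge 2$,
\[
\pi^{*}_{f_1,f_2}(x,m)=\delta(m)\pi(x)+O\!\left(m^5 x^{1/2}\log(mNx)\right)+O\!\left(x^{3/4}\right),
\]
with absolute implied constants. Hence, by the triangle inequality,
\[
\sum_{\ell\le z}\bigl|\pi^{*}_{f_1,f_2}(x,\ell)-\delta(\ell)\pi(x)\bigr|
\ll x^{1/2}\sum_{\ell\le z}\ell^5\log(\ell Nx)+\pi(z)\,x^{3/4}.
\]

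Then I would bound the two pieces crudely. Since $\ell\le z\le x$ for $x$ large, $\log(\ell Nx)\ll\log x$, and $\sum_{\ell\le z}\ell^5\le\sum_{n\le z}n^5\ll z^6$; therefore the first term is $\ll z^6 x^{1/2}\log x$, which with $z=x^{1/12-\eta}$ becomes $\ll x^{1-6\eta}\log x$. For the second term, $\pi(z)\le z\le x^{1/12}$, so it is $\ll x^{1/12+3/4}=x^{5/6}$. Since $1-6\eta<1$ (as $\eta>0$) and $5/6<1$, both expressions are $o(x/\log x)=o(\pi(x))$, which is the asserted bound.

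The computation is essentially bookkeeping; the only genuine point — and the reason the lemma is stated with the specific cutoff $\ell\le z=x^{1/12-\eta}$ rather than, say, $\ell\le\sqrt{x}$ — is that the error term in \propref{asymptotic_pi*}(b) grows like $\ell^5$, so summing it trivially over $\ell\le z$ costs a factor $z^6$, and $z^6 x^{1/2}$ stays below $x^{1-\delta}$ precisely when $z<x^{1/12}$. I do not expect any real obstacle beyond fixing the cutoff and checking the two exponents.
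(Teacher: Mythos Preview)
Your proof is correct and follows exactly the same approach as the paper: apply \propref{asymptotic_pi*}(b) term by term, sum the two error contributions over $\ell\le z$, and verify that with $z=x^{1/12-\eta}$ both are $o(\pi(x))$. You have simply supplied the explicit bounds $z^6 x^{1/2}\log x$ and $x^{5/6}$ that the paper leaves implicit.
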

	\begin{proof}
		From part (b) of  Prop. \ref{asymptotic_pi*} we obtain
		$$\sum_{\ell\leq z}|\pi^{*}_{f_1,f_2}(x,\ell )-\delta(\ell )\pi(x)|=O\Big(x^{\frac{1}{2}}\sum_{\ell \leq z}\ell ^5\log (\ell Nx)\Big)+O\Big(x^{\frac{3}{4}}\sum_{\ell \leq z}1\Big).
		$$
		Because of our choice of $z$,  both the error terms  on the right hand side are $o(\pi(x))$ and this completes the proof.
	\end{proof}
	
	\begin{lemma}\label{sum_pi_order}
		Under GRH, we have
		\begin{equation*}
			\sum_{\ell}\pi^{*}_{f_1,f_2}(x,\ell )\ll \frac{x}{\log x}.
		\end{equation*}
	\end{lemma}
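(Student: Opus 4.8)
The plan is to split the sum over primes $\ell$ into two ranges at the parameter $z = x^{1/12-\eta}$ and estimate each separately. For the tail $\ell > z$, the key observation is that if $\ell \mid (a_1(p), a_2(p))$ and $a_1(p) a_2(p) \neq 0$, then in particular $\ell \mid a_1(p)$ with $a_1(p) \neq 0$; by Deligne's bound $|a_1(p)| \le 2 p^{(k_1-1)/2}$, so for a fixed prime $p \le x$ the number of primes $\ell > z$ dividing $a_1(p)$ is $\ll \log|a_1(p)| / \log z \ll \log x / \log z \ll 1$, using $z = x^{1/12-\eta}$. Summing over $p \le x$ gives $\sum_{\ell > z} \pi^{*}_{f_1,f_2}(x,\ell) \ll \pi(x) \ll x/\log x$. (Alternatively, one exchanges the order of summation: $\sum_{\ell > z}\pi^*_{f_1,f_2}(x,\ell) = \sideset{}{'}\sum_{p \le x} |\{\ell > z : \ell \mid (a_1(p),a_2(p))\}|$ and bounds the inner count as above.)

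For the main range $\ell \le z$, I would use part (b) of Proposition \ref{asymptotic_pi*} to write
\[
\sum_{\ell \le z} \pi^{*}_{f_1,f_2}(x,\ell) = \pi(x)\sum_{\ell \le z}\delta(\ell) + O\Big(x^{1/2}\sum_{\ell \le z}\ell^5\log(\ell N x)\Big) + O\Big(x^{3/4}\sum_{\ell \le z}1\Big).
\]
By the choice $z = x^{1/12-\eta}$, both error terms are $o(\pi(x))$ exactly as in Lemma \ref{pi_del}. For the main term, Proposition \ref{asymptotic_delta} gives $\delta(\ell) \le 3/\ell^2$ for all $\ell > M$, so $\sum_{\ell} \delta(\ell)$ converges; in particular $\sum_{\ell \le z}\delta(\ell) = O(1)$, and hence $\pi(x)\sum_{\ell \le z}\delta(\ell) \ll \pi(x) \ll x/\log x$. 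Combining the two ranges yields $\sum_{\ell}\pi^{*}_{f_1,f_2}(x,\ell) \ll x/\log x$.

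The only genuinely delicate point is the tail estimate: one must make sure that the exponent in $z = x^{1/12-\eta}$ is small enough that $\log x/\log z$ is bounded, which is immediate here, and simultaneously large enough that the error terms from Proposition \ref{asymptotic_pi*} for $\ell \le z$ remain $o(\pi(x))$ — this is where the precise value $1/12$ is dictated by the $\ell^5 x^{1/2}$ shape of the GRH error term (one needs $5\cdot\frac{1}{12} + \frac12 < 1$). Everything else is routine: Deligne's bound, the elementary fact that an integer of size at most $x^{O(1)}$ has $O(1)$ prime factors exceeding $x^{1/12-\eta}$, and the convergence of $\sum_\ell \delta(\ell)$ already established in Proposition \ref{asymptotic_delta}.
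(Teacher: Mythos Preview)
Your proof is correct and follows essentially the same approach as the paper: split at $z=x^{1/12-\eta}$, handle $\ell\le z$ via Proposition~\ref{asymptotic_pi*} together with the convergence of $\sum_\ell\delta(\ell)$, and handle $\ell>z$ by swapping the order of summation and bounding the number of large prime divisors of $(a_1(p),a_2(p))$ via Deligne's bound. The only cosmetic difference is that the paper packages the $\ell\le z$ error estimate as an appeal to Lemma~\ref{pi_del}, whereas you reproduce that computation inline.
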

	
	\begin{proof}
		First, we write
		$$\sum_{\ell}\pi^{*}_{f_1,f_2}(x,\ell )=\sum_{\ell \leq z}\pi^{*}_{f_1,f_2}(x,\ell )+\sum_{ \ell>z}\pi^{*}_{f_1,f_2}(x,\ell ).$$
		Since
		$$\sum_{\ell \le z}\pi^{*}_{f_1,f_2}(x,\ell )\le \sum_{\ell \le z}|\pi^{*}_{f_1,f_2}(x,\ell )-\delta(\ell )\pi(x)|
		+\sum_{\ell \le z}\delta(\ell )\pi(x),
		$$
		applying \lemref{pi_del},  Prop. \ref{asymptotic_delta}, and PNT, we obtain the bound
		\begin{equation*}
			\sum_{\ell \le z}\pi^{*}_{f_1,f_2}(x,\ell )\ll \frac{x}{\log x}.
		\end{equation*}	
		Thus in order to complete the proof it suffices to show that
		$\displaystyle{\sum_{\ell>z}\pi^{*}_{f_1,f_2}(x,\ell )\ll \frac{x}{\log x}.}$
		Now
		$$\sum_{ \ell>z}\pi^{*}_{f_1,f_2}(x,\ell )
		%=\sum_{\substack{p\leq x\\ a_1(p)a_2(p)\neq 0}}~
		%\sum_{\substack{\ell>z \\ \ell |(a_1(p), a_2(p))}}1
		= \sideset{}{'}\sum_{p \leq x} \sum_{\substack{\ell>z  \\ \ell |(a_1(p), a_2(p))}}1,$$
		and using the fact that $(a_1(p), a_2(p))\ll x^{(k-1)/{2}}$ for $p \leq x$ we have
		\begin{equation}\label{nd}
			\sum_{\substack{\ell>z  \\ \ell |(a_1(p), a_2(p))}}1 \ll \frac{\log x}{\log z}=O(1),
		\end{equation}
		which yields the desired result.
	\end{proof}
	
	\subsection{Proof of \thmref{omega}}
	We observe that under GRH, the following bound holds:
	\begin{equation}\label{omegafirst}
		\sideset{}{'}\sum\limits_{p\le x} \omega((a_1(p),a_2(p)))\ll \frac{x}{\log x}.
	\end{equation}
	Indeed,
	\begin{align*}
		\sideset{}{'}\sum\limits_{p\le x} \omega((a_1(p),a_2(p))) &
		=\sideset{}{'}\sum\limits_{p\le x} \sum_{\substack{ \ell|(a_1 (p),a_2(p))}}1 
		= \sum_{\ell } \sideset{}{'}\sum\limits_{{\substack{p \le x \\ \ell|(a_1 (p),a_2(p))}}}1
		= \sum_{\ell } \pi_{f_1,f_2}^*(x,\ell).
	\end{align*}
	Now the estimate \eqref{omegafirst} is clear after  invoking \lemref{sum_pi_order}. \\
	By the elementary inequality $(a+b)^2\leq 2(a^2 +b^2)$ for any two real numbers $a$ and $b$, we can write
	\begin{align*}
		\sideset{}{'}\sum\limits_{p\le x} \omega^2((a_1(p),a_2(p)))\ll
		\sideset{}{'}\sum\limits_{p\le x} \left( \omega((a_1(p),a_2(p))) - \omega_{\sqrt z}((a_1(p),a_2(p))) \right)^2  + \sideset{}{'}\sum\limits_{p\le x} \omega_{\sqrt z}^2((a_1(p),a_2(p))).
	\end{align*}
	Now,
	$\omega((a_1(p),a_2(p))) - \omega_{\sqrt z}((a_1(p),a_2(p)))$ is the number of distinct prime divisors of $(a_1(p),a_2(p))$ lying between
	$\sqrt{z}$ and $2x^{(k-1)/2}$
	and hence from \eqref{nd} 
	$$\sideset{}{'}\sum\limits_{p\le x} \left( \omega((a_1(p),a_2(p))) - \omega_{\sqrt z}((a_1(p),a_2(p))) \right)^2\ll \frac{x}{\log x}.$$
	Thus to complete the proof it remains to show that 
	$$
	\sideset{}{'}\sum\limits_{p\le x}  \omega^2_{\sqrt z}((a_1(p),a_2(p))) \ll \frac{x}{\log x}
	$$
	and this follows from  part (b) of Thm. \ref{omega_u} which is proved in the next section.
	\subsection{Proof of \thmref{omega_u}}
	To prove part (a), we write
	\begin{align}\label{id_effective}
		\sideset{}{'}\sum\limits_{p\le x} \omega_u((a_1(p),a_2(p)))
		&= \sum_{\ell \le u} \pi_{f_1,f_2}^*(x,\ell) \notag\\
		&= \pi(x) \left(\sum_{\ell} \delta(\ell)- \sum_{\ell > u} \delta(\ell) \right)+ \sum_{\ell \le u} \left( \pi_{f_1,f_2}^*(x,\ell)-\delta(\ell)\pi(x)\right).
	\end{align}
	By Prop. \ref{asymptotic_delta}, we know that the series $\sum_{\ell} \delta(\ell)$ is convergent and we denote the sum by $c_1$. Obviously, $c_1>0$. Also, by Prop. \ref{asymptotic_delta} and partial summation, we have the bound
	$$\sum_{\ell > u} \delta(\ell) \ll \frac{1}{u}.$$
	Thus applying Prop. \ref{asymptotic_pi*} in  \eqref{id_effective} and using  PNT, we obtain
	\begin{align*}
		\sideset{}{'}\sum\limits_{p\le x} \omega_u((a_1(p),a_2(p)))
		&= c_1\pi(x) +O\left(\frac{x}{u \log x}\right) + O\left(x^{{1}/{2}}(\log x) \frac{u^6}{\log u}\right)+O\left(x^{{3}/{4}}\frac{u}{\log u}\right).
	\end{align*}
	This completes the proof because of our choice of $u$.
	The proof of part (b) is omitted as one just needs to follow the same idea that has been used for proving part (a). 
	%We only point out that in this case, we have
	%$$
	%c_2=\sum_{\substack{\ell_1,\ell_2\\ \ell_1\neq \ell_2}} \delta(\ell_1\ell_2)+ \sum_{\ell}\delta(\ell).
	%$$

	\begin{acknowledgements} 
		The authors thank E. Ghate,  V. M. Patankar,  C. S. Rajan and J. Sengupta for helpful discussions. The authors thank the anonymous referee for a careful reading of the manuscript and they are grateful for
		several suggestions from the referee that  led to a substantial improvement in the quality of this article. This project was initiated when
		the first named author visited the Tata Institute of Fundamental Research, Mumbai in January, 2019 where  the second and the third authors were postdoctoral fellows at that time. The authors  thank 
		the institute for providing excellent working condition. 	 The open-source mathematics software SAGE (www.sagemath.org) has been used 
		for numerical computations in this work.\\
		The research of the second author was supported by the grant no. 692854 provided by the European Research Council (ERC) while the third author was supported by Israeli Science Foundation grant  1400/19. 
	\end{acknowledgements}

\end{document}